\documentclass[oneside,english, 12pt]{amsart}

\usepackage{txfonts}
\usepackage{amsfonts}
\usepackage{lmodern}

\usepackage[T1]{fontenc}
\usepackage[latin9]{inputenc}
\usepackage{geometry}
\geometry{verbose,tmargin=1in,bmargin=1in,lmargin=1in,rmargin=1in}
\usepackage{amsthm}
\usepackage{amssymb}
\usepackage{amssymb,amsmath,amsthm,tikz}
\usetikzlibrary{matrix,arrows,decorations.pathmorphing}
\usepackage{enumerate}
\usepackage{url}

\usepackage[bookmarks=true, bookmarksopen=true,%
bookmarksdepth=3,bookmarksopenlevel=2,%
colorlinks=true,%
linkcolor=blue,%
citecolor=blue,%
filecolor=blue,%
menucolor=blue,%
urlcolor=]{hyperref}

\title{Simple sheaves for knot conormals}
\author{Honghao Gao}

\address{Department of Mathematics \\ Michigan State University \\ 619 Red Cedar Road \\ East Lansing \\ MI 48824 \\ USA}
\email{gaohongh@msu.edu}

\makeatletter

\numberwithin{equation}{section}
\numberwithin{figure}{section}

\theoremstyle{plain}
\newtheorem{thm}{Theorem}[section]
\newtheorem{lem}[thm]{Lemma}
\newtheorem{cor}[thm]{Corollary}
\newtheorem{prop}[thm]{Proposition}

\theoremstyle{definition}
\newtheorem{defn}[thm]{Definition}
\newtheorem{eg}[thm]{Example}

\newtheorem{notation}[thm]{Notation}

\theoremstyle{remark}
\newtheorem{rmk}[thm]{Remark}

\newcommand{\bbC}{{\mathbb{C}}}

\newcommand{\bbR}{{\mathbb{R}}}

\newcommand{\bbZ}{{\mathbb{Z}}}

\newcommand{\cA}{{\mathcal{A}}}

\newcommand{\cE}{{\mathcal{E}}}
\newcommand{\cF}{{\mathcal{F}}}
\newcommand{\cG}{{\mathcal{G}}}
\newcommand{\cH}{{\mathcal{H}}}

\newcommand{\cK}{{\mathcal{K}}}
\newcommand{\cL}{{\mathcal{L}}}
\newcommand{\cM}{{\mathcal{M}}}

\newcommand{\cP}{{\mathcal{P}}}

\newcommand{\cS}{{\mathcal{S}}}

\newcommand{\del}{{\partial}}

\newcommand{\la}{{\langle}}
\newcommand{\ra}{{\rangle}}

\tikzset{node distance=1.5cm, auto}

\usepackage{babel}

\begin{document}
\maketitle

\begin{abstract}
We classify simple sheaves microsupported along the conormal bundle of a knot. We also establish a correspondence between simple sheaves up to local systems and augmentations, explaining the underlying reason why knot contact homology representations detect augmentations.
\end{abstract}


\section{Introduction}

Given a knot in Euclidean three space or the three dimensional sphere, its conormal bundle is a conic Lagrangian subspace in the cotangent bundle of the ambient space, which is canonically a symplectic manifold. Using microlocal sheaf theory, one can study the subcategory of sheaves in the ambient space whose singular support is contained in the conormal bundle of the knot. Following a result of Guillermou-Kashiwara-Schapira \cite{GKS}, the dg derived category of such sheaves is a homogeneous Hamiltonian isotopy invariant of the knot conormal, and hence an isotopy invariant of the knot -- a knot invariant in short. Our first result studies a variant version, the category of simple abelian sheaves.

\begin{thm}[Theorem \ref{Sheafclassify}]\label{MainTheorem1}
For $X = \bbR^3$ or $S^3$, we classify the objects in $Mod^s_{\Lambda_K}(X)$, the simple abelian sheaves microsupported along the conormal bundle $\Lambda_K$ of the knot $K\subset X$.
\end{thm}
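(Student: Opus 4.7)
The plan is to reduce the classification to a local normal form near smooth points of $K$, and then reassemble the local data using the topology of the pair $(X,K)$. Since $K$ is smooth, $\Lambda_K = T^*_K X$ is a smooth conic Lagrangian in $T^*X$, so every point of $\Lambda_K \setminus 0_X$ is regular and the Kashiwara--Schapira theory of simple sheaves applies uniformly on all of $\Lambda_K$.

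First I would establish a local normal form. Around any point $p \in K$, pick a tubular neighborhood $U \cong D^2 \times I$ with $K \cap U = \{0\} \times I$, so that $\Lambda_K \cap T^*U = T^*_{\{0\}} D^2 \times 0_I$. Microlocal propagation in the $I$-factor forces any sheaf on $U$ microsupported in $\Lambda_K \cup 0_U$ to be pulled back from a sheaf on $D^2$ microsupported in $T^*_{\{0\}} D^2 \cup 0_{D^2}$. Such a sheaf is locally constant on $D^2 \setminus \{0\}$, and a direct microstalk computation identifies the simple ones as being of two types: a closed model $i_* \mathbf{k}[d]$ for the inclusion $i : \{0\} \hookrightarrow D^2$, or an open model $j_! \cL[d-1]$ for $j : D^2 \setminus \{0\} \hookrightarrow D^2$ and a rank-one local system $\cL$ on $D^2 \setminus \{0\}$. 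The two types are distinguished by a shift of $1$ in the microlocal Maslov datum.

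Globalizing along $K$, write $i : K \hookrightarrow X$ and $j : X \setminus K \hookrightarrow X$. A simple sheaf of locally closed type glues to $i_* \cL[d]$ for a rank-one local system $\cL$ on $K \cong S^1$, parameterized by the longitude monodromy $\ell \in \mathbf{k}^\times$. A simple sheaf of locally open type glues to $j_! \cL'[d]$ for a rank-one local system $\cL'$ on $X \setminus K$, parameterized by the meridian monodromy $m \in \mathbf{k}^\times$, since $H_1(X \setminus K; \bbZ) \cong \bbZ$ for both $X = \bbR^3$ and $X = S^3$. In the mixed case, any $F$ fits into the distinguished triangle $j_! j^* F \to F \to i_* i^* F \xrightarrow{+1}$, and the extension class lives in $\mathrm{Ext}^1(i_* \cL, j_! \cL'[1])$; the microstalk simplicity condition along $\Lambda_K$ selects precisely the admissible extensions.

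The main obstacle I anticipate is the last step: determining which extension classes yield simple sheaves and verifying that the resulting list is complete and nonredundant. I would compute $\mathrm{Ext}^*(i_* \cL, j_! \cL'[k])$ using an adjunction long exact sequence together with the knot-complement cohomology, then apply the microstalk formula along $\Lambda_K$ to see which extensions produce $\mathbf{k}[d]$ microstalks everywhere on $\Lambda_K \setminus 0_X$. I expect the final enumeration to display a combinatorial structure reflecting the meridian--longitude decomposition of $\pi_1(X \setminus K)$, which would lay the ground for the subsequent correspondence with augmentations announced in the abstract.
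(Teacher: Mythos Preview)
Your proposal has a genuine gap at the globalization step. You assert that a simple sheaf of ``locally open type'' glues to $j_!\cL'[d]$ for a \emph{rank-one} local system $\cL'$ on $X\setminus K$, invoking $H_1(X\setminus K;\bbZ)\cong\bbZ$. But $\pi_K=\pi_1(X\setminus K)$ is nonabelian, and the restriction $j^{-1}\cF$ can have arbitrarily large rank. Simpleness constrains only the cone of the generization map $T\colon W\to V$ (stalk on $K$ to stalk off $K$), not the rank of $V$ itself. For every $n\geq 1$ there are simple sheaves $j_*\cE$ with $\cE$ an $n$-dimensional representation of $\pi_K$ in which a meridian acts as the identity on a codimension-one subspace; here $\dim V=n$, $\dim W=n-1$, and $cone(T)\cong k$. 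Your local normal form already misses this: on $D^2$ a simple abelian sheaf is in general only of the form $k_{D^2}^{\,a}\oplus(\text{indecomposable piece})$, and when you glue along $K$ the ``constant'' summand $k_{D^2}^{\,a}$ can twist together with the rank-one piece via the nonabelian monodromy of $\pi_K$. The unipotent case $\rho(m)=I_n+E_{12}$ is missed outright, since there the indecomposable local piece already has rank $2$.

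The paper avoids this by never attempting a local-to-global argument. Lemma~\ref{SheafRep} converts a sheaf in $Mod_{\Lambda_K}(X)$ directly into the global data $(\rho,V)$, $(\rho',W)$, $T$, and the proof of Theorem~\ref{Sheafclassify} is a case analysis on $T$ (injective with rank-$1$ cokernel versus surjective with rank-$1$ kernel) followed by a normal-form reduction for the meridian matrix (Lemma~\ref{StdMatrix}). The injective, non-scalar case is exactly where the KCH and unipotent KCH representations appear, and these are the sheaves that later match the augmentations with $\epsilon([e])\neq 0$ or with $\epsilon([e])=0$ but some $\epsilon([\gamma])\neq 0$. Your rank-one ansatz would recover only the abelian representations of $\pi_K$, hence only augmentations on the line $\lambda=1$. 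A smaller point: the theorem concerns $Mod^s_{\Lambda_K}(X)$, i.e.\ abelian sheaves concentrated in degree~$0$, so the shifts $[d]$ in your normal forms are out of place here; the derived version is treated separately in Section~\ref{Sec:DerivedSheaves}.
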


The microlocal sheaf theory we use in this paper mostly follows from the founding work of Kashiwara-Schapira \cite{KS}. The term ``microlocal'' refers to studying properties in the cotangent bundle, where the symplectic and contact geometry come in. The singular support, a key concept in microlocal sheaf theory, respects the dilation action along the fibre. In our setting, it is the knot conormal. The cotangent bundle removing the zero section can be dehomogenized to a contact manifold, and consequently the knot conormal becomes a Legendrian. The microlocal sheaf category is an invariant for the Legendrian knot conormal.

The knot contact homology is another invariant of the Legendrian knot conormal, which uses the theory of the $J$-holomorphic curves. Transforming the cosphere bundle into the one-jet space of sphere and identify the Legendrian knot conormal as a submanifold in the jet space, one can define a differential graded algebra which is as well a Legendrian isotopy invariant. The combinatorial version was first formulated by Ng \cite{Ng1, Ng2, Ng3}. The Floer theoretical version was introduced by Ekholm-Etnyre-Sullivan \cite{EES1, EES2, EES3}. These two versions are proven to be equivalent later by the four authors \cite{EENS1, EENS2}.

Augmentations, which originated from linearizing the dga to obtain the linearized contact homology, turn out to be more computable invariants of the knot. An augmentation of a dga is a morphism to a trivial dga. The definition is algebraic in general and we apply it to the Legendrian dga. When the Legendrian emerges from the conormal bundle of the knot, it is expected that some contact topological properties can be captured by the topology of the base ambient manifold. It was first formulated by Ng \cite{Ng3}, and later proven by Cornwell \cite{Cor13a, Cor13b}, that the KCH representation -- a type of the representation of the knot group -- detects a subset of augmentations.

We hope to use the sheaf theory to unwrap the somewhat mysteriously defined KCH representation and explain the reason why these representations detect augmentations. To each simple sheaf, we are able to define an associated augmentation (see Theorem \ref{sheaftoaug}). Further we show
\begin{thm}[Theorem \ref{CornNgfactor}]The map from KCH representations to augmentations studied by Ng and Cornwell factors through the following diagram.
$$\{\textrm{KCH Representations}\} \hookrightarrow \{\textrm{Simple abelian sheaves}\} \twoheadrightarrow \{\textrm{Augmentations}\}.$$

Moreover, there is a bijection between simple sheaves up to local systems and augmentations. The correspondence is summarized in the following table:
\smallskip
\begin{center}
\begin{tabular}{| r | l |}
\hline
\rule{0pt}{2.3ex}
simple sheaves up to local systems & augmentations $\epsilon$ \\[0.045cm]
\hline
\rule{0pt}{2.3ex}
irreducible KCH representations &  $\epsilon([e])\neq 0$ \\
\;\, irreducible unipotent KCH representations & $\epsilon([e])= 0$ but $\epsilon([\gamma])\neq 0$ for some $\gamma\in \pi_K$ \;\\
rank $1$ local systems on the knot & $\epsilon([\gamma])= 0$ for all $\gamma \in \pi_K$ \\[0.045cm]
\hline
\end{tabular}
\end{center}
Here $\pi_K$ is the fundamental group of the knot complement, and $e\in \pi_K$ is the identity.
\end{thm}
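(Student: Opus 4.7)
The plan is to use the classification of simple sheaves from Theorem \ref{MainTheorem1} as the organizing principle, and to put KCH representations, simple sheaves, and augmentations on a common combinatorial footing so that both the factorization and the bijection reduce to comparing discrete data. The right-hand surjection $\{\text{Simple sheaves}\} \twoheadrightarrow \{\text{Augmentations}\}$ is already constructed in Theorem \ref{sheaftoaug}, so the burden lies on the left-hand arrow and on computing the fibres of the composition.

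To build the arrow $\{\text{KCH Representations}\} \hookrightarrow \{\text{Simple sheaves}\}$, I would attach to each KCH representation a canonical simple sheaf whose underlying local system on $X\setminus K$ is the one coming from the representation, and whose extension across $K$ is pinned down by the simplicity constraint in the classification. Injectivity is then checked by recovering the representation from the sheaf as the monodromy of its restriction to $X\setminus K$. For the factorization, I would compute the augmentation attached to this sheaf by Theorem \ref{sheaftoaug} in terms of braid or front-projection data, and match it Reeb chord by Reeb chord with the augmentation Ng and Cornwell extract from the representation; the comparison is essentially a linear-algebra identification between meridian matrices on the KCH side and microstalk transition maps on the sheaf side.

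For the bijection up to local systems, I would analyse the fibres of the surjection via Theorem \ref{MainTheorem1}: two simple sheaves produce the same augmentation precisely when their combinatorial classifying data coincide, which in turn happens exactly when they differ by twisting by a rank one local system supported near $K$. To obtain the trichotomy, evaluate $\epsilon$ on the meridian class $[e]$. When $\epsilon([e])\neq 0$, the simplicity constraint around a meridian disk forces a nontrivial meridian monodromy, which upgrades to an irreducible KCH representation. When $\epsilon([e])=0$ but $\epsilon([\gamma])\neq 0$ for some $\gamma$, the meridian monodromy is unipotent while the sheaf is still globally nontrivial on $X\setminus K$, matching the irreducible unipotent KCH representations. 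When $\epsilon$ vanishes on every class, the sheaf has no monodromy obstruction off $K$ and the classification collapses it to a rank one local system supported on the knot.

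The main obstacle will be the Reeb-chord-by-Reeb-chord comparison between the sheaf-to-augmentation map of Theorem \ref{sheaftoaug} and Ng-Cornwell's explicit construction, since the two are defined in quite different languages (microlocal restriction versus DGA linearisation), and sign and normalisation conventions have to be matched with care. A secondary subtlety is the unipotent middle row of the table: there the simple sheaf is a non-split extension rather than a twisted direct sum, so the classifying data must be tracked through an extension class rather than read off as monodromy, and one must verify that the resulting augmentation indeed lands in the prescribed stratum and not in one of the neighbouring strata.
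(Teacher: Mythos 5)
Your high-level skeleton matches the paper's: the surjection is Theorem \ref{sheaftoaug}, the injection comes from the classification (a KCH local system $\cE$ gives the simple sheaf $j_*\cE$ by Lemma \ref{pushforwardKCH}), and the trichotomy is organized by $\epsilon([e])$ and the vanishing of $\epsilon([\gamma])$. But there are two genuine gaps. First, you never construct the inverse direction, from an augmentation back to a sheaf, and this is where most of the actual work lies. Surjectivity onto $\cA ug$ and the claimed bijection $\cM \cong \cA ug$ require lifting an augmentation to a representation: for $\epsilon([e])\neq 0$ this is Cornwell's theorem, but for the middle stratum $\epsilon([e])=0$, $\epsilon([\gamma])\neq 0$ the paper has to build the representation explicitly via the matrix $R_{ij}=\epsilon([g_ig_j^{-1}])$ and the action $\rho(\gamma)R_j = \epsilon([g_\alpha\gamma g_j^{-1}])$, prove it is an irreducible unipotent KCH representation (Proposition \ref{IrrUniKCHLift}), prove it reproduces $\epsilon$ (Proposition \ref{UniKCHLiftMatch}, a nontrivial induction on word length in the meridian generators), and prove uniqueness (Proposition \ref{uniqueness}). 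Your phrase ``upgrades to an irreducible KCH representation'' assumes exactly this lifting; without it the fibres of $\cS\twoheadrightarrow\cA ug$ cannot be computed. Relatedly, your description of when two simple sheaves give the same augmentation --- ``differ by twisting by a rank one local system supported near $K$'' --- is not the correct equivalence: the relation defining $\cM$ is generated by short exact sequences with local systems on all of $X$, and the collapse of a general KCH representation to its irreducible meridian subrepresentation (Lemmas \ref{quottrivial1}, \ref{quottrivial2}) is what makes the fibres work out.

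Second, the ``Reeb-chord-by-Reeb-chord'' comparison you flag as the main obstacle is not how the paper proceeds, and pursuing it would be both unnecessary and much harder. Augmentations here are homomorphisms out of the framed cord algebra $\cP_K$, whose generators are already elements of $\pi_K$; both $\epsilon_\cF$ and the Ng--Cornwell map are defined directly on these generators, so the comparison (Proposition \ref{factorthrough}) is a short trace identity, $\epsilon_\cF([\gamma]) = tr\bigl((Id_V-\rho(m))\rho(\gamma)\bigr) = (1-\mu_0)\rho(\gamma)_{11}$, with no front projections or DGA linearization involved. A smaller inaccuracy: in the unipotent stratum the sheaf is still $j_*\cE_u$ for a local system $\cE_u$ on the complement, not a qualitatively different kind of non-split extension; what changes is only that the meridian monodromy is $I_n+E_{12}$ rather than diagonalizable, which is why the paper needs the separate analysis of unipotent KCH representations rather than a new extension-class bookkeeping.
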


For a concise presentation, we introduce the notion of the unipotent KCH representation (in Section \ref{Sec:UniKCH}) and study its connection to augmentations (in Section \ref{Sec:UniKCHAug}).

The correspondence makes better sense if we restrict our attention to simple sheaves up to local systems. With this consideration, we are able to describe microlocally simple sheaves in the (dg) derived category of sheaves (see Proposition \ref{DerivedSheafloc}), which is the category studied by Guillermou-Kashiwara-Schapira. 

It becomes evident from the table that KCH representations consist of a subset of simple sheaves and therefore detect some augmentations. Another geometric interpretation can be found in the work of Aganagic-Ekholm-Ng-Vafa \cite{AENV} or the work of Cieliebak-Ekholm-Latschev-Ng \cite{CELN}. Briefly speaking, some of the $J$-holomorphic curves can be stretched close to the zero section of the ambient three dimensional sphere, whose boundary data are recorded by the knot group.

\smallskip
We continue with explanations on the overall theory.

It is no coincidence that simple sheaves are connected to augmentations. Augmentation have a functorial nature. Though defined algebraically, augmentations sometimes have geometric counterparts being exact Lagrangian fillings, with heuristics from the symplectic field theory \cite{El, EGH}. It is proven that an exact Lagrangian cobordism between two Legendrian knots induces a morphism of the associated dgas \cite{EHK}, which further induces a map between the sets of augmentations, via pullback.

Even better, the set of augmentations admits the structure of an $A_\infty$-category, which is in some sense a perturbed dg category with higher morphisms. We have to remind the reader that there can be more than one of such categorical structures \cite{NRSSZ, BC}. In a Fukaya-categorial point of view, augmentations arising from exact Lagrangian fillings can be regarded as objects in the infinitesimal Fukaya category, and their hom spaces inherit the $A_\infty$-structure from the Fukaya category, which depend on a choice of the perturbation data \cite{NRSSZ}.

The Nadler-Zaslow correspondence models the Lagrangian branes in the Fukaya category by microlocal sheaves \cite{Na, NZ}. In the case of Legendrian knots in the Euclidean threefold with the standard contact structure, it was first conjectured in \cite{STZ}, and later proven in \cite{NRSSZ}, that the counterparts of the augmentations are microlocally simple sheaves. Heuristically, augmentations are rank one representations of the Legendrian dga, which correspond to simple sheaves in the sheaf world. Yet in higher dimensions, such statements have not been established.

Hopefully we have explained why the sheaf theory is a tool to study knots. In fact, it is a powerful tool. It is proven by Shende \cite{Sh2}, using sheaf theory, and Ekholm-Ng-Shende \cite{EkNgSh}, using Floer homology, that knot conormals give complete knot invariants. In this paper, we restrict our attention to simple sheaves and their connections to other knot invariants -- augmentations and KCH representations. As a consequence of the theorem, we exhibit at the level of objects the correspondence between augmentations and sheaves.

There is a subtlety on the geometric set up. The ambient space where the Nadler-Zaslow interpretation works is different from the ambient space we consider in this paper, especially Theorem \ref{MainTheorem1}. The underlining geometric transform admits a sheaf quantization \cite{Ga2}. More explanations on the relations among these works can be found in \cite{Ga1}.

We also mention the work of Rutherford and Sullivan \cite{RS1, RS2, RS3} which localizes the dga of a general Legendrian surface. This work potentially establishes the foundation for studying the correspondence for an arbitrary Legendrian surface. However, we take a different approach in this paper.

\smallskip
The organization of the paper is as follows.

Section \ref{SectionKnotgroup} introduces topological concepts facilitating the presentation of the classification theorem. In Section \ref{Sec:Knotgroup} -- \ref{Sec:KCHRep}, we review the knot group and the KCH representation in literature. In Section \ref{Sec:UniKCH}, we define the unipotent KCH representation.

Section \ref{SectionSheaves} focuses on microlocal sheaves. After a quick introduction in Sections \ref{Sec:Ssupp} -- \ref{Sec:Simple}, we classify the simple abelian sheaves microsupported along the knot conormal in Section \ref{Sec:Classification}. In Sections \ref{Sec:Moduli} -- \ref{Sec:DerivedSheaves}, we study the moduli set of sheaves up to local systems, in both the abelian and derived settings.

Section \ref{SectionAugs} establishes the correspondence between simple sheaves and augmentations. In Section \ref{Sec:Augs}, we review the definition of the augmentation, and then define a map which sends a simple sheaf to an augmentation. In Section \ref{Sec:KCHAug}, we   show that the map from KCH representations to augmentations is compatible with our earlier definition, when the sheaf emerges from a KCH representation.  In Section \ref{Sec:UniKCHAug}, we study thoroughly the interplay between the unipotent KCH representations and augmentations. In Section \ref{Sec:SheafAug}, we establish the sheaf-augmentation correspondence, which is the second main theorem of the paper. The final Section \ref{Sec:AugPoly} is an application on augmentation polynomials.

\begin{notation}
Throughout the paper, we fix the following notations.
\begin{itemize}
\item
Let $X = S^3$ or $\bbR^3$. 
\item
Let $K\subset X$ be an oriented knot. We do not discuss links.
\item
Let $i:K \rightarrow X$ be the closed embedding of the knot. Let $j: X\setminus K\rightarrow X$ be the open embedding of the knot complement.

\item
Let $n(K)$ be a small tubular neighborhood of $K$. Its boundary $\del n(K)$ is a torus.

\item
Fix a ground field $k$. It is the field over which the representations, the sheaves, and the augmentations are defined, (but not the dga).
\end{itemize}
\end{notation}

\smallskip
\noindent\textbf{Acknowledgements.} We would like to thank Eric Zaslow for initiating the problem and advising. We thank St\'ephane Guillermou, as well as the referee, for important comments. We also thank Xin Jin, Lenhard Ng, Dmitry Tamarkin for helpful discussions. The paper is based on and extends the results in the author's PhD dissertation at Northwestern University. This work is partially supported by the ANR projection ANR-15-CE40-0007 ``MICROLOCAL''.

\section{Knot group and its representations}\label{SectionKnotgroup}

\subsection{Knot group}\label{Sec:Knotgroup}

The \textit{knot group} $\pi_K: =\pi_1(X\setminus K)$ is the fundamental group of the knot complement. The group is the same for both $X = S^3$ and $X = \bbR^3$. The knot group is a knot invariant. A \textit{meridian} is the boundary of an oriented disk which intersects transversely with the knot $K$ at a single point. A knot group $\pi_K$ has the following properties:
\begin{enumerate}
\item
$\pi_K$ is finitely generated and finitely presented; 
\item
$\pi_K$ can be generated by the meridians of $K$;
\item
any two meridians are conjugate to each other in $\pi_K$.
\end{enumerate}
The abelianization of $\pi_K$, or $H_1(X\setminus K)$, is isomorphic to $\bbZ$. A generator is represented by the class of any meridian.

A \textit{Seifert surface} $S\subset X$ of an oriented knot $K$ is an oriented surface whose boundary is $K$. Every knot $K$ admits a Seifert surface. A Seifert surface is not unique, but its relative homology class in $H_2(X,K)$ is unique. Following the long exact sequence of the relative pair $(X,K)$, we have an exact sequence
$$0\rightarrow H_2(X,K) \xrightarrow{\del} H_1(K)\rightarrow 0.$$
Clearly $H_1(K) \cong \bbZ$. The relative class of a Seifert surface is given by the preimage of $[K]$ under the boundary map $\del$.

The tubular neighborhood $n(K)$ has a torus boundary. A \textit{longitude} $\ell$ is the intersection of $\del n(K)$ with a Seifert surface $S$. It inherits a natural orientation from $K$. Since $[S]$ is unique, the homology class of $[\ell]$ in $H_1(T)$ is unique, and in this sense, the longitude is also unique.

The fundamental group $\pi_1(T)$ is abelian and isomorphic to $\bbZ\times \bbZ$. The closed embedding of $T$ into $X\setminus K$ induces a map $\pi_1(T)\rightarrow \pi_K$. The torus singles out a preferred meridian. The longitude commutes with the preferred meridians in $\pi_K$. The longitude is contractible only when $K$ is the unknot. Also note that every representation of $\pi_K$ induces a representation of $\pi_1(T)$ by composition.

\subsection{KCH representation}\label{Sec:KCHRep}

We first review the definition of the KCH representation \cite{Ng3, Cor13a, Cor13b}. The name ``KCH'' is an abbreviation of the ``knot contact homology''. We postpone to explain the relation between these representations and the knot contact homology after we have introduced augmentations.

\begin{defn}
Suppose $V$ is a vector space. Let $m\in\pi_K$ be a fixed meridian. A representation $\rho: \pi_K \rightarrow GL(V)$ is a \textit{KCH representation} if $\rho(m)$ is diagonalizable, and acts on $V$ as identity on a codimension $1$ subspace. In particular, the codimension constraint requires $\rho(m)\neq \textrm{id}_V$, namely $\rho(m)$ is not the identity on the complement of that codimension $1$ subspace.

A KCH representation is \textit{irreducible} if it is irreducible as a representation.
\end{defn}

\begin{rmk}\label{KCHRmk}

Because all meridians are conjugate to each other in $\pi_K$, for any meridian $m$,  the $\rho(m)$-action on $V$ has an invariant subspace of codimension $1$.

Since the preferred meridian $m$ and the longitude $\ell$ commute, their action matrices can be simultaneously diagonalized up to Jordan blocks. Therefore, there is a basis of $V$ under which we have,
$$\rho(m) = \begin{pmatrix} \mu_0 & \\ & I_{n-1} \end{pmatrix},
\quad
\rho(\ell) = \begin{pmatrix} \lambda_0 & \\ &\ast_{n-1} \end{pmatrix}.
$$
where $\mu_0 \neq 1$, $n =\dim V$, and $\ast_{n-1}$ is a square matrix of size $n-1$. An eigenvector of the eigenvalue $\mu_0$ in $\rho(m)$ is also an eigenvector of $\rho(\ell)$, corresponding to the eigenvalue $\lambda_0$. We do not impose any constraints on $\lambda_0$, but it is non-zero by construction.
\end{rmk}

Next we show that a KCH representation is always an extension between an irreducible KCH representation and a trivial representation.

Let $\{m_i\}_{i\in I}$ be a finite set of meridian generators of $\pi_K$. Let $(\rho,V)$ be a KCH representation, and suppose $v_i$ is a distinguished eigenvector of $\rho(m_i)$, i.e. $\rho(m_i)\,v_i = \mu_0 \,v_i$. Define the \textit{meridian subspace} $V_0\subset V$ by 
\begin{equation}\label{Merdiansubspace1}
V_0:= \textrm{Span}_k\{v_I\}.
\end{equation}
The meridian subspace has the following properties:
\begin{enumerate}
\item
$\textrm{Span}_k\{v_I\}$ is $\pi_K$ invariant, i.e. $\textrm{Span}_k\{v_I\} = \textrm{Span}_{k[\pi_K]}\{v_I\}$, (\cite[Lemma 3.10]{Cor13b});
\item
$\textrm{Span}_{k[\pi_K]}\{v_I\} = \textrm{Span}_{k[\pi_K]}\{v_i\}$ for any $i\in I$, (if $m_j = g^{-1}m_ig$, then $v_i= \rho(g)v_j$).
\end{enumerate}
It shows that $V_0$ is a sub-representation by (1), and is irreducible by (2).

\begin{lem}\label{quottrivial1}
Suppose $\rho:\pi_K\rightarrow GL(V)$ is a KCH representation, then the quotient representation $\bar{\rho}: \pi_K\rightarrow GL(V/V_0)$ is trivial.
\end{lem}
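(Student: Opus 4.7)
The plan is to show that each meridian generator $m_i$ acts trivially on $V/V_0$; since the meridians generate $\pi_K$, this forces $\bar{\rho}$ to be trivial on the whole knot group.

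First I would unpack the hypothesis that $\rho$ is a KCH representation at each generator $m_i$: by definition, $\rho(m_i)$ is diagonalizable with identity action on a codimension-$1$ subspace $W_i\subset V$, and with eigenvalue $\mu_0\neq 1$ on the complementary line $\textrm{Span}_k(v_i)$. (The eigenvalue $\mu_0$ is the same across all $i$ because the meridians are conjugate in $\pi_K$, and we have already chosen the $v_i$ as the distinguished eigenvectors entering the definition of $V_0$.) Thus $V = \textrm{Span}_k(v_i)\oplus W_i$ and every $v\in V$ decomposes uniquely as $v = c\,v_i + w$ with $w\in W_i$.

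Next I would compute $\rho(m_i)v - v = c(\mu_0 - 1)\,v_i$, which lies in $\textrm{Span}_k(v_i)\subset V_0$. Hence for any $v\in V$ we have $\rho(m_i)v \equiv v \pmod{V_0}$, i.e.\ $\bar{\rho}(m_i) = \mathrm{id}_{V/V_0}$. Since $\pi_K$ is generated by the meridians $\{m_i\}_{i\in I}$ (property (2) of the knot group recalled in Section \ref{Sec:Knotgroup}), it follows at once that $\bar{\rho}(g) = \mathrm{id}_{V/V_0}$ for every $g\in\pi_K$, so $\bar{\rho}$ is the trivial representation.

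I do not anticipate a genuine obstacle here: the whole argument is a direct consequence of the eigenvalue structure of $\rho(m_i)$ combined with the fact that $v_i \in V_0$. The only minor point worth checking carefully is that the quotient $\bar\rho$ is actually well-defined, which is precisely the content of property (1) of $V_0$ (invariance under $\pi_K$) established just before the lemma; so the argument indeed reduces to the one-line computation above.
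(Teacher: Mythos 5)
Your proof is correct and follows essentially the same idea as the paper's: each meridian generator moves a vector only in the direction of its distinguished eigenvector $v_i$, which lies in $V_0$, so the induced action on $V/V_0$ is the identity. The only cosmetic difference is that you verify this for every generator directly, whereas the paper first reduces to a single generator using the fact that all meridians are conjugate (conjugates of the identity being the identity).
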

\begin{proof}
To prove the quotient representation is trivial, it suffices to show that each generator acts on the quotient vector space as identity. Because the generators are conjugate to each other in the knot group, it suffices to prove for one generator. Let's consider $\rho(m_1)$. We can find a basis of $V$ including $v_1$ such that $\rho(m_1)$ acting on all other basis vectors as identity. Since $v_1\in V_0$, we have $\bar{\rho}(m_1) = \textrm{id}_{V/V_0}$. The proof is complete.

\end{proof}

Suppose $Y$ is a manifold, and $\pi_1(Y)$ is its fundamental group. It is well known the equivalence between the category of $\pi_1(Y)$ representations and the category of local systems on $Y$:
\begin{equation}\label{rep-loc}
Rep(\pi_1(Y)) \cong loc(Y).
\end{equation}

We say a local system $\cE\in loc(X\setminus K)$ is a \textit{KCH local system} if it comes from a KCH representation through the correspondence (\ref{rep-loc}).

\subsection{Unipotent KCH representation}\label{Sec:UniKCH}
The KCH representation requires not only that the action of the chosen meridian has an invariant subspace of codimension one, but also that the matrix of the action is diagonalizable. If we remove the diagonalizable condition, some more representations will be included. We study these representations in this section.

We first understand the action of the meridian. Given a fixed dimension $n$, let $I_n$ be the identity matrix,  and let $E_{ij}$ be the square matrix which is $1$ at entry $(i,j)$, and $0$ at all the other entries.

\begin{lem}\label{StdMatrix}
Let $V$ be a vector space with dimension $n\geq 1$ and $A\in GL(V)$. Suppose there is a subspace $W\subset V$ of codimension $1$ such that $A|_W = \mathrm{id}_W$. Then after choosing some basis, either (1) $A = I_n + cE_{11},  \text{ for some } c\neq 0, -1$; or (2) $n\geq 2$ and $A = I_n + E_{12}$.

\end{lem}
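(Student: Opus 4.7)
The key observation is that the hypothesis is equivalent to saying $A - I_n$ has rank exactly $1$. Indeed, the fixed subspace of $A$ equals $\ker(A - I_n)$, so $\dim \ker(A - I_n) = n - 1$, whence $\mathrm{rank}(A - I_n) = 1$. The two cases in the lemma should correspond to the standard dichotomy for rank-one perturbations of the identity: either $A$ is diagonalizable with a single nontrivial eigenvalue, or $A$ is a nontrivial unipotent matrix. I will separate these by asking whether $\mathrm{Image}(A - I_n) \subset \ker(A - I_n)$.

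\textbf{Case 1: $\mathrm{Image}(A - I_n) \not\subset \ker(A - I_n)$.} Pick a generator $v_1$ of the one-dimensional image. Since $v_1 \notin \ker(A - I_n)$, the vector $(A - I_n) v_1$ is nonzero and still lies in the image, so $(A - I_n) v_1 = c\, v_1$ for some scalar $c \neq 0$, i.e.\ $v_1$ is an eigenvector of $A$ with eigenvalue $1 + c$. The invertibility of $A$ then forces $c \neq -1$. Next, because $v_1 \notin \ker(A - I_n)$, any basis $v_2, \dots, v_n$ of $\ker(A - I_n)$ together with $v_1$ forms a basis of $V$, and in this basis $A = I_n + c E_{11}$.

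\textbf{Case 2: $\mathrm{Image}(A - I_n) \subset \ker(A - I_n)$.} Here $(A - I_n)^2 = 0$, so $A$ is unipotent; the condition $\mathrm{rank}(A - I_n) = 1$ forces $n \geq 2$. Choose any $v_2 \in V \setminus \ker(A - I_n)$ and set $v_1 := (A - I_n) v_2$, which is nonzero and lies in $\ker(A - I_n)$. Extend $\{v_1\}$ to a basis $\{v_1, v_3, \dots, v_n\}$ of $\ker(A - I_n)$; since $v_2 \notin \ker(A - I_n)$, the vectors $\{v_1, v_2, v_3, \dots, v_n\}$ form a basis of $V$. One checks $A v_1 = v_1$, $A v_2 = v_1 + v_2$, and $A v_i = v_i$ for $i \geq 3$, which gives $A = I_n + E_{12}$.

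The proof is essentially a translation of the rank-one Jordan form. The only mildly delicate points are verifying that the chosen vectors span $V$ (handled by the dimension count $\dim \ker(A - I_n) = n - 1$ together with $v_1 \notin \ker(A - I_n)$ in Case 1, respectively $v_2 \notin \ker(A - I_n)$ in Case 2), and isolating the invertibility constraint $c \neq -1$ in Case 1. No step looks like a genuine obstacle.
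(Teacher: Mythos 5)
Your proof is correct and takes essentially the same route as the paper's: both reduce to the observation that $A - I_n$ has rank one and split on the same dichotomy --- the paper's case distinction $c_1 = 1$ versus $c_1 \neq 1$ is exactly your condition on whether $\mathrm{im}(A - I_n)$ is contained in $\ker(A - I_n)$. The only difference is cosmetic: you construct the final basis directly in each case, whereas the paper first passes through an intermediate ``first-row'' normal form and then performs explicit changes of basis.
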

\begin{proof}
It is obvious when $n=1$. When $n \geq 2$, there exists a basis $\{v_1,\dotsb, v_n\}$ such that
$$
A= \begin{pmatrix}
c_1 & c_2 & \dotsb & c_n \\
0 & 1 & \dotsb & 0 \\
\vdots & \vdots & \ddots & \vdots \\
0 & 0 & \dotsb & 1 
\end{pmatrix}.
$$
Since $A$ is invertible, $c_1\neq 0$. If $c_1 \neq 1$, we can choose other basis elements $v_i' = v_i + (1-c_1)^{-1}c_iv_1$ for $i =2,\dots,n$, so that $A = \textrm{diag} \{c_1,1,\dotsb,1\} = I_n + (c_1-1)E_{11}$. If $c_1 =1$, we can change the basis so that there is at most one non-zero number among $c_1,\dotsb, c_n$. Without loss of generality, we assume that $c_2\neq 0$. Then $A = I_n+E_{12}$ under the new basis $v_1' = c_2v_1$, $v_2' = v_2$, and $v_i' = c_2v_i - c_iv_2$ for $i= 3,\dots,n$.

\end{proof}

Following the lemma, there are two possibilities if we only require the meridian action is trivial on a codimension one subspace. One of the cases is the KCH representation. We define the other case to be the unipotent KCH representation, which is termed so because the meridian matrix can be normalized to a unipotent matrix.

\begin{defn}
Suppose $V$ is a vector space of dimension $n\geq 2$, and $m\in \pi_K$ is a fixed meridian. A representation $\rho: \pi_K\rightarrow GL(V)$ is a \textit{unipotent KCH representation} if $\rho(m)$ is similar to $I_n + E_{12}$ by conjugation.

We say a local system $\cE_u\in loc(X\setminus K)$ is a \textit{unipotent KCH local system} if it comes from a unipotent KCH representation through the correspondence (\ref{rep-loc}).
\end{defn} 

Let $\{m_i\}_{i\in I}$ be the set of meridians generating the knot group. Let $(\rho, V)$ be a $n$ dimensional unipotent KCH representation. For each meridian $m_i$, we define a subspace
$$V_i = \textrm{im}\, (\textrm{id}_V - \rho(m_i)) \subset V,$$
which is always $1$ dimensional by definition. Define
$$V_0 = \textrm{Span}_k\{V_I\}.$$

\begin{lem}\label{UniKCHirrsubrep}
$V_0$ is an irreducible sub-representation of $V$.
\end{lem}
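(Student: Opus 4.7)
The plan is to verify that each generating meridian $m_j$ preserves $V_0$; this suffices because the meridians generate $\pi_K$, and since $V_0$ is finite dimensional while $\rho(m_j)|_{V_0}$ is injective, the restriction is an automorphism of $V_0$ and hence $\rho(m_j^{-1})$ also preserves $V_0$.

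Before the main computation, I would note that every $V_j$ is one-dimensional, not merely the one associated with the fixed meridian $m$ of the definition. By property (3) in Section \ref{Sec:Knotgroup} any meridian satisfies $m_j = g^{-1} m g$ for some $g \in \pi_K$, and then
$$\rho(m_j) - Id_V = \rho(g)^{-1}\bigl(\rho(m) - Id_V\bigr)\rho(g),$$
so $V_j$ has the same dimension as $\textrm{im}(\rho(m) - Id_V)$, which is one by Lemma \ref{StdMatrix} applied to the normal form $I_n + E_{12}$.

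The main step is then a short computation: for any generator $v_i$ of $V_0$, write
$$\rho(m_j)\, v_i = v_i + \bigl(\rho(m_j) - Id_V\bigr)\, v_i.$$
The first summand lies in $V_0$ by construction, and the second summand lies in $V_j = \textrm{Span}_k\{v_j\} \subset V_0$. Therefore $\rho(m_j)\, V_0 \subset V_0$, which together with the invertibility remark above yields the full $\pi_K$-invariance. There is no serious obstacle here; the whole substance of the argument is the observation that the rank-one image of each $\rho(m_j) - Id_V$ already lives inside $V_0$, so the correction term when translating $v_i$ by $\rho(m_j)$ automatically stays in $V_0$.
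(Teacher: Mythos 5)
Your proof is correct and is essentially the paper's own argument: the paper likewise decomposes $\rho(m_i)v_0 = (\rho(m_i)-Id_V)v_0 + v_0 \in V_i + V_0 = V_0$ and concludes invariance under the whole group. Your extra remarks (the conjugation argument showing each $V_j$ is one-dimensional, and the finite-dimensionality point handling inverses of generators) only make explicit details the paper leaves implicit.
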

\begin{proof}
To show that $V_0$ is closed under the knot group action, it suffices to prove for meridian generators. For any $m_i$ and any $v_0\in V_0$,
$$\rho(m_i) (v_0)= - (\textrm{id}_V - \rho(m_i)) (v_0)+ \textrm{id}_V(v_0) \subset V_i + V_0 = V_0.$$
Therefore $V_0$ is closed under the action of any $m_i$, and further the  entire knot group, proving that $V_0$ is a sub-representation. The irreducibility follows from the fact that $\textrm{Span}_{k[\pi_K]}\{V_I\} = \textrm{Span}_{k[\pi_K]}\{v_i\}$ for any $i\in I$ and any non-zero $v_i\in V_i$, similar to the case of KCH representations.
\end{proof}

\begin{lem}\label{quottrivial2}
The quotient representation $V/V_0$ is trivial.
\end{lem}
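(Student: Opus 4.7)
The plan is to imitate the strategy used for Lemma \ref{quottrivial1}: reduce the statement to showing that every meridian generator acts as the identity on the quotient, and then exploit the defining property of the subspace $V_0$.

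First I would observe that since the meridians $\{m_i\}_{i\in I}$ generate $\pi_K$, it suffices to show $\bar{\rho}(m_i) = \mathrm{Id}_{V/V_0}$ for every $i\in I$. Equivalently, I need to show that $\rho(m_i)(v) - v \in V_0$ for every $v\in V$ and every $i\in I$.

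The key step is to unwind the definition of $V_0$. By construction, $V_i = \mathrm{im}\,(\rho(m_i) - \mathrm{Id}_V)$ is one-dimensional, spanned by the chosen vector $v_i$, and by definition $v_i \in V_0$. Therefore $V_i \subset V_0$ for each $i\in I$. Given any $v\in V$, we have
\[
\rho(m_i)(v) - v = (\rho(m_i) - \mathrm{Id}_V)(v) \in V_i \subset V_0,
\]
so $\rho(m_i)$ fixes every class in $V/V_0$. I would note that this argument is actually simpler than the KCH case, since no appeal to conjugacy of the meridians is needed; the inclusion $V_i \subset V_0$ applies uniformly to all generators.

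There is no real obstacle here. The only subtle point worth flagging is that the argument uses $V_i \subset V_0$, which holds because the chosen generator $v_i$ spans the one-dimensional space $V_i$; this would be the one line of the proof to make sure is clearly justified before concluding.
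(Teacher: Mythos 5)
Your proof is correct, and since the paper's own proof is just the remark that the argument is ``similar to that of Lemma \ref{quottrivial1},'' your write-up is exactly the intended adaptation: each generator $m_i$ satisfies $(\rho(m_i)-\mathrm{Id}_V)(V)=V_i\subset V_0$, hence acts as the identity on $V/V_0$. Your observation that conjugacy of meridians is not even needed here is a fair (minor) simplification over the KCH case.
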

\begin{proof}
The proof is similar to that of Lemma \ref{quottrivial1}.
\end{proof}

In the remaining of the subsection, we study some properties of the unipotent KCH representation. If $V_0$ has dimension $1$, then the unipotent KCH representation is an extension of trivial representations. If $V_0$ has dimension greater or equal to $2$, we show by an example that there exist irreducible unipotent KCH representations.

\begin{prop}
Let $\rho: \pi_K \rightarrow GL(V)$ be a unipotent KCH representation. If $V_0\subset V$ has dimension $1$, then $V$ is an extension of trivial representations.
\end{prop}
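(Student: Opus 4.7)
The plan is to exhibit a two-step filtration $0\subset V_0\subset V$ by $\pi_K$-subrepresentations in which both successive quotients are trivial. Lemma \ref{quottrivial2} already handles the top piece $V/V_0$, so the entire content of the proof reduces to showing that the one-dimensional subspace $V_0$ itself carries the trivial action of $\pi_K$.

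The key ingredient is a ``square-zero'' observation about the meridian action. By Lemma \ref{StdMatrix}(2), for every meridian generator $m_i$ the matrix $\rho(m_i)$ is conjugate to $I_n + E_{12}$, and $E_{12}^2 = 0$. Hence $(\rho(m_i) - Id_V)^2 = 0$, which means $\mathrm{im}(\rho(m_i) - Id_V) \subset \ker(\rho(m_i) - Id_V)$. In other words, $\rho(m_i)$ acts as the identity on the line $V_i = \mathrm{im}(\rho(m_i) - Id_V)$.

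Now I would invoke the hypothesis $\dim V_0 = 1$. Since each nonzero vector $v_i$ lies in both the 1-dimensional space $V_i$ and in the 1-dimensional space $V_0 = \mathrm{Span}_k\{v_I\}$, we get an equality $V_i = V_0$ for every $i \in I$. Combined with the previous paragraph, every meridian generator acts as identity on $V_0$, and since $\pi_K$ is generated by meridians, the full group $\pi_K$ acts trivially on $V_0$. Assembling this with Lemma \ref{quottrivial2} gives a short exact sequence $0 \to V_0 \to V \to V/V_0 \to 0$ of $\pi_K$-representations with both end terms trivial, which is exactly the required description of $V$ as an extension of trivial representations.

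I do not expect a serious obstacle: the proof is essentially the one-line remark that $\rho(m_i) - Id_V$ is nilpotent of square zero, so its image is pointwise fixed. The only place that requires a moment of care is the step where one identifies $V_i$ with $V_0$; here one should be explicit that $V_i$ really is one-dimensional (which follows from $\rho(m_i)$ being conjugate to the non-identity matrix $I_n + E_{12}$, so that $v_i \ne 0$), and that it is contained in the one-dimensional space $V_0$, forcing equality.
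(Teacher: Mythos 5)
Your proposal is correct and follows essentially the same route as the paper: establish $V_i = V_0$ from the dimension hypothesis, note that each $\rho(m_i)$ fixes $V_i$ pointwise, conclude $V_0$ is trivial, and invoke Lemma \ref{quottrivial2} for the quotient. Your explicit square-zero justification $(\rho(m_i)-Id_V)^2=0$ for why the meridian acts as the identity on $V_i$ is a detail the paper leaves implicit, but it is the same argument.
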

\begin{proof}
We always have the short exact sequence of representations (or $k[\pi_K]$-modules)
$$0\rightarrow V_0 \rightarrow V\rightarrow V/V_0 \rightarrow 0.$$
Since $V_0$ has dimension $1$, $V_i=V_0$ for all $i$. For each $i\in I$, the restriction of $\rho(m_i)$ to $V_i$ is identity. Therefore $V_0$ is a trivial representation. It follows from Lemma \ref{quottrivial2} that $V/V_0$ is also trivial.
\end{proof}

\begin{eg} The Wirtinger presentation of the knot group of the trefoil is
$$\pi_K = \la m_1,m_2,m_3 \ra / (m_3m_2 = m_2m_1 = m_1m_3).$$
More specifically, we consider a planar diagram of the trefoil knot with three strands and three crossings. Each strand gives rise to a meridian generator. Each crossing imposes a relation among the generators. There is a redundant relation.

We define a unipotent KCH representation $\rho: \pi_K\rightarrow GL(2,k)$  by
$$\rho(m_1) =
\begin{pmatrix}
1 & 1 \\
0 & 1
\end{pmatrix},
\quad
\rho(m_2) =
\begin{pmatrix}
1 & 0 \\
-1 & 1
\end{pmatrix},
\quad 
\rho(m_3) =
\begin{pmatrix}
2 & 1 \\
-1 & 0
\end{pmatrix}.
$$
It is straightforward to verify that the relations in the knot group are satisfied. We will argue that $\rho$ is irreducible. Observe that $(1,0)^t$ spans the invariant subspace of $\rho(m_1)$ and $(0,1)^t$ spans the invariant subspace of $\rho(m_2)$. Since they are transverse, there is no proper invariant subspace of the $\pi_K$-action, proving the irreducibility.
\end{eg}

\section{Sheaves}\label{SectionSheaves}

Suppose $Y$ is a smooth manifold. Let $Mod(Y)$ be the abelian category of sheaves of $k$-modules on $Y$, and $Sh(Y)$ the bounded dg derived category. The abelian category $Mod(Y)$ is equivalent to the subcategory of $Sh(Y)$ consisting of objects concentrated in degree zero. 

Let $Loc(Y)\subset Sh(Y)$ be the subcategory of locally constant sheaves. Then $loc(Y) = Loc(Y)\cap Mod(Y)$ is the category of local systems in the usual sense, namely $\pi_1(Y)$ representations.

\subsection{Singular support}\label{Sec:Ssupp}

To each sheaf $\cF\in Sh(Y)$, one can associate a closed conic subset $SS(\cF)\subset T^*Y$, called the \textit{singular support} or the \textit{micro-support} \cite[Definition 5.1.1]{KS}. Typical examples include: (1) a sheaf is locally constant if and only if its micro-support is contained in the zero section, (2) the constant sheaf supported on a closed submanifold $Z$ has its micro-support being $T^*_Z Y$.

Let $0_Y$ be the zero section of $T^*Y$ and $\dot{T}^*Y = T^*Y\setminus 0_Y$. Suppose $\Lambda \subset \dot{T}^*Y$ is a connected conic closed Lagrangian. By a theorem of Guillermou-Kashiwara-Schapira \cite{GKS}, the subcategory
$$Sh_{\Lambda}(Y) = \{\cF\in Sh(Y)\,|\, SS(\cF)\cap \dot{T}^*Y \subset \Lambda\},$$
is invariant under a homogeneous Hamiltonian isotopy. Let $Mod_\Lambda(Y) = Sh_\Lambda(Y)\cap Mod(Y)$.

\smallskip
The singular support has functorial behaviors. Using the property of the singular support in \cite{KS}, we give a characterization of the sheaves microsupported along the conormal bundle of a closed submanifold. It serves as a substitute definition of the singular support in our geometric setting.

Let $Y$ be a manifold, $i: Z\rightarrow Y$ a closed embedding and $j: Y\setminus Z \rightarrow Y$ the open embedding. Note that $i_*$ and $j_!$ are exact functors. If $S\subset T^*Y$ is a subset, we write $\dot{S} := S\cap \dot{T}^*Y$.

\begin{lem}\label{locloc}  If $\cF\in Sh(Y)$, then $\dot{SS}(\cF) \subset \dot{T}^*_Z Y$ 
if and only if $j^{-1}\cF \in Loc(Y\setminus Z)$ and $i^{-1}\cF \in Loc(Z)$. If $\cF\in Mod(Y)$, then $\dot{SS}(\cF) \subset \dot{T}^*_Z Y$ if and only if $j^{-1}\cF \in loc(Y\setminus Z)$ and $i^{-1}\cF \in loc(Z)$.
\end{lem}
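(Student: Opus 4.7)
The plan is to exploit the canonical distinguished triangle
$$j_!j^{-1}\cF\to\cF\to i_*i^{-1}\cF\to$$
(which degenerates to a short exact sequence in $Mod(Y)$, since $j^{-1}$ and $i^{-1}$ are exact functors) together with two standard estimates from \cite{KS}: (A) for the open embedding $j$, the equality $SS(j^{-1}\cF)=SS(\cF)\cap T^*(Y\setminus Z)$, together with the bound $SS(j_!\cL)\subset 0_Y\cup T^*_Z Y$ whenever $\cL$ is locally constant on $Y\setminus Z$; and (B) for the closed embedding $i$, the pushforward description
$$SS(i_*\cG)=\{(z,\tilde\xi)\in T^*Y|_Z:\tilde\xi|_{T_zZ}\in SS(\cG)\},$$
which says that the natural surjection $T^*Y|_Z\to T^*Z$ sends $SS(i_*\cG)$ onto $SS(\cG)$ with fibers contained in the conormal $T^*_Z Y$.

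For the direction $(\Leftarrow)$, assume $j^{-1}\cF\in Loc(Y\setminus Z)$ and $i^{-1}\cF\in Loc(Z)$. Then (A) gives $SS(j_!j^{-1}\cF)\subset 0_Y\cup T^*_Z Y$, and (B) applied to the locally constant $i^{-1}\cF$ gives $SS(i_*i^{-1}\cF)\subset 0_Z\cup T^*_Z Y\subset 0_Y\cup T^*_Z Y$. The standard triangle estimate $SS(\cF)\subset SS(j_!j^{-1}\cF)\cup SS(i_*i^{-1}\cF)$ then yields $\dot{SS}(\cF)\subset\dot{T}^*_Z Y$.

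For $(\Rightarrow)$, assume $\dot{SS}(\cF)\subset\dot{T}^*_Z Y$. Since the base points of $T^*_Z Y$ lie in $Z$, it does not meet $T^*(Y\setminus Z)$, and the open-embedding half of (A) delivers $SS(j^{-1}\cF)\subset 0_{Y\setminus Z}$, so $j^{-1}\cF\in Loc(Y\setminus Z)$. Feeding this local constancy back into the extension-by-zero half of (A) produces $SS(j_!j^{-1}\cF)\subset 0_Y\cup T^*_Z Y$, and the triangle now gives
$$SS(i_*i^{-1}\cF)\subset SS(j_!j^{-1}\cF)\cup SS(\cF)\subset 0_Y\cup T^*_Z Y.$$
Applying (B) to $\cG=i^{-1}\cF$, any $(z,\xi)\in SS(i^{-1}\cF)$ lifts to $(z,\tilde\xi)\in SS(i_*i^{-1}\cF)$ with $\tilde\xi|_{T_zZ}=\xi$; but the inclusion $\tilde\xi\in 0_Y\cup T^*_Z Y$ forces $\tilde\xi|_{T_zZ}=0$, hence $\xi=0$. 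Therefore $SS(i^{-1}\cF)\subset 0_Z$, i.e.\ $i^{-1}\cF\in Loc(Z)$.

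The abelian version follows from the same argument verbatim, replacing $Loc$ by $loc$: since $j^{-1}$ and $i^{-1}$ are exact, the restrictions stay concentrated in degree zero. The main obstacle is the final step of $(\Rightarrow)$: because $SS(\cF)\subset T^*_Z Y$ maximally violates the non-characteristic condition for the inclusion $i$, the usual pullback estimate $SS(i^{-1}\cF)\subset i_di_\pi^{-1}SS(\cF)$ is not directly available. One must sidestep this by recovering $SS(i^{-1}\cF)$ indirectly through the pushforward formula (B) applied to the isomorphism $i^{-1}i_*i^{-1}\cF\simeq i^{-1}\cF$, where the triangle controls $SS(i_*i^{-1}\cF)$ for us.
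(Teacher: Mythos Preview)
Your proof is correct and the overall architecture---the triangle $j_!j^{-1}\cF\to\cF\to i_*i^{-1}\cF\to$ plus the microsupport estimates for $j_!$ and $i_*$---is the same as the paper's.  The $(\Leftarrow)$ direction and the $j^{-1}\cF$ half of $(\Rightarrow)$ match the paper essentially verbatim.

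The one genuine difference is how you handle $i^{-1}\cF$ in $(\Rightarrow)$.  The paper simply invokes \cite[Proposition~5.4.5]{KS} for the closed embedding $i$ and writes $SS(i^{-1}\cF)=i_d(i_\pi^{-1}SS(\cF))$, then observes that $T^*_Z Y=\ker i_d$.  You are right to be suspicious of this citation: the pullback bound in \cite{KS} is stated under a non-characteristic hypothesis, which is maximally violated when $SS(\cF)\subset T^*_Z Y$.  Your detour---first establishing that $j^{-1}\cF$ is locally constant, then using the triangle to bound $SS(i_*i^{-1}\cF)$, and finally reading off $SS(i^{-1}\cF)$ from the exact pushforward formula (B)---avoids this issue cleanly.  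What you lose is brevity; what you gain is that every step is unambiguously licensed by \cite{KS}.

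One small remark: you invoke the extension-by-zero half of (A), namely $SS(j_!\cL)\subset 0_Y\cup T^*_Z Y$ for $\cL\in Loc(Y\setminus Z)$, as a ``standard estimate.''  It is standard, but the paper actually proves it by a local-coordinate reduction (writing $\cL$ as a pullback along the projection $\bbR^k\setminus\{0\}\times\bbR^{n-k}\to\bbR^k\setminus\{0\}$).  If you want to cite rather than reprove it, the cleanest justification is that $j_!\cL$ is constructible for the Whitney stratification $\{Z,\,Y\setminus Z\}$, hence has microsupport in the union of the conormals to the strata.
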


\begin{proof} The second assertion follows from the first assertion by considering sheaves concentrated at degree $0$. We prove the first assertion in two directions.

(1) Suppose $\dot{SS}(\cF) \subset \dot{T}^*_Z Y$. We apply \cite[Proposition 5.4.5]{KS}. Since $j$ is an open embedding, we have $SS(j^{-1}\cF) = SS(\cF) \cap T^*(Y\setminus Z)\subset 0_{Y\setminus Z}$. Hence $j^{-1}\cF\in Loc(Y\setminus Z)$.

Similarly apply \cite[Proposition 5.4.5]{KS} to $i$, $SS(i^{-1}\cF) = i_d(i_\pi^{-1}SS(\cF))$ where $T^*Z \xleftarrow{i_d} T^*Y|_Z \xrightarrow{i_\pi} T^*Y$. Because of the short exact sequence of bundle morphisms $0\rightarrow T_Z^*Y\rightarrow T^*Y|_Z\xrightarrow{i_d} T^*Z\rightarrow 0 $, we deduce that $T_Z^*Y$ is in the kernel of $i_d$. Also observe that $i_{\pi}^{-1}$ is a restriction. Hence $SS(i^{-1}\cF)\subset 0_Z$, which gives $i^{-1}\cF\in Loc(Z)$.

(2) Apply the triangle inequality of singular support to $j_!j^{-1}\cF \rightarrow \cF \rightarrow i_*i^{-1}\cF \xrightarrow{+1}$, we have $SS(\cF) \subset SS(j_!j^{-1}\cF)\cup SS(i_*i^{-1}\cF)$. Because $i_*i^{-1}\cF$ is a locally constant sheaf on the submanifold $Z$, its singular support is contained in the conormal bundle $T^*_Z Y$.

It suffices to show that $\dot{SS}(j_!j^{-1}\cF)\subset \dot{T}^*_Z Y$. Since the singular support is locally defined, we can assume $Y = \bbR^n$ with coordinates $(y_1,\dotsb, y_n)$, and $Z= \{y_1=\dotsb =y_k =0\}$. Then $U\cong (\bbR^{k}\setminus \{0\})\times \bbR^{n-k}$. Let $p:(\bbR^{k}\setminus \{0\})\times \bbR^{n-k}\rightarrow (\bbR^{k}\setminus \{0\})$ be the projection. Because $j^{-1}\cF$ is locally constant, the restriction to each fiber of $p$ is also locally constant. By \cite[Proposition 5.4.5]{KS}, there is $\cH \in Loc(\bbR^{k}\setminus \{0\})$ such that $j^{-1}\cF = p^{-1}\cH$. Let $\tilde{j} : \bbR^k\setminus\{0\} \rightarrow\bbR^k$ be the open embedding and let $\tilde{p} : \bbR^k\times \bbR^{n-k} \rightarrow\bbR^k$ be the projection, we have
$$j_!j^{-1}\cF = j_!p^{-1}\cH = \tilde{p}^{-1}\tilde{j}_!\cH.$$
Observe that $\dot{SS}(\tilde{j}_!\cH)\subset \dot{T}_{0}^*\bbR^k$, then $\dot{SS}(j_!j^{-1}\cF) = \dot{SS}(\tilde{p}^{-1}\tilde{j}_!\cH) \subset \dot{T}^*_Z Y$. We complete the proof.

\end{proof}

We learn from the previous lemma that a sheaf microsupported along $T^*_Z Y$ is determined by a
local system on $Z$ and a local system on $Y \setminus Z$. The reversed direction is characterized by the study of $\textrm{Ext}^1_Y(i_*\cG,j_!\cH)$, where $\cH\in Loc(Y\setminus Z)$ and $\cG\in Loc(Z)$. In particular, if both $\cH$ and $\cG$ are concentrated at degree $0$, an extension class is presented by a short exact sequence of sheaves:
\begin{align}\label{extension}
0\rightarrow j_!\cH \rightarrow \cF \rightarrow i_*\cG \rightarrow 0.
\end{align}
The extension classes classify the possible gluings between the local systems. In fact, they only depend on $\cG$, and $\cH$ restricted to a neighborhood of $Z$. More precisely, we have

\begin{lem}\label{ext}
$\textrm{Ext}^1_{Y}(i_*\cG,j_!\cH) = R^0Hom_{Z}(\cG,i^{-1}Rj_*\cH).$
\end{lem}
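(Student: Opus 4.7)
The plan is to compute the derived Hom by applying $RHom_Y(i_*\cG,-)$ to the canonical distinguished triangle
$$ j_!\cH \longrightarrow Rj_*\cH \longrightarrow i_*i^{-1}Rj_*\cH \xrightarrow{+1}, $$
which is obtained by evaluating the open--closed localization triangle $j_!j^{-1}(-)\to (-)\to i_*i^{-1}(-)\xrightarrow{+1}$ at $Rj_*\cH$ and using the tautological identification $j^{-1}Rj_*\cH = \cH$. Once this is set up, the claim will fall out by identifying two terms and then taking cohomology.

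The middle and right terms of the resulting long exact sequence simplify cleanly. For the middle term, the $(j^{-1},Rj_*)$-adjunction gives
$$ RHom_Y(i_*\cG,\, Rj_*\cH) \;=\; RHom_{Y\setminus Z}(j^{-1}i_*\cG,\,\cH) \;=\; 0, $$
because $i_*\cG$ is supported on $Z$ and hence $j^{-1}i_*\cG = 0$. For the right term, since $i$ is a closed embedding, $i_*$ is fully faithful (equivalently, $i^{-1}i_*\cong\mathrm{id}$), so
$$ RHom_Y(i_*\cG,\, i_*i^{-1}Rj_*\cH) \;=\; RHom_Z(\cG,\, i^{-1}Rj_*\cH). $$

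Substituting these identifications collapses the long exact sequence to an isomorphism
$$ RHom_Y(i_*\cG,\, j_!\cH) \;\cong\; RHom_Z(\cG,\, i^{-1}Rj_*\cH)[-1]. $$
Taking $H^1$ of both sides turns the left-hand side into $Ext^1_Y(i_*\cG, j_!\cH)$ and the right-hand side into $H^0 RHom_Z(\cG,i^{-1}Rj_*\cH) = R^0Hom_Z(\cG, i^{-1}Rj_*\cH)$, which is exactly the assertion of the lemma.

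The argument is essentially formal and presents no real obstacle: it consumes only the standard adjunctions and the fully faithfulness of $i_*$ for a closed embedding. The only step worth a brief sanity check is the vanishing $j^{-1}i_*\cG=0$, which is immediate from the disjointness of $Z$ and $Y\setminus Z$.
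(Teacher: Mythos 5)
Your argument is correct and is essentially the paper's own proof: both apply $RHom_Y(i_*\cG,-)$ to the triangle $j_!\cH \to Rj_*\cH \to i_*i^{-1}Rj_*\cH \xrightarrow{+1}$, kill the middle term via $j^{-1}i_*\cG=0$, identify the third term over $Z$ (your appeal to full faithfulness of $i_*$ is just the paper's adjunction plus $i^{-1}i_*=\mathrm{id}$ packaged together), and take $H^1$. No differences worth noting.
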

\begin{proof}
Apply $RHom_{Y}(i_*\cG, -)$ to the distinguished triangle
$$j_!\cH\rightarrow Rj_*\cH\rightarrow Rj_*\cH|_{Z}\xrightarrow{+1}.$$
Because $j^{-1}\circ  i_* =0$, the middle term is $RHom_{Y}(i_*\cG,Rj_*\cH) = RHom_Z(j^{-1}i_*\cG, \cH) = 0$. The triangle implies $RHom_{Y}(i_*\cG,j_!\cH) = RHom_{Y}(i_*\cG,Rj_*\cH|_Z[-1])$. Continuing the calculation, we have
\begin{align*}
R\textrm{Hom}_{Y}(i_*\cG,j_!\cH) &= R\textrm{Hom}_{Y}(i_*\cG,Rj_*\cH|_Z[-1]), &&[\text{Explained}]\\
&= R\textrm{Hom}_{Y}(i_*\cG,Rj_*\cH|_Z)[-1], &&[\text{Degree shift}]\\
&= R\textrm{Hom}_{Y}(i_*\cG,i_*i^{-1}Rj_*\cH)[-1], &&[\text{Definition of the restriction}]\\
&= R\textrm{Hom}_{Z}(i^{-1}i_*\cG,i^{-1}Rj_*\cH)[-1], &&[\text{Adjunction}]\\
&= R\textrm{Hom}_{Z}(\cG,i^{-1}Rj_*\cH)[-1]. &&[i^{-1}i_* = \textrm{id}]
\end{align*}
Taking the cohomology at degree $1$ completes the proof.
\end{proof}

\subsection{Simple sheaves}\label{Sec:Simple}

Recall that $X = S^3$ or $\bbR^3$, and $K\subset X$ is an oriented knot. 

We denote the conormal bundle removing the zero section by $\Lambda_K = N^*_KX\cap \dot{T}^*X$.

Instead of the general definition of a simple sheaf in \cite[Definition 7.5.4]{KS}, we introduce a version in the context of knot conormals, beginning with a lemma.

\begin{lem}\label{SheafRep}

Let $\ell$ be the longitude and $m$ the preferred meridian of $K$. A sheaf $\cF \in Mod_{\Lambda_K}(X)$ is equivalent to the following data:

\begin{enumerate}
\item
a representation $\rho :\pi_K\rightarrow GL(V)$, and
\item
a representation $\rho': \bbZ_K\rightarrow GL(W)$, where the subscript represents the generator, and 
\item
a linear transform $T: W\rightarrow V$, such that (a) $\rho(\ell) \circ T= T\circ \rho'(K)$ and (b) $m$ acts on the image of $T$ as identity.
\end{enumerate}
\end{lem}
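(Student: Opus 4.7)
The plan is to extract the three pieces of data $(\rho, \rho', T)$ from a sheaf $\cF \in Mod_{\Lambda_K}(X)$ by separating $\cF$ into its open and closed restrictions, capturing the gluing through a single extension class, and then to show that the construction is reversible.

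By Lemma \ref{locloc}, $\cH := j^{-1}\cF$ is a local system on $X \setminus K$ and $\cG := i^{-1}\cF$ is a local system on $K$. Via the equivalence (\ref{rep-loc}), these correspond to representations $\rho : \pi_K \to GL(V)$ and $\rho' : \bbZ_K \to GL(W)$, supplying (1) and (2) of the lemma. Because $j$ is an open embedding and $i$ a closed embedding, both $j_!$ and $i_*$ are exact, so the adjunction triangle $j_! j^{-1} \cF \to \cF \to i_* i^{-1} \cF \xrightarrow{+1}$ becomes a short exact sequence in $Mod(X)$, exhibiting $\cF$ as an extension of $i_* \cG$ by $j_! \cH$.

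By Lemma \ref{ext} the extension class lies in $R^0 Hom_K(\cG, i^{-1} Rj_* \cH)$, which equals $Hom_K(\cG, H^0(i^{-1} Rj_* \cH))$ since $\cG$ is concentrated in degree zero. To identify the target, I would fix a tubular neighborhood $n(K) \cong K \times D^2$; then the stalk of $i^{-1} Rj_* \cH$ at $x \in K$ computes the cohomology of a small punctured meridian disk with coefficients in $\cH$. The fundamental group of the punctured disk is generated by the preferred meridian $m$, so the degree-zero stalk is the space of $m$-invariants $V^{\rho(m)}$. Moving $x$ along $K$, the longitude $\ell$ commutes with $m$ in $\pi_K$, so $\rho(\ell)$ preserves $V^{\rho(m)}$ and provides the monodromy of the local system $H^0(i^{-1} Rj_* \cH)$ on $K$. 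A morphism in $Hom_K(\cG, H^0(i^{-1} Rj_* \cH))$ is therefore a linear map $W \to V^{\rho(m)}$ intertwining $\rho'(K)$ and $\rho(\ell)|_{V^{\rho(m)}}$; composing with the inclusion $V^{\rho(m)} \hookrightarrow V$ produces the transform $T : W \to V$ of the lemma, whose conditions (a) and (b) encode the intertwining relation and the containment $\mathrm{im}\, T \subset V^{\rho(m)}$ respectively.

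For the converse direction, given $(\rho, \rho', T)$ as in (1)--(3), condition (b) places $\mathrm{im}\, T$ inside $V^{\rho(m)}$, condition (a) makes the resulting map $W \to V^{\rho(m)}$ equivariant, and Lemma \ref{ext} packages this into an extension class, producing a sheaf $\cF$. One verifies $j^{-1} \cF \cong \cH$ and $i^{-1} \cF \cong \cG$ from the short exact sequence, and $\cF \in Mod_{\Lambda_K}(X)$ follows again from Lemma \ref{locloc}. The main obstacle is the explicit stalk-plus-monodromy computation of $i^{-1} Rj_* \cH$: one must carefully use the product structure of $n(K)$ to confirm that $H^0$ really is the meridian-invariant subspace $V^{\rho(m)}$ and that the monodromy along $K$ acts by $\rho(\ell)|_{V^{\rho(m)}}$ rather than by some twisted operator.
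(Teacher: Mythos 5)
Your proposal is correct and follows essentially the same route as the paper: Lemma \ref{locloc} to extract the two local systems, Lemma \ref{ext} to reduce the gluing datum to a class in $R^0Hom_K(\cG,i^{-1}Rj_*\cH)$, and the identification of $H^0(i^{-1}Rj_*\cH)$ with the meridian-invariant subspace $V^{\langle m\rangle}$ carrying the longitude action (the paper records this as the two-term complex $V\xrightarrow{1-\rho(m)}V$ with its $\rho(\ell)$-action). The only cosmetic difference is that you run the extension-class computation in both directions, whereas the paper obtains $T$ in the forward direction directly as the cospecialization map on stalks.
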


\begin{proof}
(1) Suppose $\cF\in Mod_{\Lambda_K}(X)$. By Lemma \ref{locloc}, we have $j^{-1}\cF\in loc(X\setminus K)$ and $i^{-1}\cF\in loc(K)$. The local systems give rise to the representations $\rho :\pi_K\rightarrow GL(V)$ and $\rho': \bbZ_K\rightarrow GL(W)$  (\ref{rep-loc}). By construction, $V$, $W$ are the stalks of $\cF$ on $X\setminus K$ and $K$. Because $K$ is a closed submanifold, the sheaf data give a restriction map $T: W\rightarrow V$.  

The restriction map has to be compatible with the $\pi_K$-action on $V$ and the $\bbZ_K$-action on $W$. Since the restriction map is local, we only need on $V$ the action of the subgroup $\bbZ_m\times \bbZ_\ell\subset \pi_K$. The compatibility is expressed as the conditions (3a) and (3b) for $T$.

(2) Conversely, assuming the list of data, we construct a sheaf in $Mod_{\Lambda_K}(\cF)$. The two representations determine $\cH\in loc(X\setminus K)$ and $\cG\in loc(K)$. The desired sheaf is determined by a class in $\textrm{Ext}^1_X(i_*\cG,j_!\cH)$, or $R^0\textrm{Hom}_{K}(\cG, i^{-1}Rj_*\cH)$ by Lemma \ref{ext}, where $\cH$ and $\cG$ are considered as complexes concentrated at degree $0$. Because $\cG$ is concentrated at degree $0$, classes in $R^0\textrm{Hom}$ are just closed maps. The sheaf $i^{-1}Rj_*\cH$ is described by the complex
$$0\longrightarrow V\xrightarrow{1-\rho(m)} V\longrightarrow 0,$$
together with an action $\rho(\ell)$ on $V$, which commutes with the differential of the complex. Now $T$ (with condition (3a)) determines a degree $0$ map $f$:
\begin{center}
\begin{tikzpicture}
  \node (A){$0$};
  \node (B)[right of=A, node distance=1.5cm]{$W$};
  \node (C)[right of=B, node distance=3cm]{$0$};
  \node (D)[right of=C, node distance=1.5cm]{$0$};
  \node (E)[below of=A, node distance=1.5cm]{$0$};
  \node (F)[below of=B, node distance=1.5cm]{$V$};
  \node (G)[below of=C, node distance=1.5cm]{$V$};
  \node (H)[right of=G, node distance=1.5cm]{$0$};
  \draw[->] (A) to node [] {$$} (B);
  \draw[->] (B) to node [swap]{$$}(C);
  \draw[->] (C) to node [swap]{$$}(D);
  \draw[->] (E) to node []{$$}(F);
  \draw[->] (F) to node []{$1-\rho(m)$}(G);
  \draw[->] (G) to node []{$$}(H);
    \draw[->] (C) to node []{$0$}(G);
    \draw[->] (B) to node [swap]{$T$}(F);
\end{tikzpicture}
\end{center}
Recall that a morphism $f: A^\bullet \rightarrow B^\bullet$ in the dg derived category has the differential $df = fd_A - (-1)^{\deg{(f)}}d_B f$. In our case, that is $df = -(1-\rho(m))\circ T$. It is zero because the condition (3b) that $\rho(m)$ acts on the image of $T$ as identity. Therefore $f$ is closed and we obtain the desired morphism, and further the desired sheaf.
\end{proof}

\begin{defn}\label{simplesheaf}
Suppose $T:W\rightarrow V$ is the linear transform determined by a sheaf $\cF\in Mod_{\Lambda_K}(X)$ as in Lemma \ref{SheafRep}. We say $\cF$ is \textit{simple} if $cone(T)$ has rank $1$. In other words, either
\begin{enumerate}
\item
$T$ is injective with a rank $1$ cokernel, or
\item
$T$ is surjective with a rank $1$ kernel.
\end{enumerate}
Let $Mod^s_{\Lambda_K}(X)\subset Mod_{\Lambda_K}(X)$ be the (no longer abelian) subcategory of simple sheaves.
\end{defn}

\subsection{Classification}\label{Sec:Classification}

Recall that $Mod_{\Lambda_K}^s(X)\subset Mod_{\Lambda_K}(X)$ is the subcategory of simple abelian sheaves microsupported along the knot conormal. In this section, we classify objects in $Mod^s_{\Lambda_K}(X)$.

We first show how to construct a simple sheaf from a KCH local system.

\begin{lem}\label{pushforwardKCH}
If $\cE \in loc(X\setminus K)$, then $j_*\cE \in Mod_{\Lambda_K}(X)$. If in addition $\cE$ is a KCH local system, then $j_*\cE \in Mod^s_{\Lambda_K}(X)$.
\end{lem}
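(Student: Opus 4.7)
The plan is to reduce both assertions to explicit stalk computations, using Lemma \ref{locloc} for the first claim and Lemma \ref{SheafRep} together with Definition \ref{simplesheaf} for the second.

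For the first assertion, by Lemma \ref{locloc} it suffices to verify that $j^{-1}j_*\cE \in loc(X\setminus K)$ and $i^{-1}j_*\cE \in loc(K)$. The first is immediate from adjunction, since $j^{-1}j_*\cE = \cE$. For the second, at a point $p\in K$ I would choose a small open ball $U\ni p$, so that $U\cap (X\setminus K)$ deformation retracts onto a meridian circle; this gives
$$(j_*\cE)_p \cong \Gamma(U\cap (X\setminus K), \cE) \cong V^{\rho(m_p)},$$
where $V$ is the stalk of $\cE$ and $m_p$ is a meridian at $p$. As $p$ varies along $K$, the meridians $m_p$ are related by conjugation along paths in $K$, which carries the invariant subspaces into one another coherently with the monodromy of $\cE$. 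This exhibits $i^{-1}j_*\cE$ as a local system on $K$, and since $j_*$ preserves degree zero, $j_*\cE$ lies in $Mod_{\Lambda_K}(X)$.

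For the second assertion, if $\cE$ is a KCH local system with underlying representation $\rho$, the stalk calculation above yields $W = V^{\rho(m)}$, and the restriction map $T: W \to V$ appearing in Lemma \ref{SheafRep} is the canonical inclusion $V^{\rho(m)} \hookrightarrow V$. Because $\rho(m)$ is diagonalizable and acts as the identity on a codimension one subspace, $V^{\rho(m)}$ has codimension one in $V$. Hence $T$ is injective with rank one cokernel, placing $j_*\cE$ in case (1) of Definition \ref{simplesheaf}, so $j_*\cE$ is simple.

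The main obstacle, though mild, is the careful verification that the assignment $p \mapsto V^{\rho(m_p)}$ assembles into a genuine local system on $K$, rather than merely a sheaf whose stalks are abstractly isomorphic. This requires tracking how a path along $K$ conjugates meridians in $\pi_K$ and checking that the resulting identifications of invariant subspaces agree with parallel transport of $\cE$; this in turn relies on the fact, recalled in Section \ref{Sec:Knotgroup}, that the torus $\partial n(K)$ singles out a coherent family of preferred meridians along $K$.
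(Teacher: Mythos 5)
Your proof is correct and follows essentially the same route as the paper: reduce the first claim to Lemma \ref{locloc}, note $j^{-1}j_*\cE = \cE$, and for the second claim compute the stalk of $j_*\cE$ along $K$ as the meridian-invariant subspace of $V$, so that $T:W\rightarrow V$ is the inclusion of a codimension-one subspace and $j_*\cE$ falls into case (1) of Definition \ref{simplesheaf}. The one place where you diverge is the verification that $i^{-1}j_*\cE$ is genuinely locally constant on $K$, not merely a sheaf with isomorphic stalks: you propose to track how paths in $K$ conjugate meridians and to match the resulting identifications with parallel transport of $\cE$, and you correctly flag this as the delicate point but leave it as a sketch. The paper instead works in a local chart $U\cong\bbR^2\times\bbR$ with $K=\{0\}\times\bbR$, writes $\cE|_{U\setminus K}=p^{-1}\cG$ for the projection $p$ onto $\bbR^2\setminus\{0\}$, and uses the base-change identity $j_*p^{-1}\cG=\tilde{p}^{-1}\tilde{j}_*\cG$ (via $p^{!}=p^{-1}[1]$) to exhibit $j_*\cE$ near $K$ as a pullback from the transverse disk, hence constant in the $K$-direction; local constancy on $K$ then comes for free, with no monodromy bookkeeping. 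Since local constancy is a local statement along $K$, the local product structure is really all that is needed; the coherence of preferred meridians along $\partial n(K)$ that you invoke only becomes relevant if you want to identify the resulting local system on $K$ globally. Your argument can be completed along the lines you indicate, but the paper's local computation is the cleaner way to close this step.
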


\begin{proof}
(1) We apply Lemma \ref{locloc} to verify the first assertion. First $j^{-1}j_*\cE = \cE \in loc(X\setminus K)$. To verify $i^{-1}j_*\cE \in loc(K)$, because the singular support is locally defined, we assume $X = \bbR^3$ and $K= \{x_1 =x_2 =0\}$. Then $X\setminus K = (\bbR^{2}\setminus \{0\})\times \bbR$. Let $p:(\bbR^{2}\setminus \{0\})\times \bbR\rightarrow (\bbR^{2}\setminus \{0\})$ be the projection. There is $\cG \in Mod(\bbR^{2}\setminus\{0\})$ such that $\cE = p^{-1}\cG$. Let $\tilde{j} : \bbR^2\setminus\{0\} \rightarrow\bbR^2$ be the open embedding, then $SS(j_*\cG) \subset T^*_{0}\bbR^2$. Let $\tilde{p} : \bbR^2\times \bbR \rightarrow\bbR^2$ be the projection, then $SS(\tilde{p}^{-1}j_*\cG) \subset T^*_KX$. Since $p$ is a topological submersion of fiber dimension $1$, we have $p^{!} = p^{-1}[1]$, and further $j_*p^{-1}\cG = j_*p^!\cG[-1] =  \tilde{p}^!\tilde{j}_*\cG[-1] = \tilde{p}^{-1}\tilde{j}_*\cG$. Therefore, $SS(i^{-1}j_*\cE)=SS(i^{-1}j_*p^{-1}\cG)=SS(i^{-1}\tilde{p}^{-1}\tilde{j}_*\cG) \subset 0_K$. Therefore we have the desired $i^{-1}j_*\cE \in loc(K)$.

(2)
Now suppose $\cE$ is a KCH local system. Because the simpleness is a local property, we adopt the local chart as above and use the same notation. Given the induced $\cG\in loc(\bbR^2\setminus\{0\})$,
$$(\tilde{j}_*\cG)_{0} = \varinjlim_{0 \in U}\Gamma(U, \tilde{j}_*\cG) = \varinjlim_{0 \in U}\Gamma(\tilde{j}^{-1}(U), \cG) = \varinjlim_{0 \in U}\Gamma(U\setminus\{0\}, \cG).$$
If $U$ is an open ball containing $0$, then $U\setminus\{0\}$ is not simply connected and its fundamental group is generated by a meridian. Let $(\rho, V)$ be the representation determined by Lemma \ref{SheafRep}. Sections over $U\setminus \{0\}$ correspond to the vectors in $V$ that are invariant under the action of the meridian. Passing to the direct limit, the stalk (or $W$ as in Lemma \ref{SheafRep}) consists of such vectors as well.

In this case, the linear transform $T: W\rightarrow V$ is the natural inclusion. The previous arguments yield that $T$ is injective with a rank $1$ cokernel. Hence the sheaf $j_*\cE$ is simple. 
\end{proof}

\begin{defn}
Define $\cS$ to be the set of isomorphism classes of objects in $Mod^s_{\Lambda_K}(X)$. The isomorphism is given by the sheaf isomorphism.
\end{defn}

\begin{thm}\label{Sheafclassify}

A sheaf $\cF\in \cS$ is isomorphic to exactly one of the following:
\begin{enumerate}
\item 
$\cL_X \oplus j_! k_{X\setminus K}$;  
\item
$j_*\cE$, for a KCH local system $\cE\in loc(X\setminus K)$;
\item
$j_*\cE_u$, for a unipotent KCH local system $\cE_u\in loc(X\setminus K)$;
\item
$\cL_X \oplus i_*\cG_\alpha$, for a rank $1$ local system $\cG_\alpha\in loc(K)$, where $\alpha\neq 0$ is the monodromy;
\item
$\cL_X \oplus \cF'$, where $\cF'$ admits the non-splitting short exact sequence
$$0\rightarrow i_*k_K \rightarrow \cF' \rightarrow k_X\rightarrow 0.$$
\end{enumerate}
Here $\cL_X$ denotes a local system on $X$.
\end{thm}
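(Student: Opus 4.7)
The strategy is to exploit Lemma \ref{SheafRep} and translate each sheaf $\cF \in Mod^s_{\Lambda_K}(X)$ into algebraic data: a representation $\rho: \pi_K \to GL(V)$, a representation $\rho': \bbZ_K \to GL(W)$, and a compatible linear map $T: W \to V$. By Definition \ref{simplesheaf}, simpleness splits into two cases, which I treat in turn: (A) $T$ is injective with $\dim \mathrm{coker}(T) = 1$, or (B) $T$ is surjective with $\dim \ker(T) = 1$.

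In Case (A), the image $\mathrm{im}(T) \subset V$ has codimension $1$ and is fixed by $\rho(m)$. If $\rho(m) = I_V$, then because meridians are conjugate and normally generate $\pi_K$, the representation $\rho$ is trivial; compatibility condition (3a) forces $\rho'(K) = I_W$ as well, and choosing any vector-space complement of $\mathrm{im}(T)$ in $V$ yields a decomposition $\cF \cong \cL_X \oplus j_!k_{X\setminus K}$ (case (1)). If $\rho(m) \neq I_V$, then $\mathrm{im}(T)$ equals the fixed subspace of $\rho(m)$, of codimension exactly $1$, and Lemma \ref{StdMatrix} puts $\rho(m)$ into one of two normal forms, exhibiting $\rho$ as a KCH representation or a unipotent KCH representation, hence producing $\cE$ or $\cE_u$ on $X \setminus K$. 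To identify $\cF$ with $j_*\cE$ (resp.\ $j_*\cE_u$), I verify that both sheaves yield the same triple under Lemma \ref{SheafRep}: as in the proof of Lemma \ref{pushforwardKCH}, the stalk $(j_*\cE)_p$ at $p \in K$ is the subspace $V^{\rho(m)}$ of meridian-invariant vectors, and the canonical restriction map is the inclusion, matching our $T$ whose image equals $V^{\rho(m)}$. This yields cases (2) and (3).

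In Case (B), surjectivity of $T$ together with condition (3b) forces $\rho(m) = I_V$, hence $\rho$ trivial. Condition (3a) now reads $T \circ \rho'(K) = T$, so $N := \rho'(K) - I_W$ factors through $\ker T$, a $1$-dimensional subspace, and therefore has rank at most $1$. A short linear-algebra analysis completes the classification. If $N = 0$, then $W$ is trivial and splitting off a complement of $\ker T$ gives $\cF \cong \cL_X \oplus i_*k_K$, the $\alpha=1$ instance of case (4). If $N \neq 0$ is diagonalizable, write $N = e \otimes f$ with $e$ spanning $\ker T$ and $f(e) \neq 0$; then $W = \mathrm{span}(e) \oplus \ker f$ splits as the $\alpha$-eigenspace and the $1$-eigenspace (with $\alpha = 1 + f(e) \neq 1$), the first summand gives $i_*\cG_\alpha$ and the second contributes to $\cL_X$, giving case (4). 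If $N \neq 0$ is nilpotent, choose $w_0 \in W$ with $Nw_0 = e$ and a complement $U \subset \ker N$ of $\mathrm{span}(e)$; the $2$-dimensional block $\mathrm{span}(e, w_0)$ realizes the non-split extension $\cF'$ of case (5), while $U$ maps isomorphically onto a complement of $T(w_0)$ in $V$ and contributes to $\cL_X$.

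The subtle point is the identification in Case (A) of $\cF$ with $j_*\cE$ rather than with some other extension having the same restrictions to $K$ and $X \setminus K$: \emph{a priori}, two such extensions could differ by a class in $Ext^1_X(i_*\cG, j_!\cH) \cong R^0Hom_K(\cG, i^{-1}Rj_*\cH)$. The explicit presentation of $i^{-1}Rj_*\cH$ as the two-term complex $V \xrightarrow{1 - \rho(m)} V$ used in both Lemma \ref{SheafRep} and Lemma \ref{pushforwardKCH} resolves this: closed degree-$0$ maps from $\cG$ correspond exactly to linear maps into the meridian-invariant subspace, and demanding this map be the natural inclusion pins $\cF$ down as $j_*\cE$ (resp.\ $j_*\cE_u$). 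The analogous decomposition arguments in Case (B) are elementary once the rank stratification of $N$ is fixed.
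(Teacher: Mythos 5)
Your proof is correct and follows essentially the same route as the paper: reduce to the linear-algebra data of Lemma \ref{SheafRep}, split on the two simpleness cases, apply Lemma \ref{StdMatrix} and identify $\cF$ with $j_*\cE$ (or $j_*\cE_u$) in the injective case, and analyze the rank-$\le 1$ operator $\rho'(K)-I_W$ in the surjective case. The only cosmetic differences are that the paper establishes $\cF\cong j_*\cE$ via the adjunction unit $\cF\to j_*j^{-1}\cF$ and a stalk check rather than by matching data triples, and writes $\rho'(K)$ as an explicit matrix rather than coordinate-free as $I_W+e\otimes f$.
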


\begin{proof}
Suppose $\cF\in Mod^s_{\Lambda_K}(X)$. Let $\rho: \pi_K\rightarrow GL(V)$, $\rho': \bbZ_K\rightarrow GL(W)$ and $T: W\rightarrow V$ be the data defined in Lemma \ref{SheafRep}. Since $\cF$ is simple, $cone(T)$ has rank $1$. We study each of the cases in Definition \ref{simplesheaf}.

(A) Suppose $0\rightarrow W\xrightarrow{T} V$ and $T$ has a rank $1$ cokernel. By Lemma \ref{SheafRep} (3b), the meridian $m$ acts as identity on the image of $T$. Therefore the codimension of the subspace of $V$ on which $m$ acts as the identity is either $0$ or $1$.

(A1) If the codimension is $0$, then the meridian $m$ acts trivially on $V$. Because any two meridians are conjugate, any meridian acts trivially. Because the knot group is generated by the meridians, the entire group acts trivially on $V$. In particular, the action of the longitude is trivial. By Lemma \ref{SheafRep} (3a), the action on $W$ is also trivial. Choose a splitting $V = \textrm{im}\,T \oplus V_0$. The sub-representation of $\textrm{im}\,T$ together with $W$ determines a constant sheaf $\cL\in loc(X)$. The subreprensentation $V_0$ determines $k_{X\setminus K}\in loc(X\setminus K)$, giving $\cF' = j_!k_{X\setminus K}$.

(A2) If the codimension is $1$, then $\cE :=j^{-1}\cF$ is either a KCH local system or a unipotent KCH local system by Lemma \ref{StdMatrix}. Because $j^{-1}$ is left adjoint to $j_*$, we have a natural morphism 
$$\cF\rightarrow j_*j^{-1}\cF= j_*\cE.$$
We prove the morphism is an isomorphism by checking the stalk at each point. If $x\in X\setminus K$, we have $(j_*j^{-1}\cF)_x = (j^{-1}\cF)_x = \cF_x$ because $j$ is an open embedding. If $x\in K$, and suppose $m$ is a meridian which bounds a disk intersecting $K$ transversely at $x$, then $\cF_x = W$ by Lemma \ref{SheafRep}, and 
\begin{align}\label{stalkatapointonknot}
\begin{split}
(j_*j^{-1}\cF)_x &= \varinjlim_{x\in U}\Gamma(U,j_*j^{-1}\cF) \\
		        &= \varinjlim_{x\in U}\Gamma(j^{-1}(U), j^{-1}\cF) \\
		        &= \varinjlim_{x\in U}\Gamma(U\cap(X\setminus K), \cF) = V^{\la m \ra},
\end{split}
\end{align}
where $V^{\la m \ra}$ is the subspace of $V$ on which $m$ acts as identity.

Being a KCH or unipotent KCH representation implies that $V^{\la m \ra}\subset V$ has codimension $1$. We identify $W$ as a subspace of $V$ through the map $T$. Both $W$ and $V^{\la m \ra}$ are codimensional $1$ subspaces of $V$ on which $m$ acts as the identity. Therefore $V^{\la m \ra} = W$. We have checked that the stalk of $\cF\rightarrow j_*\cE$ at each point is an isomorphism. Hence $\cF = j_*\cE$. 

If $\cE$ is a KCH representation, we get case (2) of the theorem.

If $\cE$ is a unipotent KCH representation, we get case (3) of the theorem.

(B) Suppose $W\xrightarrow{T} V\rightarrow 0$ and $T$ has a rank $1$ kernel. Since $V = \textrm{im}\,T$, it is invariant under the $m$-action, and further invariant under the $\pi_K$-action. Suppose $\{w_0,w_1,\dotsb, w_n\}$ is a basis of $W$ such that $w_0$ spans $\ker T$ (which is unique up to scalar multiplication). Note that $W' := \textrm{Span}_k\{w_1,\dotsb,w_n\}$ satisfies $T(W') = V$. Let $A = \rho'(K)$, which is an invertible matrix acting on $W$. We study the eigenspaces of $A$. Since $\ker T$ is a sub-representation, $Aw_0 = c_0w_0$ for some $c_0\in k^*$. We also have $T\circ A(w_i) = \rho(\ell) \circ (T(w_i)) = T(w_i)$, first by the property of $T$ and second because the $\pi_K$-action is trivial. Because $w_0$ spans $\ker T$ and $T$ identifies $W'$ with $V$, we have $Aw_i = w_i + c_iw_0$ for $i=1,\dots, n$. Now
$$
A= \begin{pmatrix}
c_0 & c_1 & \dotsb & c_n \\
0 & 1 & \dotsb & 0 \\
\vdots & \vdots & \ddots & \vdots \\
0 & 0 & \dotsb & 1 
\end{pmatrix}.
$$

(B1) If $c_0 \neq 1$, we can choose other basis elements $w_i' = w_i + (1-c_0)^{-1}c_iw_0$ for $i =1,\dots,n$, such that $A = \textrm{diag} \{c_0,1,\dotsb,1\}$. Then $\cF \cong \cL_X\oplus i_*\cG_\alpha$. The local system $\cL_X$ is determined by $W'$, $V$ and $T$. The rank $1$ local system $\cG_\alpha\in loc(K)$ with $\alpha\neq 1$ is determined by $\ker T$.

(B2) If $c_0=1$, and all the other $c_i=0$, then $\cF \cong \cL_X \oplus i_*\cG_1$, where $\cG_1 = k_K$ is the constant sheaf on the knot. Together with (B1), we obtain case (4) of the theorem.

(B3)
If $c_0 =1$, and some of the other $c_i\neq 0$. By Lemma \ref{StdMatrix}, there is a basis such that $A = I_{n+1} + E_{12}$. We also compute that $\textrm{Ext}^1_X(k_X, i_*k_K) = k$. Then $\cF  = \cL_X \oplus \cF'$, where $\cL_X\in loc(X)$ is a rank $n-1$ local system, and $\cF'$ admits the short exact sequence $0\rightarrow i_*k_K \rightarrow \cF' \rightarrow k_X\rightarrow 0$. This is the last case in the assertion.

We complete the proof.
\end{proof}

\subsection{Moduli}\label{Sec:Moduli}
We defined $\cS$ to be the set of isomorphism classes of objects in $Mod^s_{\Lambda_K}(X)$, namely the simple abelian sheaves microsupported along the knot conormal. In this section we define a quotient set of ``simple sheaves up to local systems''. 

\begin{defn}

Let $\cM$ be the quotient set of $\cS$, by the equivalence relation generated by the following relations.

If for some local system $\cL_X \in loc(X)$ and two sheaves $\cF_1,\cF_2\in Mod^s_{\Lambda_K}(X)$, there is a short exact sequence of $\cF_1,\cF_2$, and $\cL$, namely (ER1) $0\rightarrow \cL_X \rightarrow \cF_1 \rightarrow \cF_2\rightarrow 0$; or (ER2) $0\rightarrow \cF_1\rightarrow \cL_X \rightarrow \cF_2\rightarrow 0$; or (ER3) $0\rightarrow \cF_1 \rightarrow \cF_2\rightarrow \cL_X \rightarrow 0$, then $\cF_1$ and $\cF_2$ are equivalent, denoted by $\cF_1\sim \cF_2$. We also force it to be a symmetric relation, i.e. if $\cF_1 \sim \cF_2$, then $\cF_2\sim \cF_1$ as well.

\end{defn}
\begin{rmk}
Two elements $\cF,\cF' \in \cS$ are isomorphic in $\cM$, if there exists a sequence of elements $\cF_1,\dotsb, \cF_n \in \cS$ such that 
$$\cF = \cF_1 \sim \cF_2 \sim \dotsb \sim \cF_n = \cF'.$$ 
\end{rmk}

\begin{rmk} The equivalence relations defining $\cM$ make better sense in the derived category, where short exact sequences are replaced by distinguished triangles. Especially, (ER2) becomes
$$ \cL_X \rightarrow \cF_2\rightarrow \cF_1[1] \xrightarrow{+1},$$
which looks more similar to the other two equivalence relations. For now, the ad hoc definition works for our purpose of classifying simple sheaves up to local systems in the abelian category.
\end{rmk}

\begin{prop}\label{Sheafuptoloc}
An isomorphism class in $\cM$ is represented by exactly one of the following:
\begin{enumerate}
\item
$j_*\cE$, for an irreducible KCH local system $\cE\in loc(X\setminus K)$; or
\item
$j_*\cE_u$, for an irreducible unipotent KCH local system $\cE_u\in loc(X\setminus K)$; or
\item
$i_*\cG_\alpha$, for a rank $1$ local system $\cG_\alpha\in loc(K)$.\footnote{In a private conversation, Lenhard Ng explained that he used to construct some ``dimension-$0$ degenerate'' KCH representations, but the definition  was not written down. It probably corresponds to this case.}
\end{enumerate}
\end{prop}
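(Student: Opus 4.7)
The plan is to go through each of the five isomorphism classes listed in Theorem \ref{Sheafclassify} and show, modulo the three generating equivalence relations, that each one reduces to one of the three forms stated in the proposition. The key simplification I would exploit is that $X$ is simply connected, so $loc(X)$ consists only of constant sheaves $k_X^n$; the relations (ER1)--(ER3) then let me freely add or delete such a trivial local system whenever it appears as one of the three terms of a short exact sequence in $Mod_{\Lambda_K}(X)$.

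First, I would dispose of the three ``easy'' cases. Whenever $\cF = \cL_X \oplus \cF'$ (occurring in cases (1), (4), (5) of Theorem \ref{Sheafclassify}), the split SES $0 \to \cL_X \to \cF \to \cF' \to 0$ is of type (ER1), so the $\cL_X$ summand can be dropped. Case (4) thereafter matches case (3) of the proposition directly. The residual of case (1), $j_! k_{X\setminus K}$, fits in the canonical triangle $0 \to j_! k_{X\setminus K} \to k_X \to i_* k_K \to 0$ (of type ER2), so $\sim i_* k_K$. The residual of case (5) fits by definition in $0 \to i_* k_K \to \cF' \to k_X \to 0$ (of type ER3), so $\sim i_* k_K$ as well.

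For case (2), with $\cE$ a reducible KCH local system, I would take $V_0 \subset V$ to be the meridian subspace from Section \ref{Sec:KCHRep}, which by property (2) following (\ref{Merdiansubspace1}) is an irreducible $\pi_K$-subrepresentation, with trivial quotient by Lemma \ref{quottrivial1}. The idea is to apply $j_*$ to $0 \to \cE_0 \to \cE \to \cE/\cE_0 \to 0$ and show that the resulting sequence
$$0 \to j_*\cE_0 \to j_*\cE \to j_*(\cE/\cE_0) = k_X^{\dim V/V_0} \to 0$$
is exact. The only nontrivial check is stalk surjectivity on $K$, where by (\ref{stalkatapointonknot}) the map becomes $V^{\la m \ra} \twoheadrightarrow V/V_0$; I expect this to follow from the KCH diagonalizability condition on $\rho(m)$, which forces the extension cocycle $\phi : V/V_0 \to V_0$ to land in the $\mu_0$-eigenspace of $m|_{V_0}$, which equals $\mathrm{im}(m - 1)|_{V_0}$. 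The resulting SES is of type (ER3), so $j_*\cE \sim j_*\cE_0$, landing in case (1) of the proposition.

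For case (3) with $\cE_u$ unipotent KCH, the same strategy works provided $\dim V_0 \geq 2$: Lemma \ref{StdMatrix} ensures $m|_{V_0}$ is still unipotent KCH (since $(m-1)^2 = 0$ on $V$ rules out the diagonalizable alternative), and an analogous rank count yields the stalk surjectivity; I would iterate to land either at an irreducible unipotent KCH representative (case (2) of the proposition) or at the degenerate subcase $\dim V_0 = 1$. The degenerate subcase, where $V$ is an iterated extension of trivial representations by the proposition in Section \ref{Sec:UniKCH}, is where I expect the hard work to be: the naive $j_*$-SES fails to be exact there (because $V^{\la m \ra} + V_0 \subsetneq V$), so to reach $i_*\cG_\alpha$ I would need to engineer a separate chain of SESs involving trivial local systems on $X$, with $\alpha$ recovered from the longitude action on the trivial sub-representation. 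This last subcase is the main obstacle of the proof.
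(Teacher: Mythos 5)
Your reduction of the five cases of Theorem \ref{Sheafclassify} follows the paper's proof almost step for step: (ER1) removes direct summands, (ER2)/(ER3) identify $j_!k_{X\setminus K}$ and the extension $\cF'$ with $i_*k_K$, and the (unipotent) KCH cases are handled by pushing forward $0\to\cE_0\to\cE\to\cE/\cE_0\to0$ along $j$ and applying (ER3); your ``stalk surjectivity on $K$'' check is exactly the paper's computation that $V^{\la m\ra}/V_0^{\la m\ra}\cong V/V_0$. The one step you leave open --- the degenerate unipotent subcase $\dim V_0=1$ --- is a genuine issue, and to your credit it is one the paper silently skips by declaring the unipotent case ``similar''; but it is far easier than you fear, and it does \emph{not} end up in case (2) of the proposition. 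If $\dim V_0=1$ then $\pi_K$ acts trivially on both $V_0$ and $V/V_0$, so $\rho$ factors through the abelian group $\mathrm{Hom}(V/V_0,V_0)$ and hence through $H_1(X\setminus K)\cong\bbZ$; after discarding a constant summand by (ER1) one may assume $\dim V=2$, $\rho(m)=I_2+E_{12}$ and $\rho(\ell)=I_2$ (the longitude is null-homologous). The cokernel of $j_*\cE_0\to j_*\cE_u$ then has vanishing stalks along $K$ (since $V^{\la m\ra}=V_0$) and restricts to $k_{X\setminus K}$ on the complement, so it equals $j_!k_{X\setminus K}$, and $0\to k_X\to j_*\cE_u\to j_!k_{X\setminus K}\to0$ is an instance of (ER1): thus $j_*\cE_u\sim j_!k_{X\setminus K}\sim i_*k_K$, landing in case (3) with $\alpha=1$. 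The same $H_1$ argument is also what you need (and do not supply) to justify that $\dim V_0\geq2$ forces $m$ to act nontrivially on $V_0$; your parenthetical only excludes the diagonalizable alternative of Lemma \ref{StdMatrix}, not the identity.

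Separately, the paper's proof devotes its second half to showing that the listed representatives are pairwise non-equivalent in $\cM$, by restricting any putative (ER1)--(ER3) sequence to $K$ or to $X\setminus K$ and using irreducibility together with the fact that every local system on the simply connected $X$ is constant. Your proposal omits this entirely; it is needed if the proposition is to classify $\cM$, and it is relied upon later when $\cM$ is put in bijection with the set of augmentations.
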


\begin{proof}
Objects in $Mod^s_{\Lambda_K}(X)$ are explicitly written down in Theorem \ref{Sheafclassify}. (ER1) allows us to set $\cL_X=0$ whenever there is a direct summand $\cF = \cF'\oplus\cL_X$ because of the natural short exact sequence $0\rightarrow \cF' \rightarrow \cF \rightarrow \cL_X\rightarrow 0$.

For the first case of Theorem \ref{Sheafclassify}, observe that $j_!k_{X\setminus K}$ and $i_*k_K$ are in the same class, because of (ER2) and the short exact sequence $0 \rightarrow j_!k_{X\setminus K}\rightarrow k_X \rightarrow i_*k_K\rightarrow 0$. In the last case when $0\rightarrow k_K \rightarrow \cF \rightarrow k_X\rightarrow 0$, we have $\cF$ equivalent to $i_*k_K$  by definition. Therefore cases (1), (4), and (5) of Theorem \ref{Sheafclassify} all correspond to case (3) here.

Next we focus on case (2) and (3) of Theorem \ref{Sheafclassify}, when the sheaf comes from a KCH or a unipotent KCH local system. Suppose $\cE$ is a KCH local system corresponding to the KCH representation $(\rho, V)$. The meridian subspace, defined in (\ref{Merdiansubspace1}), gives rise to a sub-representation $(\rho, V_0)$ which is irreducible and also KCH. Let $\cE_0$ be the associated KCH local system. Since $\cE_0\subset \cE$ is a subsheaf, there is a short exact sequence of sheaves
$$0\rightarrow j_*\cE_0\rightarrow j_*\cE \rightarrow j_*\cE/j_*\cE_0 \rightarrow 0.$$
We will argue that $j_*\cE/j_*\cE_0$ is a local system on $X$, then by (ER3) any KCH representation is equivalent to an irreducible one. First apply the exact functor $j^{-1}$, we have $j^{-1}( j_*\cE/j_*\cE_0) = \cE/\cE_0$, which is a trivial local system on $X\setminus K$ by Lemma \ref{quottrivial1}. Second for any point $x\in K$, we restrict to a contractible open neighborhood $U$ containing $x$. Since $\cF\mapsto \cF|_U$ is also an exact functor, we have the short exact sequence 
\begin{equation}\label{KCHreprestrictedtoU}
0\rightarrow j_*\cE_0|_U\rightarrow j_*\cE|_U \rightarrow (j_*\cE/j_*\cE_0)|_U \rightarrow 0.
\end{equation}
Locally, we can assume $U = \bbR^3$, $K = \{x_1 = x_2 = 0\}$. There is a unique meridian $m$ up to homotopy. Taking the stalk at $x$ (which is also an exact functor) of (\ref{KCHreprestrictedtoU}), there is 
$$0\rightarrow (j_*\cE_0|_U)_x \rightarrow (j_*\cE|_U)_x \rightarrow ((j_*\cE/j_*\cE_0)|_U)_x \rightarrow 0.$$
Unwrapping the definition (in the way of (\ref{stalkatapointonknot})), we see $(j_*\cE_0|_U)_x = \Gamma(U, \cE_0) = V_0^{\la m \ra}$, and $(j_*\cE|_U)_x = \Gamma(U, \cE) = V^{\la m \ra}$. The superscript refers to the invariant subspace under the action of $m$. Hence $((j_*\cE/j_*\cE_0)|_U)_x = V^{\la m \ra}/V_0^{\la m \ra}$. By construction, it is isomorphic to $V/V_0$, which is the stalk of $\cE/\cE_0$ at any point on $X\setminus K$. 
Therefore any KCH local system is equivalent to an irreducible KCH local system in $\cM$. The proof for the unipotent KCH representation is similar (where Lemma \ref{quottrivial1} is replaced by Lemma \ref{quottrivial2}).

Now we have reduced to the three cases in the assertion. It remains to show that any two cases are not equivalent. Our basic strategy is to assume that there exists one of the short exact sequence in the equivalence relations, and then to derive a contradiction by restricting to $K$ or to $X\setminus K$.

We first consider $j_*\cE$ for an irreducible KCH representation. There cannot be (ER1) or (ER3), because we can restrict to $X\setminus K$ and the resulting short exact sequence yields that $\cE$ has a proper subsheaf, which contradicts to that $\cE$ is irreducible. Assume we have (ER2), i.e. a short exact sequence  $0\rightarrow \cF_1\rightarrow \cL_X \rightarrow \cF_2\rightarrow 0$. If $\cF_1 = j_*\cE$, by restricting to $K$, we first see that $\cF_2$ cannot be the push forward of a KCH or a unipotent KCH local system by dimension reasons. The only possibility left is that $\cF_2 = i_*\cG_\alpha$ for a local system $\cG_\alpha\in loc(K)$. Restricting to the knot complement $X\setminus K$, we see that $\cE$ is isomorphic to $\cL_X|_U$. However $X$ is simply connected, yielding $\cL_X = k_X^{\oplus n}$, and therefore $\cL_X|_U$ is a trivial local system, which cannot be isomorphic to $\cE$. A similar argument holds for $\cF_2 = j_*\cE$. We conclude that $j_*\cE$ for an irreducible KCH local system is not equivalent to any other cases in the assertion.

The argument for an irreducible unipotent KCH local system is similar.

Finally we consider $i_*\cG_\alpha$. Any two distinct $\alpha \neq \alpha'$ give non-isomorphic sheaves. It is straightforward to check they are not equivalent in the moduli set. It is neither equivalent to the push forward of an irreducible KCH nor an irreducible unipotent KCH local system by the previous argument.

We complete the proof.

\end{proof}

\subsection{Derived sheaves}\label{Sec:DerivedSheaves}
The notion of the simpleness is generally defined for an object in the derived category $Sh_{\Lambda_K}(X)$. Let $Sh^s_{\Lambda_K}(X) \subset Sh_{\Lambda_K}(X)$ be the subcategory of simple sheaves. We are able to describe the objects in this category up to finite extensions with locally constant sheaves. In fact, the moduli set of ``simple (derived) sheaves up to locally constant sheaves'', which we term as $\widehat{\cM}$, up to degree shifts is isomorphic to $\cM$.

We quickly explain the simpleness in our geometric setting. Similar to Lemma \ref{SheafRep}, a sheaf $\cF \in Sh_{\Lambda_K}(X)$ determines the following data:
\begin{enumerate}
\item
a chain complex of $k[\pi_K]$-module $V^\bullet$; and
\item
a chain complex of $k[\bbZ_K]$-module $W^\bullet$; and
\item
a chain map $T^\bullet: W^\bullet\rightarrow V^\bullet$ with compatibility conditions.
\end{enumerate}
The sheaf is simple if $cone(T^\bullet)\cong k[d]$ for some integer $d$.

\begin{rmk}
In the abelian case, a sheaf is equivalent to the list of data, while in the derived case, the sheaf contains more information. For example, the sheaf restricted to the knot complement is a locally constant sheaf, which depends on the simplicial set structure rather than just the knot group. See \cite{Tr} or \cite[Appendix A]{Lu}. 
\end{rmk}

Recall $Loc(X)\subset Sh_{\Lambda_K}(X)$ is the subcategory of locally constant sheaves. Consider the set of isomorphism classes of objects in the quotient $Sh_{\Lambda_K}(X)/Loc(X)$. Specifically, we write $\cF_1\sim \cF_2$ and $\cF_2\sim \cF_1$ for two objects $\cF_1,\cF_2\in Sh_{\Lambda_K}(X)$, if there exists a locally constant sheaf $\cL_X \in Loc(X)$ and a distinguished triangle,
$$\cF_1\rightarrow \cF_2\rightarrow \cL_X \xrightarrow{+1}.$$
Then two objects $\cF,\cF'\in Sh_{\Lambda_K}(X)$ are isomorphic in $Sh_{\Lambda_K}(X)/Loc(X)$ if there are intermediate objects $\cF_1,\dotsb, \cF_n\in Sh_{\Lambda_K}(X) $ such that
$$\cF = \cF_1 \sim \cF_2 \sim \dotsb \sim \cF_n = \cF'.$$

Since the simpleness passes to the quotient, we define $\widehat{\cM}$ to be the set of isomorphism classes of simple objects in $Sh_{\Lambda_K}(X)/Loc(X)$.

\begin{lem}\label{eqtohomology}
Suppose $\cF\in Sh_{\Lambda_K}(X)$ satisfies that $H^i\cF \in loc(X)$ for all $i\neq d$, then $\cF \cong H^d\cF[-d]$ in $Sh_{\Lambda_K}(X)/Loc(X)$.
\end{lem}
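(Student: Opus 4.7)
The plan is to isolate the single ``bad'' cohomology degree via standard truncation and then exploit the fact that $Loc(X)$ is a triangulated, extension-closed subcategory of $D^b_{\Lambda_K}(X)$.

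Let $i_0$ denote the unique degree at which $H^{i_0}\cF$ may fail to lie in $loc(X)$; for every other $i$ the shifted cohomology sheaf $H^i\cF[-i]$ belongs to $Loc(X)$. First I would show that both truncations $\tau^{\leq i_0-1}\cF$ and $\tau^{\geq i_0+1}\cF$ lie in $Loc(X)$, by induction on amplitude using the standard truncation triangles
\begin{equation*}
\tau^{\leq i-1}\cF\longrightarrow \tau^{\leq i}\cF\longrightarrow H^i\cF[-i]\xrightarrow{+1}
\end{equation*}
together with closure of $Loc(X)$ under extensions. From the triangle $\tau^{\leq i_0}\cF\to \cF\to \tau^{\geq i_0+1}\cF\xrightarrow{+1}$ I then read off $\cF\sim \tau^{\leq i_0}\cF$ in the quotient. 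Rotating the truncation triangle at level $i_0$ to
\begin{equation*}
\tau^{\leq i_0}\cF\longrightarrow H^{i_0}\cF[-i_0]\longrightarrow \tau^{\leq i_0-1}\cF[1]\xrightarrow{+1}
\end{equation*}
and using $\tau^{\leq i_0-1}\cF[1]\in Loc(X)$ gives $\tau^{\leq i_0}\cF\sim H^{i_0}\cF[-i_0]$. Finally, the split triangle
\begin{equation*}
H^{i_0}\cF[-i_0]\longrightarrow \bigoplus_i H^i\cF[-i]\longrightarrow \bigoplus_{i\neq i_0}H^i\cF[-i]\xrightarrow{+1}
\end{equation*}
has its third term in $Loc(X)$, so $H^{i_0}\cF[-i_0]\sim \bigoplus_i H^i\cF[-i]$; concatenating these equivalences produces the claimed isomorphism in $D^b_{\Lambda_K}(X)/Loc(X)$.

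The main obstacle I foresee is verification rather than conceptual: one has to check that the truncations $\tau^{\leq i}\cF$, $\tau^{\geq i}\cF$ and the shifted cohomology sheaves $H^i\cF[-i]$ all remain microsupported in $\Lambda_K$, so that the equivalence relation defining the quotient actually applies to them. This should follow from the triangle inequality for singular support applied inductively to the truncation tower, together with $\Lambda_K$ being closed and conic, but it is the only place in the argument where one departs from the purely formal framework of triangulated categories.
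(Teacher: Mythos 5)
Your proof is correct and follows essentially the same route as the paper: isolate the exceptional degree using the two truncation triangles, note that the complementary truncations lie in $Loc(X)$ because all of their cohomology sheaves do, and rotate so that the locally constant term sits in the third slot of the triangle defining the equivalence. The paper truncates from below first and leaves implicit both the final passage from $H^{d}\cF[-d]$ to $\oplus_i H^i\cF[-i]$ and the check that the truncations stay microsupported in $\Lambda_K$, but these are cosmetic differences rather than a different argument.
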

\begin{proof}
By hypothesis, $d$ is the degree where we may not have a local system. Let $\tau$ be the truncation functor. Consider the distinguished triangle
$$\tau_{< d}\,\cF \rightarrow \cF \rightarrow \tau_{\geq d}\,\cF \xrightarrow{+1}.$$
By construction, $H^i(\tau_{< d}\,\cF) = H^i(\cF)$ when $i<d$ and $H^i(\tau_{< d}\,\cF)=0$ when $i\geq d$. In either case, we have $H^i(\tau_{< d}\,\cF)\in loc(X)$. Therefore $\tau_{< d}\,\cF\in Loc(X)$, and $\cF \cong \tau_{\geq d}\,\cF$ in $Sh_{\Lambda_K}(X)/Loc(X)$.

Similarly, applying $\tau_{\leq d}\rightarrow \textrm{id}\rightarrow \tau_{>d}\xrightarrow{+1}$ to $\tau_{\geq d}\,\cF$, we show $\tau_{\leq d}\, \tau_{\geq d}\,\cF\cong \tau_{\geq d}\,\cF$ in the quotient. Since $\tau_{\leq d}\, \tau_{\geq d}\,\cF = H^d \cF[-d]$, the assertion follows.
\end{proof}

\begin{prop}\label{DerivedSheafloc} An isomorphism class in $\widehat\cM$ is represented by exactly one of the following:
\begin{enumerate}
\item
$j_*\cE[-d]$, for an irreducible KCH local system $\cE\in loc(X\setminus K)$, $d\in\bbZ$; or 
\item
$j_*\cE_u[-d]$, for an irreducible unipotent KCH local system $\cE_u\in loc(X\setminus K)$, $d\in\bbZ$; or
\item
$i_*\cG_\alpha[-d]$, for a rank $1$ local system $\cG_\alpha\in loc(K)$, $d\in\bbZ$.
\end{enumerate}
\end{prop}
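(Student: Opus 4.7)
The plan is to reduce the derived classification to the abelian one (Proposition~\ref{Sheafuptoloc}) via Lemma~\ref{eqtohomology}. Let $\cF \in D^{b,s}_{\Lambda_K}(X)$ be a simple sheaf, with associated chain data $(W^\bullet, V^\bullet, T^\bullet)$ as described just before Lemma~\ref{eqtohomology}, so that $cone(T^\bullet) \cong k[d]$ for some $d \in \bbZ$. The long exact cohomology sequence of $W^\bullet \to V^\bullet \to cone(T^\bullet) \xrightarrow{+1}$ forces $H^i(T^\bullet)$ to be an isomorphism at every degree $i$ outside a single exceptional degree $d_0$ (determined by $d$), where the map fails by a rank one kernel or a rank one cokernel.

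Next I would examine each cohomology sheaf $H^i\cF \in Mod_{\Lambda_K}(X)$: since $j^{-1}$ and $i^{-1}$ are exact, its data under Lemma~\ref{SheafRep} is $(H^iW^\bullet, H^iV^\bullet, H^iT^\bullet)$. In every degree where $H^iT^\bullet$ is an isomorphism, the compatibility (3b) of Lemma~\ref{SheafRep} forces the meridian action on $H^iV^\bullet$ to be trivial, hence the whole knot group action is trivial (as meridians are conjugate generators); together with (3a) this implies $H^i\cF \in loc(X)$, which is then trivial since $X$ is simply connected. This verifies the hypothesis of Lemma~\ref{eqtohomology}, yielding $\cF \cong H^{d_0}\cF [-d_0]$ in $D^b_{\Lambda_K}(X)/Loc(X)$. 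Moreover $H^{d_0}\cF$ is a simple abelian sheaf because the rank one cone of $H^{d_0}T^\bullet$ matches Definition~\ref{simplesheaf}.

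Finally I would apply Proposition~\ref{Sheafuptoloc} to $H^{d_0}\cF$, identifying it in $\cM$ with one of $j_*\cE$, $j_*\cE_u$, or $i_*\cG_\alpha$. The abelian equivalence relations (ER1)--(ER3) are short exact sequences involving a local system $\cL_X$, which are equally distinguished triangles in $D^b_{\Lambda_K}(X)$; hence equivalences in $\cM$ pass to the triangulated equivalence defining $\widehat\cM$. Shifting by $[-d_0]$ delivers the three cases in the statement. Distinctness within a fixed shift is inherited from Proposition~\ref{Sheafuptoloc}; across different shifts it follows because the triangulated equivalence modulo $Loc(X)$ preserves the unique cohomology degree at which a representative fails to be locally constant.

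The main obstacle I expect is the passage from the derived gluing datum $T^\bullet$ to the abelian gluing data of each $H^i\cF$. As the remark before Lemma~\ref{eqtohomology} warns, the triple $(W^\bullet, V^\bullet, T^\bullet)$ does not fully determine $\cF$, and one must check that $H^iT^\bullet$ really is the gluing morphism of $H^i\cF$ (not merely that $j^{-1}H^i\cF$ and $i^{-1}H^i\cF$ are what one expects). I would address this by a derived enhancement of Lemma~\ref{ext}, computing $RHom_X(i_*\cG^\bullet, j_!\cH^\bullet)$ through $i^{-1}Rj_*\cH^\bullet$ and tracking how the resulting spectral sequence recovers the meridian action one cohomological degree at a time, so that the induced extension class for $H^i\cF$ is genuinely controlled by $H^iT^\bullet$.
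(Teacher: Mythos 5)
Your reduction of the existence part is essentially the paper's argument: use the long exact sequence of $cone(T^\bullet)\cong k[d]$ to see that $H^iT^\bullet$ is an isomorphism outside one degree, conclude that $H^i\cF\in loc(X)$ there, invoke Lemma \ref{eqtohomology} to get $\cF\cong H^{d_0}\cF[-d_0]$ in the quotient, and then apply Proposition \ref{Sheafuptoloc}. Your treatment of that half is if anything more careful than the paper's (the paper simply asserts that taking stalks commutes with cohomology and does not worry about the derived gluing datum), and your worry in the last paragraph is legitimate but resolvable along the lines you indicate.

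The genuine gap is in the distinctness claim. You assert that ``distinctness within a fixed shift is inherited from Proposition \ref{Sheafuptoloc}'', but this is not automatic: an isomorphism in $D^b_{\Lambda_K}(X)/Loc(X)$ is a zigzag of roofs $\cF_1\leftarrow\cF\rightarrow\cF_2$ whose legs have cones in $Loc(X)$, where the intermediate objects $\cF$ need not be concentrated in a single degree. Taking cohomology of such a triangle produces a \emph{long} exact sequence, e.g.
$$0\rightarrow \cL_{d-1}\rightarrow H^d\cF\xrightarrow{u} H^d\cF_2\xrightarrow{v}\cL_d\rightarrow H^{d+1}\cF\rightarrow 0,$$
not one of the short exact sequences (ER1)--(ER3), so a priori the derived equivalence relation could identify representatives that the abelian one keeps apart. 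The paper closes this by a case analysis on $u$ and $v$ (using irreducibility of $H^d\cF_2$ and a stalk-dimension count to exclude a two-term resolution of $H^d\cF_2$ by local systems on $X$), reducing each case to a short exact sequence already handled by Proposition \ref{Sheafuptoloc}. Similarly, your one-line claim that the equivalence ``preserves the unique cohomology degree at which a representative fails to be locally constant'' needs an argument: the boundary maps in the long exact sequence could in principle smear the non-locally-constant cohomology across two adjacent degrees (subsheaves and quotients of local systems on $X$ need not be local systems, e.g.\ $j_!k_{X\setminus K}\subset k_X$). These points are fillable, but they constitute roughly half of the actual proof and cannot be dismissed as inherited from the abelian case.
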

\begin{proof}
From the earlier part of this subsection, a sheaf $\cF\in Sh^s_{\Lambda_K}(X)$ determines a chain map $T^\bullet: V^\bullet \rightarrow W^\bullet$. Taking cohomology, we get $H^iT^\bullet: H^iV^\bullet \rightarrow H^iW^\bullet$. Since $cone(T^\bullet)$ has rank $1$, there is precisely one degree such that $cone(H^iT^\bullet)$ is not zero with rank $1$.

Because taking the stalk is an exact functor, it commutes with the kernel and the cokernel, and hence commutes with the homology functor. Therefore $H^i\cF$ is equivalent to the data of a $k[\pi_K]$-module $H^iV^\bullet$, a $k[\bbZ_K]$-module $H^iW^\bullet$, and a linear transform $H^iT^\bullet: H^iV^\bullet \rightarrow H^iW^\bullet$ with compatibility conditions. What we get in the last paragraph can be rephrased as $H^i\cF$ is a simple sheaf concentrated at one degree, and zero otherwise. By Lemma \ref{eqtohomology}, $\cF \cong H^{d}\cF[-d]$ in $Sh_{\Lambda_K}(X)/Loc(X)$. 

Since $H^d\cF\in Mod^s_\Lambda(X)$ is a simple sheaf concentrated at degree zero, we can apply Proposition \ref{Sheafuptoloc} and obtain the list in the assertion. It remains to show that any two representatives are not equivalent. A morphism between $\cF_1,\cF_2 \in Sh_\Lambda(X)/Loc(X)$ is represented by a roof $\cF_1\leftarrow \cF\rightarrow \cF_2$. Suppose that we have a distinguished triangle
$$\cF\rightarrow \cF_2\rightarrow \cL_X\xrightarrow{+1},$$
where (1) $\cF\in Sh_{\Lambda_K}(X)$, (2) $\cF_2 \in Sh^s_{\Lambda_K}(X)$ is a representative in the assertation, and (3) $\cL_X\in Loc(X)$. Suppose $H^d\cF_2 \neq 0$. Taking cohomology, we get a long exact sequence in $Mod(X)$:
$$0\rightarrow \cL_{d-1} \rightarrow H^d\cF\xrightarrow{u} H^d\cF_2 \xrightarrow{v} \cL_{d}\rightarrow H^{d+1}\cF\rightarrow 0,$$
where $\cL_{d-1},\cL_{d} \in loc(X)$ are local systems on $X$. Because $H^d\cF_2$ is irreducible, we have either $\ker v$ is equal to either $0$ or $H^d\cF_2$. Similarly $\textrm{im}\,u$ is equal to either $0$ or $H^d\cF_2$.

If $H^{d}\cF$ is simple and $H^{d+1}\cF\in loc(X)$, we claim that $v=0$. Suppose otherwise, $v$ must be injective since $\ker v$ is equal to either $0$ or $H^d\cF_2$. In particular, there is a short exact sequence,
$$0\rightarrow H^d\cF_2 \xrightarrow{v} \cL_{d}\rightarrow H^{d+1}\cF\rightarrow 0.$$
Note that $\cL_{d}$ and $H^{d+1}\cF$ are local systems on $X$. The short exact sequence cannot hold, because any representative in the assertion does not fit into a two-term resolution by local systems on $X$. We can see a contradiction that the dimension will not match after we take the stalk at a point either on the knot or on the knot complement. Following the claim that $v=0$, we have a short exact sequence $0\rightarrow \cL_{d-1} \rightarrow H^d\cF\xrightarrow{u} H^d\cF_2\rightarrow 0$, and then the argument reduces to the abelian case in Proposition \ref{Sheafuptoloc}.

If $H^{d+1}\cF$ is simple and $H^d\cF\in loc(X)$, then $v\neq 0$ (because otherwise $\cL_d\cong H^{d+1}\cF$, a contradiction) and $u=0$ (because $\ker v =\textrm{im}\, u$). We have a short exact sequence $0\rightarrow H^d\cF_2 \xrightarrow{v} \cL_{d}\rightarrow H^{d+1}\cF\rightarrow 0$, again studied in Proposition \ref{Sheafuptoloc}.

In either case, we have verified that if $\cF\rightarrow \cF_2$ is an isomorphism in the quotient category, then the two sheaves must have the same representative. The same arguments hold for $\cF \rightarrow \cF_1$. Hence no pair of the sheaves in the list are equivalent. We complete the proof.
\end{proof}

\begin{cor}\label{moduliquotient}
Let $\bbZ$ acts on $\widehat{\cM}$ by degree shift, then $\widehat{\cM}/\bbZ \cong \cM$.
\end{cor}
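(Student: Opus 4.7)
The plan is to exploit the two explicit classifications already available: Proposition \ref{Sheafuptoloc} lists canonical representatives for $\cM$, while Proposition \ref{DerivedSheafloc} lists canonical representatives for $\widehat{\cM}$. Comparing these two lists, the derived representatives are precisely the degree shifts $\cF_0[-d]$ of the abelian representatives $\cF_0$, with an extra parameter $d\in\bbZ$. Since the $\bbZ$-action by degree shift on $\widehat{\cM}$ sends $\cF_0[-d]$ to $\cF_0[-d-n]$, the extra parameter is exactly what gets quotiented away. So the bijection should be implemented by stripping off degree shifts.

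Concretely, I would define a map $\Phi : \cM \to \widehat{\cM}/\bbZ$ sending $[\cF]_\cM$ (for an abelian sheaf $\cF\in Mod^s_{\Lambda_K}(X)$) to the class of $\cF$ viewed in degree $0$. To check well-definedness, it suffices to verify each generating relation (ER1)--(ER3) in the abelian setting: a short exact sequence $0\to\cA\to\cB\to\cC\to 0$ of abelian sheaves induces a distinguished triangle $\cA\to\cB\to\cC\xrightarrow{+1}$, which is exactly the relation used to define $\widehat{\cM}$. Surjectivity is then immediate from Proposition \ref{DerivedSheafloc}: every class in $\widehat{\cM}$ has a representative $\cF_0[-d]$ with $\cF_0$ one of the three abelian sheaves in Proposition \ref{Sheafuptoloc}, and applying the $\bbZ$-action shows this class equals $[\cF_0[0]]$ in the quotient, which lies in $\textrm{im}\,\Phi$.

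For injectivity, suppose two canonical abelian representatives $\cF, \cF'$ from Proposition \ref{Sheafuptoloc} satisfy $\Phi([\cF]_\cM) = \Phi([\cF']_\cM)$, so that there is some $n\in\bbZ$ with $\cF[0]$ and $\cF'[-n]$ representing the same class in $\widehat{\cM}$. The uniqueness statement at the end of the proof of Proposition \ref{DerivedSheafloc}, which is exactly the verification that distinct derived representatives at (possibly distinct) degrees cannot be identified, forces $n = 0$ and $\cF = \cF'$ as sheaves in the list. Hence $[\cF]_\cM = [\cF']_\cM$.

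The main obstacle, conceptually, is injectivity: one must rule out that the derived equivalence relation (via distinguished triangles that may involve non-concentrated intermediate complexes) introduces identifications invisible in $\cM$. This does not actually require new work here, as the non-equivalence argument in Proposition \ref{DerivedSheafloc} already addressed it by passing to cohomology in each degree and appealing to the abelian non-equivalence statements of Proposition \ref{Sheafuptoloc}. So the bulk of the proof reduces to neatly packaging what was established there.
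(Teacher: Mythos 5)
Your argument is correct and is essentially the paper's own (very terse) proof, which simply cites Proposition \ref{Sheafuptoloc} and Proposition \ref{DerivedSheafloc}; you have just unpacked the comparison of the two lists of representatives and the well-definedness/injectivity checks explicitly. The only point worth flagging is that for (ER2) the short exact sequence $0\to\cF_1\to\cL_X\to\cF_2\to 0$ rotates to a triangle $\cF_2[-1]\to\cF_1\to\cL_X\xrightarrow{+1}$, so it identifies $\cF_1$ with $\cF_2[-1]$ rather than with $\cF_2$ in $\widehat{\cM}$ -- harmless here since your map lands in the quotient $\widehat{\cM}/\bbZ$, and consistent with the degree-shift remark the paper makes right after this corollary.
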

\begin{proof}
It follows from Proposition \ref{Sheafuptoloc} and Proposition \ref{DerivedSheafloc}.
\end{proof}
Since the action is free, we can rewrite the relation as:
$$\widehat{\cM} = \cM \times \bbZ.$$

\section{Augmentations}\label{SectionAugs}

We introduce augmentations in this section. In the standard context, augmentations are defined based on the Legendrian contact dga, whose coefficient ring contains the full second relative homology class. However in the specialized version that we will discuss, it suffices to just introduce the notion of the (framed) cord algebra. The definition was first introduced in \cite{Ng3}. We adopt the convention in a later update \cite{Ng5}.

\subsection{Augmentations}\label{Sec:Augs}

The framing of an oriented knot is a choice of generators of $H_1(\del n(K))$. Suppose $\lambda,\mu$ are the classes of the longitude $\ell$ and a preferred meridian $m$, we can identify $\bbZ [H_1(\del n(K))]$ with $\bbZ[\lambda^{\pm1}, \mu^{\pm1}]$.

The \textit{framed cord algebra} $\cP_K$ of $K$ is the tensor algebra over $\bbZ[\lambda^{\pm1}, \mu^{\pm1}]$ freely generated by the elements in $\pi_K$, modulo the relations:
\begin{enumerate}
\item
$[e] = 1- \mu$;
\item
$[\gamma m] = [m\gamma] = \mu[\gamma]$ and $[\gamma \ell] = [\ell \gamma] = \lambda[\gamma]$, for $\gamma\in \pi_K$;
\item 
$[\gamma_1\gamma_2] - [\gamma_1 m\gamma_2] - [\gamma_1][\gamma_2] =0$, for $\gamma_1,\gamma_2 \in \pi_K$,
\end{enumerate}
where $e\in \pi_K$ is the identity element, and $[\gamma]$ is a generator of $\cP_K$ for any $\gamma\in \pi_K$.

\begin{defn}\label{AugDef}
An \textit{augmentation} is a unit-preserving algebra homomorphism $$\epsilon: \cP_K \rightarrow k.$$

Equivalently, one can define an augmentation by assigning to each generator in $\cP_K$ an element in $k$, preserving the relations above. Explicitly, it means:
\begin{enumerate}
\item (Normalization) $\epsilon([e])=1-\epsilon(\mu)$;
\item (Meridian) $\epsilon([m\gamma])=\epsilon([\gamma m]) =\epsilon(\mu[\gamma]) $;
\item (Longitude) $\epsilon([\ell\gamma])=\epsilon([\gamma \ell]) =\epsilon(\lambda[\gamma])$;
\item (Skein relations) $\epsilon([\gamma_1\gamma_2]) - \epsilon([\gamma_1 m \gamma_2])= \epsilon([\gamma_1])\epsilon([\gamma_2])$;
\end{enumerate}
for any $\gamma,\gamma_1,\gamma_2\in \pi_K$.
\end{defn}

\begin{rmk}\label{rmkAug}
Some of the relations in Definition \ref{AugDef} are redundant.
\begin{enumerate}
\item
The meridian relations are always satisfied, as long as the normalization and skein relations are satisfied. Taking $\gamma_1 = e$, $\gamma_2 = \gamma$ in the skein relation, we have
$$\epsilon([\gamma]) - \epsilon([m\gamma]) = \epsilon([e])\epsilon([\gamma]) = (1-\epsilon(\mu))\epsilon([\gamma]),$$
with the first equality by the skein relation and the second equality by the normalization. Organizing the terms we get $\epsilon([m\gamma]) = \epsilon(\mu[\gamma])$. Similarly we have $\epsilon([\gamma m ]) = \epsilon([\gamma] \mu) = \epsilon(\mu [\gamma])$.

\smallskip
\item The longitude relations are reduced in some cases.

If $\epsilon(\mu)\neq 1$ (or $\epsilon([e])\neq 0$ by normalization), then $\epsilon([\ell\gamma])=\epsilon([\gamma \ell])$ is automatically satisfied. Set $\gamma_1 = \ell$, $\gamma_2 = \gamma$ in the skein relation:
$$\epsilon([\ell \gamma]) - \epsilon([\ell m \gamma]) = \epsilon([\ell])\epsilon([\gamma]).$$
Because $\ell$ and $m$ commute, $\epsilon([\ell m \gamma]) = \epsilon([m \ell \gamma])$, which further equals $\epsilon(\mu [\ell\gamma] )$ by the meridian relation. Organizing the terms, we have
\begin{equation}\label{Long1}
(1 - \epsilon(\mu))\epsilon([\ell\gamma]) = \epsilon([\ell])\epsilon([\gamma]).
\end{equation}
Similarly we compute 
$(1 - \epsilon(\mu))\epsilon([\gamma\ell]) = \epsilon([\gamma])\epsilon([\ell])$. Since $1-\epsilon(\mu)\neq 0$, we have $\epsilon([\ell\gamma]) = (1 - \epsilon(\mu))^{-1}\epsilon([\ell])\epsilon([\gamma]) = \epsilon([\gamma\ell])$. The assertion is verified.

Furthermore, if $\epsilon(\mu) \neq 1$ and $\epsilon([\ell]) = \epsilon(\lambda)\epsilon([e])$, then we claim that
$$\epsilon([\ell\gamma]) =\epsilon(\lambda[\gamma]) \quad \textrm{for all}\; \gamma\neq e$$
as well. By the normalization and this hypothesis, (\ref{Long1}) implies $\epsilon([e])\epsilon([\ell\gamma]) = \epsilon([\ell])\epsilon([\gamma]) = \epsilon(\lambda)\epsilon([e])\epsilon([\gamma])$. Cancelling $\epsilon([e])$ (which is nonzero by assumption), we verify the desired assertion.
\end{enumerate}
\end{rmk}

\smallskip

We will define a map which sends a simple sheaf concentrated at degree zero to an augmentation.

Recall that $\cS$ is the set of isomorphism classes of simple abelian sheaves.  Also recall that a sheaf $\cF \in Mod_{\Lambda_K}(X)$ determines two representations:
\begin{enumerate}
\item
a representation $\rho :\pi_K\rightarrow GL(V)$, and
\item
a representation $\rho': \bbZ_K\rightarrow GL(W)$.
\end{enumerate}

\begin{defn}
Given a sheaf $\cF \in Mod_{\Lambda_K}(X)$, we define $\epsilon_\cF: \cP_K\rightarrow k$ by
\begin{align*}
\epsilon_\cF(\lambda) &= \textrm{tr}(\rho(\ell)) - \textrm{tr}(\rho'(K)); \\
\epsilon_\cF(\mu) &= \textrm{tr}(\rho(m)) -\dim V + 1; \\
\epsilon_\cF([\gamma]) &= \textrm{tr}\left(\rho(\gamma) -\rho(m\gamma) \right),
\end{align*}
where $\textrm{tr}$ stands for the trace of an operator.
\end{defn}

\begin{thm}\label{sheaftoaug}
If $\cF\in \cS$, then $\epsilon_\cF$ is an augmentation.
\end{thm}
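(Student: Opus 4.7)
The plan is to verify the four defining relations of Definition \ref{AugDef} one at a time. By Remark \ref{rmkAug} the meridian relations are a formal consequence of normalization and skein, so only three families need checking: normalization, skein, and longitude.

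The normalization $\epsilon_\cF([e]) = 1 - \epsilon_\cF(\mu)$ is immediate from unpacking the definitions: both sides equal $\dim V - tr(\rho(m))$, and this step uses nothing about simplicity.

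The engine driving the remaining verifications is the claim that simplicity forces the operator $A := I_V - \rho(m)$ to have $\mathrm{rank}\,A \leq 1$. I plan to read this off the two cases of Definition \ref{simplesheaf}: in the injective case, Lemma \ref{SheafRep}(3b) places $\mathrm{im}\,T$ inside $V^{\langle m\rangle}$ as a codimension $\leq 1$ subspace, bounding $\mathrm{rank}\,A \leq 1$; in the surjective case, $V = \mathrm{im}\,T$ is pointwise fixed by $\rho(m)$ and so $A = 0$. Granting this, the skein relation reduces to the algebraic expansion $\epsilon_\cF([\gamma_1\gamma_2]) - \epsilon_\cF([\gamma_1 m \gamma_2]) = tr(A\,\rho(\gamma_1)\,A\,\rho(\gamma_2))$ combined with the rank-one identity $tr(ABAC) = tr(AB)\,tr(AC)$ valid for any operator $A$ of rank at most one (and trivial when $A = 0$).

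The longitude relation $\epsilon_\cF([\ell\gamma]) = \epsilon_\cF([\gamma\ell]) = \epsilon_\cF(\lambda)\,\epsilon_\cF([\gamma])$ is the step I expect to be the main obstacle. Since $\ell$ and $m$ commute, $\rho(\ell)$ commutes with $A$; writing $A = uv^T$ when nonzero, commutation produces a common scalar $\lambda_0$ with $\rho(\ell)u = \lambda_0 u$ and $v^T\rho(\ell) = \lambda_0 v^T$, which yields at once $\epsilon_\cF([\ell\gamma]) = \lambda_0\,\epsilon_\cF([\gamma])$ and the symmetric identity for $\epsilon_\cF([\gamma\ell])$. The delicate part is matching this $\lambda_0$ with the a priori unrelated quantity $\epsilon_\cF(\lambda) = tr(\rho(\ell)) - tr(\rho'(K))$. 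For this I plan to use the classification of meridian actions in Lemma \ref{StdMatrix}: in both the diagonalizable and the unipotent cases, $\rho(m)$ acts as the identity on the one-dimensional quotient $V/W$ with $W := V^{\langle m\rangle}$, so $A$ descends through $V/W$ to an isomorphism $\bar A : V/W \xrightarrow{\sim} \mathrm{im}\,A$ intertwining the $\rho(\ell)$-actions; hence $\rho(\ell)$ acts on $V/W$ by the same scalar $\lambda_0$. Combining this with the identification $\rho(\ell)|_W = \rho'(K)$ via $T$ from Lemma \ref{SheafRep}(3a), additivity of trace on $0 \to W \to V \to V/W \to 0$ delivers $tr(\rho(\ell)) - tr(\rho'(K)) = \lambda_0$, as required. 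When $A = 0$ the longitude identity reduces to $0 = \epsilon_\cF(\lambda)\cdot 0$ and needs no further argument.
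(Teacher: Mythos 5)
Your argument is correct, and it takes a genuinely different route from the paper's. The paper verifies the skein and longitude relations case by case against the classification in Theorem \ref{Sheafclassify}, writing out explicit block matrices for $\rho(m)$, $\rho(\ell)$ and $\rho'(K)$ in the KCH and unipotent KCH cases separately. You instead extract the single structural fact that simplicity forces $\mathrm{rank}(I_V-\rho(m))\leq 1$ directly from Definition \ref{simplesheaf} and Lemma \ref{SheafRep}(3b); for the skein relations this is exactly the streamlining recorded in the remark (due to Guillermou) following the paper's proof, but your treatment of the longitude relation --- commutation of $\rho(\ell)$ with the rank-one operator $A=uv^T$ producing a scalar $\lambda_0$, the induced isomorphism $V/\ker A\cong \mathrm{im}\,A$, the identification $\rho(\ell)|_{\mathrm{im}\,T}\cong\rho'(K)$ via (3a), and trace additivity --- goes beyond that remark and replaces the paper's matrix computations with a uniform, basis-free argument. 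What your approach buys is that it never invokes the classification theorem at all; what the paper's approach buys is that the same case analysis is reused immediately afterwards for Proposition \ref{factorthrough} and the correspondence table.

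One statement in your longitude step is false as written: in the diagonalizable (KCH) case $\rho(m)$ acts on the one-dimensional quotient $V/W$, $W=V^{\langle m\rangle}=\ker A$, by the scalar $\mu_0\neq 1$, not by the identity (take $\rho(m)=\mathrm{diag}(\mu_0,1,\dots,1)$). Fortunately this premise is not what makes $A$ descend: $A$ factors through $V/W$ simply because $W=\ker A$, and the induced map onto $\mathrm{im}\,A$ is an isomorphism by the first isomorphism theorem, intertwining the $\rho(\ell)$-actions because $\rho(\ell)$ commutes with $A$. So the claim should just be deleted; nothing downstream depends on it. You should also record explicitly, as you implicitly use, that when $A\neq 0$ the two codimension-one subspaces $\mathrm{im}\,T\subseteq\ker A$ coincide, which is what lets you transport $\rho'(K)$ to $\rho(\ell)|_{\ker A}$ before applying trace additivity.
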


\begin{proof} 
We will go through Definition \ref{AugDef}. To verify the normalization, we compute
$$\epsilon_\cF([e]) = \textrm{tr} (\rho(e) -\rho(m)) = \textrm{tr} (\textrm{id}_V -\rho(m)) = \dim V - \textrm{tr} (\rho(m)),$$
and
$$\epsilon_\cF(1-\mu) = 1 -\epsilon_\cF(\mu) = 1- (\textrm{tr}(\rho(m)) -\dim V +1) = \dim V - \textrm{tr} (\rho(m)).$$
The two sides equal.

By Remark \ref{rmkAug} (1), the meridian relations always hold if we can show that the skein relations are satisfied.

To see the longitude and skein relations, we apply Theorem \ref{Sheafclassify} and verify them case by case. It is straightforward to check that a direct summand with $\cL_X$ does not change the augmentation. In other words, if $\cF_1 = \cF_2 \oplus \cL_X$, then $\epsilon_{\cF_1} = \epsilon_{\cF_2}$. We assume $\cL_X = 0$ in all the cases of Theorem \ref{Sheafclassify}.

In either case (1) or (5) of Theorem \ref{Sheafclassify}, namely $\cF \cong j_!k_{X\setminus K}$ or $\cF$ admits the short exact sequence $0\rightarrow k_K\rightarrow \cF\rightarrow k_X\rightarrow0$, $(\rho, V)$ is a constant rank $1$ representation. We have $\epsilon_\cF([\gamma]) = 1- 1= 0$ for any $\gamma\in \pi_K$. The longitude relation and the skein relations are satisfied.

In case (4) of Theorem \ref{Sheafclassify}, $\cF = i_*\cG_\alpha$ for a rank $1$ local system $\cG_\alpha$ on $K$. By construction we have $V = 0$. Therefore $\epsilon_\cF([\gamma]) = 0$, and the longitude and skein relations follow.

Finally in case (2) and (3) of Theorem \ref{Sheafclassify}. We have $\cF = j_*\cE$ for a KCH or unipotent local system. By Lemma \ref{StdMatrix}, there is a basis of $V$ such that $M := \textrm{id}_V - \rho(m)$ equals to
\begin{itemize}
\item
$cE_{11}$ for some $c\neq 0,-1$, if $\cE$ is a KCH local system; or
\item
$E_{12}$, if $\cE$ is a unipotent KCH local system.
\end{itemize}
We check the skein relations. Let $A = \rho(\gamma_1)$ and $B= \rho(\gamma_2)$, then
\begin{align*}
\epsilon_\cF([\gamma_1\gamma_2]) - \epsilon_\cF([\gamma_1 m \gamma_2]) &= \textrm{tr}(\rho(\gamma_1\gamma_2) - \rho(m\gamma_1\gamma_2)) - \textrm{tr}( \rho(\gamma_1 m \gamma_2) - \rho(m\gamma_1 m \gamma_2)), \\
	&= \textrm{tr}\big((\textrm{id}_V- \rho(m))\rho(\gamma_1)(\textrm{id}_V- \rho(m))\rho(\gamma_2)\big) = \textrm{tr}(MAMB).
\end{align*}
and 
\begin{align*}
\epsilon_\cF([\gamma_1])\epsilon_\cF([\gamma_2]) = \textrm{tr}\big((\textrm{id}_V- \rho(m))\rho(\gamma_1)\big)\textrm{tr}\big((\textrm{id}_V- \rho(m))\rho(\gamma_2)\big) = \textrm{tr}(MA)\textrm{tr}(MB).
\end{align*}
If $\cE$ is a KCH local system, both equal to $c^2A_{11}B_{11}$. If $\cE$ is a unipotent KCH local system, both equal to $A_{21}B_{21}$. The skein relations are verified.

To see the longitude relation. If $M = cE_{11}$ with $c\neq 0, -1$, then $\epsilon_{\cF}(\mu) \neq 1$. By Remark \ref{rmkAug} (2), it suffices to check $\epsilon_\cF([\ell]) = \epsilon_\cF(\lambda)\epsilon_\cF([e])$. By definition,
$$\epsilon_\cF([\ell]) = \textrm{tr}(\rho(\ell)- \rho(m\ell)) = \textrm{tr}(\rho(\ell)(\textrm{id}_V-\rho(m))),$$
and 
$$\epsilon_\cF(\lambda)\epsilon_\cF([e]) = (\textrm{tr}(\rho(\ell))-\textrm{tr}(\rho'(K)))\textrm{tr}(\textrm{id}_V-\rho(m)).$$
By Remark \ref{KCHRmk}, and that $\cF = j_*\cE$, we have
$$\rho(m) = \begin{pmatrix} \mu_0 & \\ & I_{n-1} \end{pmatrix},
\quad
\rho(\ell) = \begin{pmatrix} \lambda_0 & \\ &\ast_{n-1} \end{pmatrix},
\quad \rho'(K) = \begin{pmatrix} \ast_{n-1} \end{pmatrix}.
$$
Both hand sides equal to $\lambda_0(1-\mu_0)$.

If $M = E_{12}$, then we use the fact that $\ell$ and $m$ commute to obtain
$$\rho(m) = \begin{pmatrix} 1 & 1 & \\ & 1&  \\ & & I_{n-2} \end{pmatrix},
\quad
\rho(\ell) = \begin{pmatrix} a & b&  \\ & a& \\ & &\ast_{n-2} \end{pmatrix},
\quad \rho'(K) = \begin{pmatrix} a & \\ &\ast_{n-2} \end{pmatrix}.
$$
Note that if $\{v_1,\dotsb, v_n\}$ is the basis for $V$, then $W\subset V$ has the basis $\{v_1, \hat{v}_2, v_3,\dotsb,v_n\}$. Let $M : = \textrm{id}_V-\rho(m)$, $C:= \rho(\gamma)$ for a choice $\gamma\in\pi_K$. Then
$$\epsilon_\cF([\gamma\ell]) = \textrm{tr}(\rho(\gamma\ell)- \rho(m \gamma \ell)) = \textrm{tr} (MC\rho(\ell)) = -aC_{21},$$
and
\begin{align*}
\epsilon_\cF([\ell\gamma]) &= \textrm{tr}(\rho(\ell\gamma)- \rho(m \ell \gamma )) \\
&= \textrm{tr}(\rho(\ell\gamma)- \rho(\ell m  \gamma )) \qquad [\textrm{Since } \ell \textrm{ and } m \textrm{ commute}] \\
&= \textrm{tr} (\rho(\ell)MC) = -aC_{21},
\end{align*}
and
$$\epsilon_\cF(\lambda)\epsilon_{\cF}([\gamma]) = \big(\textrm{tr} (\rho(\ell)) - \textrm{tr} (\rho'(K))\big) \textrm{tr}(MC) = -aC_{21}.$$

We have verified all the relations in all cases. The proof is complete.

\end{proof}

\begin{rmk}
St\'ephane Guillermou suggested the following improvement of the proof, on how to verify the skein relations.  The simpleness, by Definition \ref{simplesheaf}, yields for the first case,
$$0\rightarrow W\xrightarrow{T} V\xrightarrow{u} k\rightarrow 0.$$
Note that $\textrm{id}_V-\rho(m)\in \textrm{Hom}(V,V)$ is an endomorphism. Recall from Lemma \ref{SheafRep} (3b) that $m$ acts on $W$ as identity, hence $(\textrm{id}_V-\rho(m))\circ T =0$. It further implies that $\textrm{id}_V-\rho(m)\in \textrm{Hom}(V,V)$ factors through $k$. Namely, there is some morphism $a\in \textrm{Hom}(k,V)$, such that $\textrm{id}_V-\rho(m) = p\circ a$. Therefore $M:= \textrm{id}_V -\rho(m)$ has rank $1$, which leads to $\textrm{tr}(MAMB) = \textrm{tr}(MA)\textrm{tr}(MB)$.

For the second case when
$$0\rightarrow k \rightarrow W\rightarrow V\rightarrow 0,$$
we have $\textrm{id}_V-\rho(m) = 0$ by Lemma \ref{SheafRep} (3b). Then $\epsilon_\cF([\gamma]) = 0$. The skein relations follow.

\end{rmk}

Let $\cA ug$ be the set of augmentations. By sending $\cF$ to $\epsilon_\cF$, we have defined a map 
$$\cS\rightarrow \cA ug.$$
The map descends to the moduli set $\cM$, in the sense that we fix a representative in each isomorphism class, for example the representatives in Proposition \ref{Sheafuptoloc}. For other representatives in the same isomorphism class, sometimes there is a sign ambiguity, which we discuss in the following remark.

\begin{rmk}\label{modulisheaftoaugrmk}
Given a local system $\cL_X\in loc(X)$, one can check that the associated map $\epsilon_{\cL_X}$ is zero for all elements in $\pi_K$. For any $\gamma\in \pi_K$, if $\cF_1 \sim \cF_2$ by (ER1) and (ER3), then $\epsilon_{\cF_1}([\gamma]) = \epsilon_{\cF_2}([\gamma])$, and if they are related by (ER2), then $\epsilon_{\cF_1}([\gamma]) = -\epsilon_{\cF_2}([\gamma])$. For $\mu$ and $\lambda$, $\epsilon(\mu)$ is determined by $\epsilon([e])$ according to the normalization, $\epsilon(\lambda)$ remains the same under (ER1) and (ER3), and negates under (ER2). In fact, the only chance we need to apply (ER2) is the following short exact sequence, or its variation by adding a local system $\cL_X$,
$$0\rightarrow j_!k_{X\setminus K}\rightarrow k_X \rightarrow i_*k_{K}\rightarrow 0.$$
For the two simples sheaves here, $\epsilon([\gamma]) = 0$ for all $\gamma\in \pi_K$.
\end{rmk}

The sign can be better understood and taken care of if we work with simple sheaves in the derived cateogry. Suppose $V^\bullet$ is a chain complex and $T^\bullet: V^\bullet\rightarrow V^\bullet$ is a chain map. For each $n$, there is a linear endomorphism $H^nT^\bullet: H^nV^\bullet\rightarrow H^nV^\bullet$ of the $n$-th homology vector space. We define
$$\textrm{tr}^\bullet(T^\bullet) = \sum_{n\in\bbZ} (-1)^n\, \textrm{tr}(H^nT^\bullet).$$
In some contexts, $\textrm{tr}^{\bullet}$ is referred to as the supertrace. Recall from Section \ref{Sec:DerivedSheaves} that an object $\cF\in Sh_{\Lambda_K}(X)$ gives the following data: (1) a chain complex of $k[\pi_K]$-module $V^\bullet$ (the $\pi_K$-action given by $\rho$), (2) a chain complex of $k[\bbZ_K]$-module $W^\bullet$ (the $\bbZ_K$-action given by $\rho'$), and (3) a chain map $T^\bullet: W^\bullet\rightarrow V^\bullet$. The simpleness of $\cF$ requires $cone(T^\bullet) \cong k[d]$ for some integer $d$. We define $\epsilon_\cF$ to be:
\begin{align*}
\epsilon_\cF(\lambda) &= (-1)^d(\textrm{tr}^\bullet(\rho(\ell)) - \textrm{tr}^\bullet(\rho'(K))); \\
\epsilon_\cF(\mu) &= (-1)^d\,\textrm{tr}^\bullet(\rho(m) -\textrm{id}_{V^\bullet}) + 1; \\
\epsilon_\cF([\gamma]) &= (-1)^d\,\textrm{tr}^\bullet \left(\rho(\gamma) -\rho(m\gamma) \right).
\end{align*}
It is straightforward to check that shifting the degree of a sheaf does not change the associated map, namely $\epsilon_{\cF}= \epsilon_{\cF[1]}$. This makes sense. The framed cord algebra models the degree zero homology of the dga of the knot conormal. The definition of the dga depends on a choice of the Maslov potential, whose sheaf counterpart is the homological degree. Since the knot conormal is connected, different choices of Maslov potentials give identical dgas and augmentations, and so should different choices of homological degrees of sheaves.

\begin{prop}\label{Dsheaftoaug}
If $\cF\in \widehat{\cM}$, then $\epsilon_{\cF}$ is an augmentation.
\end{prop}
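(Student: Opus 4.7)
The strategy is to reduce to the simple abelian case of Theorem \ref{sheaftoaug} via Proposition \ref{DerivedSheafloc}, then track the effect of a degree shift on the alternating trace $tr^\bullet$.

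By Proposition \ref{DerivedSheafloc}, every isomorphism class in $\widehat{\cM}$ has a representative of the form $\cF = \cG[-d]$, where $\cG$ is one of the three simple abelian sheaves from Proposition \ref{Sheafuptoloc} and $d \in \bbZ$. For such $\cF$, the chain complexes $V^\bullet, W^\bullet$ produced by the derived analogue of Lemma \ref{SheafRep} have cohomology concentrated in degree $d$, with $H^d V^\bullet = V_\cG$ and $H^d W^\bullet = W_\cG$; for any operator $A$ induced from $\rho$ or $\rho'$ (or a difference thereof), the action on cohomology vanishes outside degree $d$ and recovers the abelian-case action there. Hence $tr^\bullet(A) = (-1)^d \, tr_\cG(A)$, which yields
\begin{align*}
\epsilon_\cF(\lambda) &= (-1)^d \, \epsilon_\cG(\lambda), \\
\epsilon_\cF(\mu) - 1 &= (-1)^d \, (\epsilon_\cG(\mu) - 1), \\
\epsilon_\cF([\gamma]) &= (-1)^d \, \epsilon_\cG([\gamma]).
\end{align*}

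Using these identities, I would verify the axioms of Definition \ref{AugDef} for $\epsilon_\cF$ by drawing on Theorem \ref{sheaftoaug} applied to $\cG$. The normalization $\epsilon([e]) = 1 - \epsilon(\mu)$ is immediate because both sides scale by the same $(-1)^d$ under the twist. For the representative $i_*\cG_\alpha[-d]$ the stalk $V$ vanishes, so $\epsilon_\cF([\gamma]) \equiv 0$ and the meridian, longitude, and skein relations reduce trivially to $0 = 0$. For the KCH and unipotent KCH representatives, the essential tool is the rank-one form of $M := Id_V - \rho(m)$ supplied by Lemma \ref{StdMatrix}, which drives the matrix identity $tr(MAMB) = tr(MA) \, tr(MB)$ already used in the proof of Theorem \ref{sheaftoaug}. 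This identity yields the skein relation on the sign-twisted values, while the meridian and longitude relations follow by combining Remark \ref{rmkAug} with the simultaneous normal form of $\rho(m)$ and $\rho(\ell)$ in Remark \ref{KCHRmk}.

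The main obstacle is the sign bookkeeping when $d$ is odd. The skein relation is quadratic in $\epsilon([\gamma])$, so its right-hand side picks up $(-1)^{2d} = 1$ while its left-hand side picks up only $(-1)^d$; the longitude relation exhibits a similar asymmetry between the product $\epsilon(\lambda) \epsilon([\gamma])$ and the monomial $\epsilon([\ell \gamma])$. Reconciling these signs is where the argument will be most delicate, and one expects to lean on the explicit matrix structure of $\rho(m)$, $\rho(\ell)$, and $\rho'(K)$ for each of the three representative types from Proposition \ref{DerivedSheafloc}, perhaps absorbing sign discrepancies into redefinitions that respect the $\bbZ$-action of Corollary \ref{moduliquotient}.
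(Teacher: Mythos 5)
Your overall strategy --- reduce to the abelian representatives of Proposition \ref{DerivedSheafloc} and invoke Theorem \ref{sheaftoaug} --- is exactly the paper's, but the proposal stops short of a proof in two places. First, you verify the relations only for the chosen representative $\cG[-d]$; since an element of $\widehat{\cM}$ is an equivalence class, you must also check that $\epsilon_\cF$ is unchanged along the defining relations, i.e.\ that a distinguished triangle $\cF_1\rightarrow\cF_2\rightarrow\cL_X\xrightarrow{+1}$ with $\cL_X\in Loc(X)$ forces $\epsilon_{\cF_1}=\epsilon_{\cF_2}$. This is the first (and easy) step of the paper's proof: $tr^\bullet$ is additive over distinguished triangles, and on a locally constant sheaf on the simply connected $X$ each of the three defining expressions ($\rho(\gamma)-\rho(m\gamma)$, $\rho(\ell)-\rho'(K)$, $\rho(m)-Id$) contributes zero.

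Second, and more seriously, you correctly isolate the $(-1)^d$ sign but then leave it unresolved, and it cannot simply be ``absorbed into redefinitions'': for $d$ odd the induced change $\epsilon\mapsto-\epsilon$ on the $[\gamma]$'s (together with $\epsilon(\mu)\mapsto 2-\epsilon(\mu)$) genuinely destroys the quadratic skein relation. For instance, for $\cF=j_*\cE[-1]$ with $\cE$ an irreducible KCH local system one computes $\epsilon_\cF([e])-\epsilon_\cF([m])=-(1-\mu_0)^2$ while $\epsilon_\cF([e])^2=(1-\mu_0)^2$, and $\mu_0\neq 1$. So the verification only goes through once the representative is placed in a fixed (effectively even, say degree-zero) position; this is how the paper uses the statement, reducing directly to the abelian case of Theorem \ref{sheaftoaug} and recording the degree/sign ambiguity separately in the Corollary that follows and in Remark \ref{modulisheaftoaugrmk} (where the odd shift coming from (ER2) is exactly what produces $\epsilon\mapsto-\epsilon$). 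As written, your argument neither fixes the degree nor proves the relations for odd $d$, so it does not establish the statement; you should either normalize the representative to degree zero before invoking Theorem \ref{sheaftoaug}, or weaken the conclusion to ``an augmentation up to sign.''
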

\begin{proof}
Suppose there is a distinguished triangle,
$$\cF_1\rightarrow \cF_2\rightarrow \cL_X\xrightarrow{+1},$$
then $\epsilon_{\cF_1} = \epsilon_{\cF_2}$. Hence it suffices to check for the representatives in each isomorphism classes. Because shifting the degree does not change the associated map, by Proposition \ref{DerivedSheafloc}, it reduces to the simple sheaves concentrated at degree zero. By Theorem \ref{sheaftoaug}, we see that $\epsilon_\cF$ is a well-defined augmentation.
\end{proof}

\begin{cor}
There is a well-defined map
$$\cM\rightarrow \cA ug,$$
by sending $\cF\mapsto \epsilon_\cF$.
\end{cor}
\begin{proof}
It follows from Proposition \ref{Dsheaftoaug}, Corollary \ref{moduliquotient}, and the fact that shifting the degree of a simple sheaf does not change the associated augmentation.
\end{proof}
\begin{rmk}
Responding to the sign ambiguity in the previous Remark \ref{modulisheaftoaugrmk}, the short exact sequence $0\rightarrow \cF_1 \rightarrow \cL_X\rightarrow \cF_2\rightarrow 0$ in (ER2) can be rewritten as a distinguished triangle
$$\cL_X\rightarrow \cF_2\rightarrow \cF_1[1]\xrightarrow{+1}.$$
Therefore we have $\epsilon_{\cF_2} =\epsilon_{\cF_1[1]} =\epsilon_{\cF_1}$. The sign ambiguity does not exist any more. 
\end{rmk}

\subsection{KCH representations and augmentations} \label{Sec:KCHAug}

Recall from Section \ref{Sec:KCHRep} that a KCH representation is a representation of the knot group $\rho: \pi_K\rightarrow GL(V)$ such that the meridian action is diagonalizable and equals to identity on a subspace of exact codimension $1$. Suppose $m$ is a meridian and $\ell$ is the longitude. An eigenvector of the eigenvalue $\mu_0$ in $\rho(m)$ is also an eigenvector of $\rho(\ell)$, corresponding to the eigenvalue $\lambda_0$.

Ng defined an augmentation $\epsilon$ from the KCH representation by assignments to the generators of the framed cord algebra
$$\epsilon_\rho(\mu) = \mu_0, \quad \epsilon_\rho(\lambda) =\lambda_0,\quad  \epsilon_\rho([\gamma]) = (1-\mu_0)\rho(\gamma)_{11},$$
where $\rho(\gamma)_{11}$ is the $(1,1)$-entry of the matrix $\rho(\gamma)$ \cite{Ng3}. This construction gives a map
\begin{equation}\label{Cornwellsur}
\{\textrm{KCH representation } (\rho,V)\} \rightarrow \{\textrm{Augmentation }\epsilon \,|\, \epsilon(\mu)\neq 1\}.
\end{equation}
Cornwell proves that this map (\ref{Cornwellsur}) is surjective.  Moreover, every such augmentation lifts to an irreducible KCH representation, unique up to isomorphism \cite{Cor13b}.

By Theorem \ref{Sheafclassify}, the KCH representations can be naturally identified as a subset of simple abelian sheaves microsupported along the knot conormal. In the last section, we defined a map from these sheaves to the augmentations. The following proposition shows that (\ref{Cornwellsur}) factors through these two maps.

\begin{prop}\label{factorthrough}
Let $(\rho,V)$ be a KCH representation and $\cE$ the associated KCH local system. Let $\cF = j_*\cE$ be a simple sheaf. Then $\epsilon_\rho = \epsilon_\cF$.
\end{prop}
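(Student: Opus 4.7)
The plan is a direct matching of the three generating values $\mu$, $\lambda$, and $[\gamma]$ on both sides, after selecting a convenient basis of $V$. The essential inputs are (i) the normal form of a KCH representation recorded in Remark \ref{KCHRmk}, and (ii) the identification of the knot stalk of $\cF = j_*\cE$ with the meridian-invariant subspace, which was computed in the proof of Lemma \ref{pushforwardKCH}. Together these pin down the data $(\rho,\rho',T)$ attached to $\cF$ in Lemma \ref{SheafRep} explicitly enough to evaluate $\epsilon_\cF$.

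First I would fix a basis $\{v_1,\dots,v_n\}$ of $V$ as in Remark \ref{KCHRmk}, so that
\[
\rho(m) = \begin{pmatrix} \mu_0 & \\ & I_{n-1} \end{pmatrix},\qquad \rho(\ell) = \begin{pmatrix} \lambda_0 & \\ & \ast_{n-1} \end{pmatrix},
\]
and in particular $V^{\la m\ra} = \textrm{Span}_k\{v_2,\dots,v_n\}$ has codimension $1$. By Lemma \ref{pushforwardKCH} the stalk $W$ of $\cF = j_*\cE$ along $K$ is canonically $V^{\la m\ra}$, and the transform $T:W\to V$ of Lemma \ref{SheafRep} is the inclusion. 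Since $\ell$ preserves $V^{\la m\ra}$ (as $m$ and $\ell$ commute) and $T\circ \rho'(K) = \rho(\ell)\circ T$ by Lemma \ref{SheafRep}(3a), the matrix of $\rho'(K)$ in the basis $\{v_2,\dots,v_n\}$ is precisely the lower-right block $\ast_{n-1}$.

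With this normal form the three values defining $\epsilon_\cF$ are immediate. For the meridian,
\[
\epsilon_\cF(\mu) = tr\,\rho(m) - \dim V + 1 = (\mu_0 + (n-1)) - n + 1 = \mu_0 = \epsilon_\rho(\mu).
\]
For the longitude, the identification $\rho'(K) = \ast_{n-1}$ gives
\[
\epsilon_\cF(\lambda) = tr\,\rho(\ell) - tr\,\rho'(K) = \lambda_0 = \epsilon_\rho(\lambda).
\]
For the cord generator $[\gamma]$, note that $Id_V - \rho(m) = (1-\mu_0)E_{11}$ in this basis, so
\[
\epsilon_\cF([\gamma]) = tr\bigl((Id_V - \rho(m))\rho(\gamma)\bigr) = (1-\mu_0)\,\rho(\gamma)_{11} = \epsilon_\rho([\gamma]).
\]

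There is no serious obstacle here; the only point requiring a little care is the identification of $\rho'(K)$ with the $\ast_{n-1}$ block, which follows from commutativity of $m$ and $\ell$ together with the compatibility condition $T\circ\rho'(K) = \rho(\ell)\circ T$. Once those three scalar equalities are verified on generators they extend to the whole framed cord algebra by definition of an augmentation, completing the proof that $\epsilon_\rho = \epsilon_\cF$.
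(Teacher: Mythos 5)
Your proposal is correct and follows essentially the same route as the paper: choose the basis from Remark \ref{KCHRmk}, identify $W$ with the meridian-invariant subspace via Lemma \ref{pushforwardKCH} so that $\rho'(K)$ is the block $\ast_{n-1}$, and then verify the three generator equalities by the same trace computations. The only difference is that you spell out slightly more explicitly why $\rho'(K)$ coincides with the $\ast_{n-1}$ block (commutativity of $m$ and $\ell$ plus condition (3a)), which the paper asserts in passing.
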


\begin{proof}
The sheaf determines a representation $\rho': \bbZ_K \rightarrow W$ (Lemma \ref{SheafRep}). By construction, $W$ is identified as a subspace of $V$. By Remark \ref{KCHRmk}, we can choose a basis of $V$ such that
$$\rho(m) = \begin{pmatrix} \mu_0 & \\ & I_{n-1} \end{pmatrix},
\quad
\rho(\ell) = \begin{pmatrix} \lambda_0 & \\ &\ast_{n-1} \end{pmatrix},
\quad \rho'(K) = \begin{pmatrix} \ast_{n-1} \end{pmatrix}.
$$
Here, if $\{v_1,v_2, \dotsb, v_n\}$ is the basis for $V$, then $\{\hat{v}_1, v_2, \dotsb, v_n\}$ is the basis for $W$. Also note that $\ast_{n-1}$ in both $\rho(\ell)$ and $\rho'(K)$ refers to the same square matrix. It is straightforward to compute:
\begin{itemize}
\item
$\epsilon_\cF(\mu) = \textrm{tr}(\rho(m)) -\dim V + 1 = \mu_0 = \epsilon_\rho(\mu)$; and
\item
$\epsilon_\cF(\lambda) = \textrm{tr}(\rho(\ell)) - \textrm{tr}(\rho'(K)) = \lambda_0 = \epsilon_\rho(\lambda)$; and
\item
$\epsilon_\cF([\gamma]) = \textrm{tr}\left(\rho(\gamma) -\rho(m\gamma) \right) = \textrm{tr}( (1-\mu_0)E_{11}\rho(\gamma)) = (1-\mu_0)\rho(\gamma)_{11} = \epsilon_\rho([\gamma]).$
\end{itemize}
\end{proof}

\subsection{Unipotent KCH representations and augmentations}\label{Sec:UniKCHAug}
In this section, we present a correspondence between unipotent KCH representations and augmentations -- every augmentation with $\epsilon([e]) =0$ but $\epsilon ([\gamma])\neq 0$ for some $\gamma\in \pi_K$ can be lifted to  
an irreducible unipotent KCH representation, unique up to isomorphism.

Suppose $\{m_i\}_{i\in I}$ is a finite set of meridian generators of $\pi_K$. Suppose $I = \{1, \dotsb, N\}$ has size $N$. Unless $K$ is the unknot there is $N\geq 3$. Let $m= m_1$ be the preferred meridian, whose homology class is $\mu$. Since any meridian $m_t$ is conjugate to $m$, we choose $g_t\in \pi_K$ to be elements such that $m_t= g_t^{-1}m g_t$. 

Suppose $\epsilon: \cP_K\rightarrow k$ is an augmentation. We consider a square matrix $R$ defined by $\epsilon$, where entries are given by 
$$R_{ij} = \epsilon([g_ig_j^{-1}]).$$
Sometimes the matrix $R$ determines $\epsilon$, and sometimes one needs to specified $\epsilon(\lambda)$ in addition. The idea is that one can express a knot group element as a word of meridian generators, each of which is a conjugation of the preferred meridian $m$. Applying the skein relations iteratively, one obtains an expression without $m$, but only products of $g_ig_j^{-1}$. It becomes clear when $\epsilon(\lambda)$ needs to be specified after we prove the main Theorem \ref{CornNgfactor}. 

Let $R_j$ be the column vectors of $R$. We will construct a knot gorup representation over the following vector space $$V: = \textrm{Span}_k \{R_j\}_{j\in I}.$$

We adopt a convention using a floating index $\alpha$ exhausting $I$ to represent some column vectors of size $N$. For any $\gamma\in \pi_K$, define
$$\epsilon([g_\alpha \gamma]) := \big(\,\epsilon([g_1 \gamma])\, ,\, \epsilon([g_2 \gamma]) \, ,\, \dotsb \, ,\, \epsilon([g_N \gamma])\, \big).$$

\begin{prop}
The following map defines a representation $\rho: \pi_K\rightarrow GL(V)$:
\begin{equation}\label{LiftRep}
\rho(\gamma)R_j := \epsilon([g_\alpha \gamma g_j^{-1}]),\quad \textrm{for any }\gamma\in \pi_K.
\end{equation}
\end{prop}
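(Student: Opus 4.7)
The strategy is to use the skein relation of Definition \ref{AugDef}(4) as the single nontrivial input: once the action of each meridian generator $m_t$ is identified as an explicit rank-one perturbation of the identity on $k^N$, everything else reduces to extending linearly and inducting on word length.

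The cornerstone skein computation is the following. For any $\gamma\in \pi_K$ and any meridian generator $m_t = g_t^{-1} m g_t$, applying the skein relation of Definition \ref{AugDef}(4) with $\gamma_1 = g_\alpha g_t^{-1}$ and $\gamma_2 = g_t \gamma g_j^{-1}$ yields
$$\epsilon([g_\alpha m_t \gamma g_j^{-1}]) = \epsilon([g_\alpha \gamma g_j^{-1}]) - R_{\alpha t}\, \epsilon([g_t \gamma g_j^{-1}]).$$
Letting $M(\gamma)$ denote the $N\times N$ matrix with $(\alpha,j)$-entry $\epsilon([g_\alpha \gamma g_j^{-1}])$, this reads
$$M(m_t \gamma) = \widetilde\rho(m_t)\, M(\gamma), \qquad \widetilde\rho(m_t) := I_N - R_t\, e_t^T,$$
where $e_t^T:k^N\to k$ is the $t$-th coordinate functional. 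Since $R_t\in V$, the operator $\widetilde\rho(m_t)$ preserves $V$; it is invertible on $k^N$ precisely when $e_t^T R_t = R_{tt} = \epsilon([e]) = 1-\epsilon(\mu)\neq 1$, i.e.\ when $\epsilon(\mu)\neq 0$, the standing hypothesis in this section. Its inverse $I_N + \epsilon(\mu)^{-1} R_t e_t^T$ again preserves $V$.

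For any word $w = m_{t_1}^{\pm 1}\cdots m_{t_k}^{\pm 1}$ in meridian generators, set $\widetilde\rho(w) := \widetilde\rho(m_{t_1})^{\pm 1}\cdots \widetilde\rho(m_{t_k})^{\pm 1} \in GL(k^N)$. A short induction on $k$ using the cornerstone formula in both directions (the inverse case being $M(m_t^{-1}\gamma) = \widetilde\rho(m_t)^{-1} M(\gamma)$, obtained by substituting $\gamma \mapsto m_t^{-1}\gamma$ in $M(m_t\gamma)=\widetilde\rho(m_t) M(\gamma)$) yields
$$\widetilde\rho(w)\, R_j = \epsilon([g_\alpha w g_j^{-1}])\quad \text{for every } j\in I.$$
Since the right-hand side depends only on the class $\gamma = [w]\in \pi_K$, the operators $\widetilde\rho(w_1)$ and $\widetilde\rho(w_2)$ agree on the spanning set $\{R_j\}$ whenever $[w_1]=[w_2]$; combined with the $k$-linearity of $\widetilde\rho(w)$ on the ambient $k^N$ (which automatically preserves any linear relation among the $R_j$'s), this shows that $\widetilde\rho(w)|_V$ descends to a well-defined element $\rho(\gamma)\in GL(V)$, independent of the chosen word representative. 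Multiplicativity $\rho(\gamma_1\gamma_2) = \rho(\gamma_1)\rho(\gamma_2)$ and $\rho(e) = I_V$ are then immediate from word concatenation.

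The main obstacle is really only the cornerstone skein identity of the second paragraph; everything after that is formal. The subtle point to watch for is that $\{R_j\}_{j\in I}$ is only a spanning set of $V$ and not necessarily a basis, so well-definedness of $\rho(\gamma)$ on $V$ must be established through the linearity of $\widetilde\rho(w)$ on the ambient $k^N$ rather than by prescribing $\rho(\gamma)$ on an a priori chosen basis.
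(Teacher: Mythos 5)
Your proposal is correct, and its computational core is the same as the paper's: the one nontrivial input is the skein relation applied with $\gamma_1 = g_\alpha g_t^{-1}$ and $\gamma_2 = g_t\gamma g_j^{-1}$, followed by an induction on word length in the meridian generators. The difference is organizational, and it buys you something. The paper works directly on $V$: it computes $\rho(m_t^{\pm1})R_j$ from the defining formula, checks these land in $V$, and then proves $\rho(\gamma_1)\rho(\gamma_2)=\rho(\gamma_1\gamma_2)$ by a separate induction on the word length of $\gamma_2$. You instead lift everything to explicit rank-one perturbations $\widetilde\rho(m_t)=I_N-R_te_t^T$ of the identity on the ambient $k^N$, prove $M(w)=\widetilde\rho(w)R$ for words, and let multiplicativity and invertibility fall out of word concatenation. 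This makes explicit a point the paper passes over in silence: since $\{R_j\}_{j\in I}$ is only a spanning set of $V$, the formula $\rho(\gamma)R_j:=\epsilon([g_\alpha\gamma g_j^{-1}])$ does not a priori define a linear map on $V$ until one knows it respects linear relations among the $R_j$; your descent from a genuine operator on $k^N$ settles this cleanly (the paper's formulas do exhibit $\rho(m_t^{\pm1})$ as restrictions of ambient operators, so the gap is recoverable, but it is not addressed). Two small remarks: the invertibility criterion $\det(I_N-R_te_t^T)=1-R_{tt}=\epsilon(\mu)\neq 0$ is not a ``standing hypothesis in this section'' but holds for every augmentation, since $\mu$ is a unit in $\bbZ[\lambda^{\pm1},\mu^{\pm1}]$ and $\epsilon$ is unit-preserving, so your argument applies in the full generality of the proposition; and your inverse formula $\widetilde\rho(m_t)^{-1}=I_N+\epsilon(\mu)^{-1}R_te_t^T$ reproduces, via the meridian relation $\epsilon([m^{-1}\gamma])=\epsilon(\mu)^{-1}\epsilon([\gamma])$, exactly the paper's expression $\rho(m_t^{-1})R_j=R_j+\epsilon([m^{-1}g_tg_j^{-1}])R_t$.
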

\begin{rmk}
This to be justified representation is a simplification and an extension of the construction in \cite{Cor13b}. Cornwell's original construction introduced a localized algebra and a ``universal augmentation'', which work well but limited to the case when $\epsilon([e])\neq 0$. Our goal is to build the correspondence between augmentations and simple sheaves, in all three cases classified in Proposition \ref{DerivedSheafloc}. The new construction will adapt to all cases.
\end{rmk}

\begin{proof}
There are several things we need to justify. For any $\gamma\in \pi_K$, $\rho(\gamma)$ is closed and well-defined. The actions of based loops respect the group structure, namely the identity and the group product.

 (1) Closedness. One needs to check that $\rho(\gamma)R_j \in V$ for any $\gamma\in \pi_K$ and any $R_i$. It suffices to check for meridian generators. For any $R_j$ and any meridian generator $m_t$,
\begin{equation}\label{generatoraction}
\rho(m_t)R_j = R_j - \epsilon([g_t g_j^{-1}])R_t, \quad \rho(m_t^{-1})R_j =R_j + \epsilon([m^{-1}g_t g_j^{-1}])R_t,
\end{equation}
because
\begin{align*}
\rho(m_t)R_j &= \epsilon([g_{\alpha} m_t g_j^{-1}]) 	&& [ \textrm{Definition } (\ref{LiftRep})]\\
		&= \epsilon([g_{\alpha} g_t^{-1} m g_t g_j^{-1}]) && [m_t = g_t^{-1} m g_t]\\
		&= \epsilon([g_{\alpha} g_t^{-1}g_t g_j^{-1}]) - \epsilon([g_{\alpha} g_t^{-1}])\epsilon([g_t g_j^{-1}])	&& [\textrm {Skein relation}]\\
		&= R_j - \epsilon([g_t g_j^{-1}])R_t,
\end{align*}
and similarly
\begin{align*}
\rho(m_t^{-1})R_j 
		&= \epsilon([g_{\alpha} g_t^{-1} m^{-1} g_t g_j^{-1}]) \\
		&= \epsilon([g_{\alpha} g_t^{-1}mm^{-1}g_t g_j^{-1}]) + \epsilon([g_{\alpha} g_t^{-1}])\epsilon([m^{-1}g_t g_j^{-1}]) \\
		&= R_j + \epsilon([m^{-1}g_t g_j^{-1}])R_t.
\end{align*}

(2) Identity. It is straightforward to see that $\rho([e])R_j = \epsilon([g_\alpha e g_j^{-1}]) = \epsilon([g_\alpha g_j^{-1}]) = R_j$.

(3) Group product. We need $\rho(\gamma_1)\rho(\gamma_2)= \rho(\gamma_1\gamma_2)$, and we prove it by an induction on the word length of $\gamma_2$. To see the initial step, let $\gamma_2 = m_t$. For any $\gamma_1\in \pi_K$ and any $R_j$, there are
\begin{align*}
\rho(\gamma_1)\rho(m_t)R_j 
		&= \rho(\gamma_1)\big(R_j - \epsilon([g_t g_j^{-1}])R_t\big) && [\textrm{Equation }(\ref{generatoraction})]\\
		&= \epsilon([g_\alpha \gamma_1 g_j^{-1}]) - \epsilon([g_t g_j^{-1}])\epsilon([g_\alpha \gamma_1 g_t^{-1}]), && [ \textrm{Definition } (\ref{LiftRep})]
\end{align*}
and
\begin{align*}
\rho(\gamma_1 m_t)R_j 
		&= \epsilon([g_\alpha \gamma_1 m_t g_j^{-1}]) \\
		&= \epsilon([g_\alpha \gamma_1 g_t^{-1} m g_t g_j^{-1}]) \\
		&= \epsilon([g_\alpha \gamma_1 g_t^{-1}g_{t} g_j^{-1}]) - \epsilon([g_\alpha \gamma_1 g_t^{-1}]) \epsilon([g_t g_j^{-1}]).
\end{align*}
Therefore $\rho(\gamma_1)\rho(m_t) = \rho(\gamma m_t)$. Similarly there is $\rho(\gamma_1)\rho(m_t^{-1}) = \rho(\gamma_1m_t^{-1})$ because
$$\rho(\gamma_1)\rho(m_t^{-1})R_j = \epsilon([g_\alpha\gamma_1 g_j^{-1}]) + \epsilon([g_\alpha\gamma_1g_t^{-1}])\epsilon([m^{-1} g_t g_j^{-1}])= \rho(\gamma_1m_t^{-1})R_j.$$

To see the induction step, suppose $\gamma_2 = \gamma_2'm_t^{\pm1}$ such that $\gamma_2'$ satisfied the induction hypothesis that $\rho(\gamma_1)\rho(\gamma_2') = \rho(\gamma_1\gamma_2')$ for any $\gamma_1\in \pi_K$, then 
$$\rho(\gamma_1)\rho(\gamma_2'm_t^{\pm1}) = \rho(\gamma_1)\rho(\gamma_2')\rho(m_t^{\pm1}) = \rho(\gamma_1\gamma_2')\rho(m_t^{\pm1}) = \rho(\gamma_1\gamma_2' m_t^{\pm1}).$$
Therefore $\rho(\gamma_1)\rho(\gamma_2) = \rho(\gamma_1\gamma_2)$. 

(4) Well-definedness. We want to show that for any $\gamma\in \pi_K$, if there is a subset $I'\subset I$ such that $\sum_{i\in I'} a_iR_i = 0$, then $\sum_{i\in I'} a_i\rho(\gamma)R_i = 0$. It suffices to proof for $\gamma = m_t^{\pm 1}$, then the argument continues by induction on the word length of $\gamma$. Let $\gamma = m_t$, by (\ref{generatoraction}), we have
$$\sum_{i\in I'} a_i\rho(\gamma)R_i = \sum_{i\in I'} a_iR_i - \sum_{i\in I'} a_i \epsilon([g_tg_i^{-1}])R_t. $$
The first summand is zero by hypothesis. In the second summand, $\sum_{i\in I'} a_i \epsilon([g_tg_i^{-1}])$ is the $t$-th entry of $\sum_{i\in I'} a_iR_i =0$, and therefore also equals to zero. The argument for $\gamma= m_t^{-1}$ is similar. 

We complete the proof.
\end{proof}

Cornwell proved if $\epsilon([e]) = 0$ (or $\epsilon(\mu)=1$), then (\ref{LiftRep}) is an irreducible KCH representation \cite[Corollary 3.7]{Cor13b}. We show that if $\epsilon([e]) =0$ but $\epsilon([\gamma])\neq 0$ for some $\gamma\in \pi$, then (\ref{LiftRep}) defines an irreducible unipotent KCH representation. The next lemma unwraps the condition for the augmentation.

\begin{lem}\label{UniAugcond}
The following conditions are equivalent for an augmentation $\epsilon: \cP_K\rightarrow k$,
\begin{enumerate}
\item
$\epsilon([\gamma]) = 0$ for all $\gamma\in \pi_K$;
\item
$\epsilon([g_i^{-1}])= 0$ for all $i \in I$;
\item
$\epsilon([g_i])= 0$ for all $i \in I$.
\end{enumerate}
\end{lem}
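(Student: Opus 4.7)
The plan is to prove the equivalence by establishing the trivial implications $(1)\Rightarrow(2)$ and $(1)\Rightarrow(3)$ (immediate since $g_i^{\pm 1}\in \pi_K$) and the two converses $(2)\Rightarrow(1)$ and $(3)\Rightarrow(1)$.

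The key tool will be an expansion formula derived from the skein relation. For any $\gamma\in \pi_K$ and any index $t\in I$, decompose $m_t^{\pm 1}=g_t^{-1}m^{\pm 1}g_t$ and apply the skein identity $\epsilon([\gamma_1 m\gamma_2])=\epsilon([\gamma_1\gamma_2])-\epsilon([\gamma_1])\epsilon([\gamma_2])$. With $\gamma_1=g_t^{-1}$ and $\gamma_2=g_t\gamma$ this directly yields
\begin{equation*}
\epsilon([m_t\gamma])=\epsilon([\gamma])-\epsilon([g_t^{-1}])\epsilon([g_t\gamma]).
\end{equation*}
For the inverse, one inserts $mm^{-1}$ into $g_t^{-1}m^{-1}g_t\gamma$, applies the skein relation, and uses $\epsilon([m^{-1}\delta])=\epsilon(\mu)^{-1}\epsilon([\delta])$ (valid because $\mu$ is invertible in $\cP_K$) to get
\begin{equation*}
\epsilon([m_t^{-1}\gamma])=\epsilon([\gamma])+\epsilon(\mu)^{-1}\epsilon([g_t^{-1}])\epsilon([g_t\gamma]).
\end{equation*}
Symmetric expansions obtained by splitting on the right yield analogous formulas expressing $\epsilon([\gamma m_t^{\pm 1}])$ in terms of $\epsilon([\gamma g_t^{-1}])$ and $\epsilon([g_t])$.

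For $(2)\Rightarrow(1)$, I would first observe that $\epsilon([e])=0$. Specializing the first displayed formula at $t=1$, $\gamma=e$ gives $\epsilon([m_1])=\epsilon([e])-\epsilon([g_1^{-1}])\epsilon([g_1])=\epsilon([e])$ under (2), while the meridian relation gives $\epsilon([m_1])=\epsilon(\mu)\epsilon([e])$; combined with the normalization $\epsilon([e])=1-\epsilon(\mu)$, these force $(1-\epsilon(\mu))^2=0$, hence $\epsilon([e])=0$. Now write any $\gamma\in \pi_K$ as a word $m_{t_1}^{\varepsilon_1}\cdots m_{t_k}^{\varepsilon_k}$ in the meridian generators; under hypothesis (2), $\epsilon([g_t^{-1}])=0$ for every $t$, so both displayed formulas collapse to $\epsilon([m_t^{\pm 1}\gamma'])=\epsilon([\gamma'])$. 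Iteratively peeling letters off the left reduces $\epsilon([\gamma])$ to $\epsilon([e])=0$.

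The implication $(3)\Rightarrow(1)$ proceeds identically using the right-multiplication formulas: the hypothesis $\epsilon([g_t])=0$ forces $\epsilon([\gamma'm_t^{\pm 1}])=\epsilon([\gamma'])$, so one peels letters off the right. The main bookkeeping obstacle is the derivation of the $m_t^{-1}$ formula, since the skein relation only inserts $m$, not $m^{-1}$: one must introduce $mm^{-1}$ at the right spot, cancel, and collect the resulting meridian factor via invertibility of $\epsilon(\mu)$. Once that identity is secured, both inductions are routine.
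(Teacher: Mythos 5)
Your proposal is correct and follows essentially the same route as the paper: both arguments expand $\epsilon([m_t^{\pm1}\gamma])$ (resp.\ $\epsilon([\gamma m_t^{\pm1}])$) via the skein relation applied to $m_t=g_t^{-1}mg_t$ and then induct on word length in the meridian generators. The only cosmetic differences are that the paper obtains $\epsilon([e])=0$ immediately from $g_1=e$ rather than via the normalization identity, and handles the $m_t^{-1}$ term by leaving the factor $\epsilon([m^{-1}g_t\gamma])$ untouched (it is killed by the vanishing of $\epsilon([g_t^{-1}])$) instead of invoking $\epsilon([m^{-1}\delta])=\epsilon(\mu)^{-1}\epsilon([\delta])$.
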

\begin{proof}
By construction $g_1 =g_1^{-1} =e$, hence $\epsilon([e]) =0$ in either (2) or (3).

Obviously (1) $\Rightarrow$ (2) and (1) $\Rightarrow$ (3).

\smallskip
(2) $\Rightarrow$ (1). We prove by induction on the word length in terms of meridian generators.

Initial step. Suppose $m_t$ is a meridian generator, then
\begin{align*}
\epsilon([m_t]) 
		&= \epsilon([g_t^{-1} m g_t]) \\
		&= \epsilon([g_t^{-1}g_t])- \epsilon([g_t^{-1}])\epsilon([g_t]) &&[\textrm{Skein relation}] \\
		&= \epsilon([e]) - \epsilon([g_t^{-1}])\epsilon([g_t]) = 0,
\end{align*}
and
\begin{align*}
\epsilon([m_t^{-1}]) 
		&= \epsilon([g_t^{-1} m^{-1} g_t]) \\
		&= \epsilon([g_t^{-1}m m^{-1}g_t]) + \epsilon([g_t^{-1}])\epsilon([m^{-1}g_t]) \qquad[\textrm{Skein relation}]\\
		&= \epsilon([e]) + \epsilon([g_t^{-1}])\epsilon([m^{-1}g_t]) = 0.
\end{align*}

Induction step. Suppose $\epsilon([\gamma]) = 0$, we show that $\epsilon([m_t\gamma]) = \epsilon([m_t^{-1}\gamma])=0$ for any meridian generator $m_t$.
$$\epsilon([ m_t \gamma]) = \epsilon([ g_t^{-1} m g_t \gamma]) = \epsilon([ g_t^{-1}g_t \gamma])- \epsilon([ g_t^{-1}])\epsilon([g_t\gamma]) = \epsilon([\gamma]) - \epsilon([g_t^{-1}])\epsilon([g_t\gamma]) = 0,$$
and similarly,
\begin{align*}
\epsilon([ m_t^{-1} \gamma]) = \epsilon([ g_t^{-1} m^{-1} g_t \gamma]) 
	&= \epsilon([ g_t^{-1}m m^{-1} g_t \gamma]) + \epsilon([ g_t^{-1}])\epsilon([m^{-1}g_t\gamma]) \\
&= \epsilon([\gamma])+ \epsilon([g_t^{-1}])\epsilon([m^{-1}g_t\gamma]) = 0.
\end{align*}
We complete the induction, proving that (2) $\Rightarrow$ (1).

\smallskip
The proof of (3) $\Rightarrow$(1) is similar, except performing the induction on $\gamma m_t$ or $\gamma m_t^{-1}$. 
\end{proof}

\begin{prop}\label{IrrUniKCHLift}
If $\epsilon([e])=0$ but $\epsilon([\gamma])\neq 0$ for some $\gamma \in \pi_K$, then (\ref{LiftRep}) defines an irreducible unipotent KCH representation.
\end{prop}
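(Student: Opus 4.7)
The plan has two parts: show that (\ref{LiftRep}) defines a unipotent KCH representation, and show that it is irreducible. For the unipotent condition, I compute $\rho(m) - I_V$ on the spanning vectors $R_j$ using (\ref{generatoraction}): with $t=1$, $m_1 = m$, and $g_1 = e$, this gives $(\rho(m) - I_V)R_j = -\epsilon([g_j^{-1}])R_1$, so the image of $\rho(m) - I_V$ sits in $\textrm{Span}(R_1)$. The hypothesis $\epsilon([\gamma]) \neq 0$ for some $\gamma$, combined with Lemma \ref{UniAugcond}, forces both $R_1 \neq 0$ (its entries are the $\epsilon([g_i])$) and some $\epsilon([g_j^{-1}]) \neq 0$, so the rank is exactly one. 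Moreover $(\rho(m) - I_V)R_1 = -\epsilon([e])R_1 = 0$, so the image lies in the kernel and $(\rho(m) - I_V)^2 = 0$. A rank-one square-zero endomorphism is conjugate to $E_{12}$, matching the normal form $I_n + E_{12}$ of Lemma \ref{StdMatrix}; hence $\rho$ is a unipotent KCH representation.

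For irreducibility, let $U \subset V$ be a nonzero $\pi_K$-invariant subspace. I split into two cases. If $R_t \in U$ for some $t$, then $\rho(g_s^{-1} g_t) R_t = \epsilon([g_\alpha g_s^{-1}]) = R_s$ is read off directly from (\ref{LiftRep}), so $U$ contains every $R_s$ and therefore $U = V$. Otherwise no $R_t$ lies in $U$, and for each meridian generator $m_t$ the inclusion $(\rho(m_t) - I_V)(U) \subset U \cap \textrm{Span}(R_t) = 0$ (automatic when $R_t = 0$, valid otherwise because $R_t \notin U$ cuts the 1-dimensional line) forces $m_t$ to act as the identity on $U$. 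Since meridians generate $\pi_K$, all of $\pi_K$ acts trivially on $U$, i.e., $U \subset V^{\pi_K}$. It now suffices to prove $V^{\pi_K} = 0$. Writing $u = \sum_j c_j R_j \in V^{\pi_K}$, the identity $\rho(m_t)u = u$ together with (\ref{generatoraction}) gives $u_t R_t = 0$ for every $t$, where $u_t = \sum_j c_j \epsilon([g_t g_j^{-1}])$ is the $t$-th component of $u$. Applying $\rho(g_s^{-1}g_t)$ and reading off the $s$-th component via (\ref{LiftRep}) yields $u_t = u_s$ for all $s,t$, so all components of $u$ equal a common value $u_\ast$. If $u_\ast \neq 0$, then $u_t R_t = 0$ forces $R_t = 0$ for every $t$, so $V = 0$ and $u = 0$, a contradiction; hence $u_\ast = 0$ and $u = 0$.

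The central subtlety is the last step, ruling out nonzero trivial subrepresentations of $V$. It rests on two complementary constraints on a $\pi_K$-fixed vector: the "diagonal" constraint $u_t R_t = 0$ coming from the single meridian $m_t$, and the "off-diagonal" equality $u_t = u_s$ coming from the conjugators $g_s^{-1} g_t$. Together these leave only $u = 0$ provided some column $R_t$ is nonzero, which is precisely what the hypothesis $\epsilon([\gamma]) \neq 0$ for some $\gamma$ secures via Lemma \ref{UniAugcond}; without it, one would be in Cornwell's degenerate situation with $V = 0$.
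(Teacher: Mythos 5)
Your proof is correct, and while the first half coincides with the paper's, the irreducibility half takes a genuinely different and more self-contained route. For the unipotent condition, the paper first shows $\textrm{rank}\, R\geq 2$ (vanishing diagonal, nonvanishing first row and first column of $R$) and then invokes Lemma \ref{StdMatrix}; you instead observe directly that $\rho(m)-Id_V$ has rank one with image inside its kernel, which produces the normal form $I_n+E_{12}$ at once and yields $\dim V\geq 2$ as a byproduct -- same computation, tighter packaging. The real divergence is in irreducibility. The paper's part (2) rests on the assertion that every unipotent KCH representation contains $\textrm{Span}_k\{\tilde V_I\}$ as its \emph{unique irreducible} sub-representation, citing Lemmas \ref{UniKCHirrsubrep} and \ref{quottrivial2}; those lemmas only establish that this span is a sub-representation with trivial quotient, not that it is irreducible, so the printed argument leaves a step implicit. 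Your dichotomy -- either some $R_t$ lies in the invariant subspace $U$, in which case $\rho(g_s^{-1}g_t)R_t=R_s$ drags in every column and $U=V$, or else each $(\rho(m_t)-Id_V)(U)\subset U\cap \textrm{Span}(R_t)=0$ forces $U\subset V^{\pi_K}$ -- reduces everything to showing $V^{\pi_K}=0$, and your component-wise computation ($u_tR_t=0$ from the meridians, $u_t=u_s$ from the conjugators $g_s^{-1}g_t$, and some $R_t\neq 0$ from Lemma \ref{UniAugcond}) settles that cleanly. What your approach buys is a direct, gap-free verification for the specific representation (\ref{LiftRep}); what the paper's approach buys, if the structural claim were fully proved, is a statement reusable for arbitrary unipotent KCH representations (it is indeed reused in Proposition \ref{uniqueness}).
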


\begin{proof}

(1) We first prove that under the hypothesis, the matrix $R_{ij} = \epsilon([g_ig_j^{-1}])$ has rank at least $2$. Then the representation $(\rho,V)$ defined in (\ref{LiftRep}) has dimension at least $2$.

Since $\epsilon([e]) = 0$, the diagonal entries of $R$ are $0$.

Recall that $m_1= m$, which implies $g_1 = e$. Therefore $R_{i1} = \epsilon([g_ig_1^{-1}]) =\epsilon([g_i])$ and $R_{1j} = \epsilon([g_1g_j^{-1}]) = \epsilon([g_j^{-1}])$. By Lemma \ref{UniAugcond}, neither the first column $R_{i1}$ nor the first row  $R_{1j}$ is zero, otherwise $\epsilon([\gamma]) = 0$ for all $\gamma \in \pi_K$, contradicting the hypothesis.

Since the first row is not zero, there is a non-zero entry, say $R_{1s}\neq 0$. Then the column vector $R_s$ is non-zero, and linearly independent from $R_1$ because $R_{11}=0$ but $R_{1s}\neq 0$. There are two linearly independent non-zero vectors $R_1$ and $R_s$. Hence the rank is at least $2$.

Finally we compute $\rho(m)$. By (\ref{generatoraction}), there is $\rho(m)R_j = R_j - \epsilon ([g_j^{-1}])R_1$. In particular, $\rho(m)R_1 = R_1$ because $\epsilon([g_1^{-1}]) = \epsilon([e]) = 0$. By Lemma \ref{StdMatrix} (which has a dimension constraint), $\rho(m) = \textrm{id}_V + E_{12}$ under some basis. It is a unipotent KCH representation by definition.

(2) By Lemma \ref{UniKCHirrsubrep} and Lemma \ref{quottrivial2}, every unipotent KCH representation $(\tilde{\rho}, \tilde{V})$ contains a unique irreducible unipotent KCH sub-representation, characterized by $\textrm{Span}_k\{\tilde{V}_I\}$ where $\tilde{V}_i:= \textrm{im}\, (\textrm{id}_{\tilde{V}} - \tilde\rho(m_i))$ for each $i\in I$. Equation (\ref{generatoraction}) yields $\rho(m_i)R_j = R_ j -\epsilon([g_ig_j^{-1}])R_i$. In other words, if we fix $i\in I$, then for any other $j\in I$, there is
$$(\textrm{id}_V - \rho(m_i))R_j = \epsilon([g_ig_j^{-1}])R_i.$$
Therefore $V_i := \textrm{im}\,(\textrm{id}_V - \rho(m_i)) = \textrm{Span}_k\{R_i\}$. By definition $V = \textrm{Span}_k\{V_I\}$, which is thusly irreducible.
\end{proof}

Next we show that the lifted unipotent KCH representation in turn induces the augmentation begun with.
\begin{prop}\label{UniKCHLiftMatch}
Suppose $\cE_u$ is a unipotent KCH representation defined by an augmentation $\epsilon$ as in (\ref{LiftRep}). Let $\cF = j_*\cE_u$ be the associated simple sheaf. Then $\epsilon_\cF = \epsilon.$
\end{prop}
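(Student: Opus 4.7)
The plan is to verify $\epsilon_\cF = \epsilon$ by matching both augmentations on each class of generators of $\cP_K$, namely $\mu$, $\lambda$, and the brackets $[\gamma]$ for $\gamma\in\pi_K$. All three checks should reduce to short computations that exploit the rank-one structure of $Id_V - \rho(m)$ built into formula $(\ref{LiftRep})$. For the meridian, Proposition \ref{IrrUniKCHLift} supplies a basis in which $\rho(m) = Id_V + E_{12}$, giving $tr(\rho(m)) = \dim V$ and hence $\epsilon_\cF(\mu) = 1$; since $\epsilon([e]) = 0$ forces $\epsilon(\mu) = 1$ by normalization, the values agree.

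For the brackets, I would specialize equation $(\ref{generatoraction})$ to $t = 1$ and use $g_1 = e$ to obtain $(Id_V - \rho(m))R_j = \epsilon([g_j^{-1}])R_1 = (R_j)_1 \cdot R_1$, where $(R_j)_1$ denotes the first coordinate of $R_j$ viewed as a vector in $k^N$. Thus $Id_V - \rho(m)$ is the rank-one operator $R_1 \otimes \phi$, with $\phi$ the restriction to $V$ of the first coordinate functional on $k^N$. The standard rank-one trace formula then yields, for any $\gamma\in\pi_K$,
$$\epsilon_\cF([\gamma]) = tr\big((Id_V - \rho(m))\rho(\gamma)\big) = \phi\big(\rho(\gamma)R_1\big) = (\rho(\gamma)R_1)_1 = \epsilon([g_1 \gamma g_1^{-1}]) = \epsilon([\gamma]),$$
the last two equalities coming from the definition $(\ref{LiftRep})$.

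For the longitude, I would compute $\rho(\ell)R_1$ coordinate-wise: its $i$-th entry is $\epsilon([g_i \ell g_1^{-1}]) = \epsilon([g_i \ell])$, which collapses to $\epsilon(\lambda)\epsilon([g_i]) = \epsilon(\lambda)(R_1)_i$ via the cord algebra relation $[\gamma \ell] = \lambda[\gamma]$, so that $\rho(\ell)R_1 = \epsilon(\lambda)R_1$. Since $R_1$ spans the image of $Id_V - \rho(m)$ and lies in $W = \ker(Id_V - \rho(m))$, we may take $R_1$ as the first vector of the basis realizing $\rho(m) = Id_V + E_{12}$. The block forms for $\rho(\ell)$ and $\rho'(K) = \rho(\ell)|_W$ displayed in the proof of Theorem \ref{sheaftoaug}, case $M = E_{12}$, then show that the $(1,1)$-entry $a$ of $\rho(\ell)$ equals $\epsilon(\lambda)$, and the cancellation performed there gives $\epsilon_\cF(\lambda) = tr(\rho(\ell)) - tr(\rho'(K)) = \epsilon(\lambda)$. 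The main bookkeeping hurdle is this simultaneous normalization of $\rho(m)$ and $\rho(\ell)$ together with the correct identification of $W$ inside $V$; once $R_1$ is recognized as the distinguished $\rho(\ell)$-eigenvector sitting in $W \cap \mathrm{im}(Id_V - \rho(m))$, the three checks combine to give $\epsilon_\cF = \epsilon$.
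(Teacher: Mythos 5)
Your argument is correct, and it is genuinely different from (and shorter than) the paper's proof. The key point you isolate --- that specializing $(\ref{generatoraction})$ to $t=1$ gives $(Id_V-\rho(m))R_j=\epsilon([g_j^{-1}])R_1=(R_j)_1R_1$, so that $Id_V-\rho(m)$ is the rank-one operator $v\mapsto \phi(v)R_1$ with $\phi$ the first-coordinate functional on $V\subset k^N$ --- lets you compute $\epsilon_\cF([\gamma])=tr\big((Id_V-\rho(m))\rho(\gamma)\big)=\phi(\rho(\gamma)R_1)=\epsilon([g_1\gamma g_1^{-1}])=\epsilon([\gamma])$ in one line from the identity $tr(u\otimes\psi)=\psi(u)$, using $\phi(R_1)=\epsilon([e])=0$ implicitly nowhere and $g_1=e$ essentially everywhere. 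The paper instead fixes a basis $\{R_1,R_{j_1},\dots,R_{j_k}\}$, writes the same trace as the sum $(\ref{coefficient})$, and then runs an induction on the word length of $\gamma$ with a case split on whether $t\in I'$; your observation shows that induction is unnecessary, since the paper's formula $(\ref{coefficient})$ is exactly $\phi(\rho(\gamma)R_1)$ expanded in that basis. For the longitude, the paper takes a shortcut you could also have used: once $\epsilon_\cF([\gamma])=\epsilon([\gamma])$ for all $\gamma$ and $\epsilon_\cF(\mu)=\epsilon(\mu)=1$ are known, the relation $\epsilon([\ell\gamma])=\epsilon(\lambda)\epsilon([\gamma])$ applied to any $\gamma$ with $\epsilon([\gamma])\neq 0$ (which exists by hypothesis) pins down $\epsilon(\lambda)$ and $\epsilon_\cF(\lambda)$ identically, with no matrix bookkeeping. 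Your direct route --- showing $\rho(\ell)R_1=\epsilon(\lambda)R_1$ via $[\gamma\ell]=\lambda[\gamma]$, identifying $R_1$ as spanning $\mathrm{im}(Id_V-\rho(m))$ inside $W=\ker(Id_V-\rho(m))$ (legitimate since $(R_1)_1=\epsilon([e])=0$), and reading off $tr(\rho(\ell))-tr(\rho'(K))=a=\epsilon(\lambda)$ from the block forms --- is also valid, though it leans on the simultaneous normal forms justified in the proof of Theorem \ref{sheaftoaug}; the augmentation-relation shortcut is the more economical of the two.
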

\begin{proof}
Because both $\epsilon_\cF$ and $\epsilon$ are homomorphisms from $\cP_K$ to $k$, it suffices to check for the generators of $\cP_K$, namely $\mu, \lambda$ and $[\gamma]$ for $\gamma\in\pi_K$. 

In fact, only $\epsilon_\cF ([\gamma]) = \epsilon([\gamma])$ is necessary. The augmented values of $\mu$ and $\lambda$ automatically agree by the following argument. Since $\cE_u$ is a unipotent KCH representation, there is $\epsilon_\cF(\mu) = 1 = \epsilon(\mu)$. By the hypothesis on $\epsilon$, there exists $\gamma\in \pi_K$ such that $\epsilon([\gamma])\neq 0$. In Definition \ref{AugDef}, the longitude relation yields $\epsilon([\ell \gamma]) = \epsilon(\lambda)\epsilon([\gamma])$, which further implies $ \epsilon(\lambda) = \epsilon([\gamma])^{-1}\epsilon([\ell \gamma])$. The same holds for $\epsilon_\cF$. Therefore, if we have verified that $\epsilon_\cF([\gamma]) = \epsilon([\gamma])$ for all $\gamma\in \pi_K$, then $\epsilon_\cF(\lambda) = \epsilon_\cF([\gamma])^{-1}\epsilon_\cF([\ell \gamma]) = \epsilon([\gamma])^{-1}\epsilon([\ell \gamma]) =\epsilon(\lambda)$.

We prove $\epsilon_\cF ([\gamma]) = \epsilon([\gamma])$ by induction on the word length of meridian generators. To prepare the proof, suppose $I':=\{j_1,\dotsb, j_k\}\subset \{2,\dotsb,N\}\subset I$ is a subset of indices, such that $\{R_1,R_{j_1}, \dotsb,R_{j_k}\}$ is a basis of $V$. Since $\rho(m)R_j = R_j - \epsilon ([g_i^{-1}])R_1$ by (\ref{generatoraction}), under the chosen basis there is
$$
\rho(m)=
\begin{pmatrix}
1 & -\epsilon([g_{j_1}^{-1}]) & \dotsb &  -\epsilon([g_{j_k}^{-1}]) \\
 & 1 &  &  \\
 &  & \ddots &  \\
 &  &  & 1 
\end{pmatrix}.
$$
Then for any $\gamma \in \pi_K$, there is
\begin{equation}\label{coefficient}
\epsilon_\cF([\gamma]) = \textrm{tr}\big((\textrm{id}_V-\rho(m))\rho(\gamma)\big) = \sum_{\ast =1}^k\epsilon([g_{j_\ast}^{-1}])\rho(\gamma)_{\ast 1},
\end{equation}
where $\rho(\gamma)_{\ast 1}$ is the $(\ast+1, 1)$ entry of the matrix $\rho(\gamma)$ under the chosen basis (Note this unusual convention records entries in the first column starting from the second row).

\smallskip
Initial step. We need to check for $\gamma = m_t^{\pm1}$ for any meridian generator $m_t$. 

Consider $\gamma = m_t$ in the following cases. If $m_t =m$, it is straightforward to compute $\epsilon_\cF([m]) = 0$. Meanwhile by the meridian relation in Definition $\ref{AugDef}$, $\epsilon([m]) = \epsilon([me]) = \epsilon(\mu[e]) = 0$, which equals to $\epsilon_\cF([m])$. Next we assume $t \in I' =\{j_1,\dotsb, j_k\}$, then by (\ref{generatoraction}) there is
\begin{equation}\label{mtaction}
\rho(m_t)R_1 = R_1 - \epsilon([g_t g_1^{-1}])R_t = R_1 - \epsilon([g_t])R_t.
\end{equation}
We have $\rho(\gamma)_{\ast 1} = - \epsilon([g_t])$ when $j_\ast = t$, and $\rho(\gamma)_{\ast 1} = 0$ when $j_\ast \in I'\setminus \{t\}$. By equation (\ref{coefficient}) we have
$$\epsilon_\cF([m_t]) = -\epsilon ([g_t^{-1}])  \epsilon([g_t]) = - \epsilon([e]) + \epsilon([g_t^{-1}mg_t]) = \epsilon([m_t]).$$
Finally if $t\notin I'$, assume a linear combination
$R_t = cR_1 + \sum_{\ast = 1}^{k}c_\ast R_{j_\ast}.$
In particular, it implies $\epsilon([g_t^{-1}]) = \sum _{\ast =1}^k c_\ast \epsilon([g_{j_*}^{-1}])$ by considering the first entry. Applying equation (\ref{mtaction}) and using the linear combination, we get
\begin{equation}\label{generalmtaction}
\rho(m_t)R_1 = R_1 - \epsilon([g_t]) R_t = (1-c)R_1 -\sum_{\ast =1}^kc_\ast\epsilon([g_t])R_{j_\ast}.
\end{equation}
Then there is
$$\epsilon_{\cF}([m_t]) = -\sum_{\ast=1}^{k} \epsilon([g_{j_\ast}^{-1}]) c_\ast\epsilon([g_t]) = -\epsilon([g_t]) \epsilon\left(\sum_{\ast=1}^{k} c_\ast [g_{j_\ast}^{-1}]\right) = - \epsilon([g_t]) \epsilon ([g_t^{-1}]) = \epsilon([m_t]).$$
Here the first equality is due to equations (\ref{coefficient}) and (\ref{generalmtaction}), the second equality is because $\epsilon$ is linear with respect to the scalar multiplication, the third equality is because of $\epsilon([g_t^{-1}]) = \sum _{\ast =1}^k c_\ast \epsilon([g_{j_*}^{-1}])$ derived from the linear combination.

We have proven for $\gamma = m_t$ for any meridian generator $m_t$, and will argue for $\gamma = m_t^{-1}$. Observe that for any augmentation $\epsilon_0$ with $\epsilon_0(\mu) = 1$, there is $\epsilon_0([m\gamma]) = \epsilon_0([\gamma]) = \epsilon_0([m^{-1}\gamma])$ for any $\gamma\in \pi_K$ (by the meridian relation in Definition \ref{AugDef}). Consequently by the skein relations, there are 
$$\epsilon_0([m_t]) = \epsilon_0([g_t^{-1}m g_t]) = \epsilon_0([e])+ \epsilon_0([g_t^{-1}])\epsilon_0([g_t]) = \epsilon_0([g_t^{-1}])\epsilon_0([g_t]),$$
and
$$\epsilon_0([m_t^{-1}]) = \epsilon_0([g_t^{-1}m^{-1} g_t]) = \epsilon_0([e]) - \epsilon_0([g_t^{-1}])\epsilon_0([m^{-1}g_t]) = -\epsilon_0([g_t^{-1}])\epsilon_0([g_t]).$$
Therefore $\epsilon_0([m_t]) = -\epsilon_0([m_t^{-1}]) $. Further we have
$$\epsilon_\cF([m_t^{-1}]) = -\epsilon_\cF([m_t]) = -\epsilon([m_t]) = \epsilon([m_t^{-1}]),$$
completing the initial step.

Induction step. Suppose $\epsilon_\cF([\gamma]) = \epsilon([\gamma])$, we show that $\epsilon_\cF([m_t \gamma]) = \epsilon([m_t \gamma])$ for any generating meridian $m_t$. Compute
\begin{align*}
\rho(m_t\gamma)R_1 &= \epsilon([g_\alpha g_t^{-1}mg_t\gamma]) \\
	&= \epsilon([g_\alpha\gamma]) - \epsilon([g_\alpha g_t^{-1}])\epsilon([g_t \gamma])  \\ 
	&= \rho(\gamma)R_1 - \epsilon([g_t\gamma])R_t.
\end{align*}
Reorganizing the terms, we have
\begin{equation}\label{recursive}
(\rho(m_t\gamma)-\rho(\gamma))R_1 = -\epsilon([g_t\gamma])R_t.
\end{equation}
We continue the argument depending on whether $t\in I'$, as in the initial step. If so, by equations (\ref{coefficient}) and (\ref{recursive}), there is
$$\epsilon_\cF([m_t\gamma]) - \epsilon_\cF([\gamma]) = -\epsilon([g_t^{-1}])\epsilon([g_t\gamma]) = \epsilon([g_t^{-1}mg_t\gamma]) - \epsilon([\gamma]) = \epsilon([m_t\gamma]) - \epsilon([\gamma]).$$
Because $\epsilon_\cF([\gamma])=\epsilon([\gamma])$ by the induction hypothesis, there is $\epsilon_\cF([m_t\gamma])=\epsilon([m_t\gamma])$. If $t\notin I'$, we can again write down the linear combination, and then a similar argument proceeds.

Under the same induction hypothesis, it can be proven similarly that $\epsilon_\cF([m_t^{-1}\gamma]) = \epsilon([m_t^{-1}\gamma])$, except that the recursive formula (\ref{recursive}) appears slightly different:
$$(\rho([m_t^{-1}\gamma])-\rho(\gamma))R_1 = \epsilon([m^{-1}g_t\gamma])R_t.$$

We complete the induction argument, as well as the proof.
\end{proof}

Finally we present a uniqueness result.
\begin{prop}\label{uniqueness}
The irreducible unipotent KCH representation that induces a given augmentation $\epsilon$ with $\epsilon([e])= 0$ but $\epsilon([\gamma])\neq 0$ for some $\gamma\in \pi_K$, is unique up to isomorphism.
\end{prop}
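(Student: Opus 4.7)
The plan is to show that any irreducible unipotent KCH representation $(\rho, V)$ with $\epsilon_\cF = \epsilon$ (where $\cF = j_* \cE_u$ and $\cE_u$ is the local system determined by $\rho$) is isomorphic to the lifted representation $(\rho_\epsilon, V_\epsilon)$ constructed in (\ref{LiftRep}); this implies the desired uniqueness. By Proposition \ref{UniKCHLiftMatch} the lifted representation does satisfy $\epsilon_\cF = \epsilon$, so it suffices to build a non-zero $\pi_K$-equivariant map $\psi : V \to V_\epsilon$, which irreducibility then forces to be an isomorphism.

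Fix $v_1 \in V_1 = \textrm{im}(\rho(m) - Id_V)$, a non-zero vector (recall $V_1$ is one-dimensional by Lemma \ref{StdMatrix}). There is a unique linear functional $\alpha : V \to k$ with $(\rho(m) - Id_V) w = \alpha(w) v_1$ for all $w \in V$. Since $(Id_V - \rho(m))\rho(\gamma) : w \mapsto -\alpha(\rho(\gamma)w) v_1$ is rank $1$, a direct trace computation in a basis beginning with $v_1$ yields
$$\epsilon([\gamma]) = \epsilon_\cF([\gamma]) = tr\bigl((Id_V - \rho(m))\rho(\gamma)\bigr) = -\alpha(\rho(\gamma) v_1).$$
Define $\psi : V \to k^I$ by $\psi(w)_i := -\alpha(\rho(g_i) w)$. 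Setting $v_j := \rho(g_j^{-1}) v_1$, which spans $V_j$ since $m_j = g_j^{-1} m g_j$ gives $V_j = \rho(g_j^{-1}) V_1$, one computes
$$\psi(v_j)_i = -\alpha(\rho(g_i g_j^{-1}) v_1) = \epsilon([g_i g_j^{-1}]) = R_{ij},$$
so $\psi(v_j) = R_j$. By Lemma \ref{UniKCHirrsubrep} and irreducibility of $V$, the vectors $\{v_j\}_{j \in I}$ span $V$, hence $\psi$ maps $V$ into $V_\epsilon = \textrm{Span}_k\{R_j\}$ and in fact surjectively onto it.

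For $\pi_K$-equivariance, I compute on the spanning set:
$$\psi(\rho(\gamma) v_j)_i = -\alpha(\rho(g_i \gamma g_j^{-1}) v_1) = \epsilon([g_i \gamma g_j^{-1}]) = (\rho_\epsilon(\gamma) R_j)_i,$$
where the last equality uses (\ref{LiftRep}); linearity then extends equivariance to all of $V$. The kernel of $\psi$ is a proper $\pi_K$-subrepresentation (proper since $\psi \neq 0$), hence zero by irreducibility of $V$; thus $\psi$ is a $\pi_K$-equivariant isomorphism $V \cong V_\epsilon$. The main technical point, and the only one where a well-definedness concern could arise, is the intrinsic definition of $\psi$ via $\alpha$ and the $\rho(g_i)$'s: this avoids prescribing $\psi$ on the possibly redundant spanning set $\{v_j\}$, and the identity $\psi(v_j) = R_j$ then becomes a direct consequence of the matching $\epsilon_\cF = \epsilon$.
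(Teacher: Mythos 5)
Your proof is correct, and while it follows the same overall skeleton as the paper's --- produce a non-zero $\pi_K$-equivariant map from the given irreducible unipotent KCH representation onto the lifted one and let irreducibility force it to be an isomorphism --- the way you build the intertwiner is genuinely different and, in my view, more complete. The paper defines its map $\phi$ by prescribing $\phi(\tilde v_1)=R_1$ and then declaring $\phi(\tilde v_i)=R_i$ on the spanning set $\{\tilde v_i\}_{i\in I}$; since this set is in general linearly dependent, one must still check that every relation $\sum c_i\tilde v_i=0$ forces $\sum c_iR_i=0$, a point the paper passes over and which is precisely where the hypothesis that $(\tilde\rho,\tilde V)$ induces $\epsilon$ has to enter. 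Your intrinsic definition $\psi(w)_i=-\alpha(\rho(g_i)w)$, via the rank-one functional $\alpha$ and the trace identity $\epsilon([\gamma])=\epsilon_{\cF}([\gamma])=-\alpha(\rho(\gamma)v_1)$, is automatically linear and well defined, makes the use of the hypothesis $\epsilon_{\cF}=\epsilon$ explicit in the computation $\psi(v_j)=R_j$, and reduces equivariance to a one-line check on a spanning set (legitimate there, since both sides are already linear maps). What the paper's version buys is brevity; what yours buys is that the well-definedness of the intertwiner is no longer an unaddressed gap.
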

\begin{proof}
Suppose $(\tilde\rho, \tilde{V})$ is an irreducible unipotent KCH representation which induces the augmentation $\epsilon$. Let $(\rho, V)$ be the unipotent KCH representation lifted from $\epsilon$ as in (\ref{LiftRep}). By Proposition \ref{IrrUniKCHLift}, it is also irreducible. It suffices to prove that $(\tilde\rho, \tilde{V})$ and $(\rho, V)$ are isomorphic. 

By Lemma \ref{UniKCHirrsubrep}, $\tilde{V} = \textrm{Span}_k\{\tilde{V}_I\}$, where $\tilde{V}_i := \textrm{im}\, (\textrm{id}_{\tilde{V}} - \tilde{\rho}(m_i))$ for each $i\in I$. A similar statement holds for $(\rho, V)$, and in addition there is $V_i = \textrm{Span}_k\{R_i\}$ (see the proof of Proposition \ref{IrrUniKCHLift}, part (2)). Let $\tilde{v}_1\in \tilde{V}_1$ be a non-zero vector. We will construct a representation morphism $\phi: (\tilde{\rho}, \tilde{V})\rightarrow (\rho, V)$.

Define $\phi(\tilde{v}_1) = R_1$, we will check that it extends to a representation morphism. For any $t\in I$, there is $m_t = g_t^{-1}mg_t$. We define $\tilde{v}_t := \tilde{\rho}(g_t^{-1})\tilde{v}_1$, and it spans $\tilde{V}_t$ because
$$\tilde{V}_t = \textrm{im}\, (\textrm{id}_{\tilde{V}} - \tilde{\rho}(m_t)) = \textrm{im}\, (\tilde{\rho}(g_t^{-1})(\textrm{id}_{\tilde{V}} - \tilde{\rho}(m))\tilde{\rho}(g_t)) = \textrm{Span}_k \{\tilde{\rho}(g_t^{-1})\tilde{v}_1\} = \textrm{Span}_k \{\tilde{v}_t\}.$$ 
Using definition (\ref{LiftRep}), we can compute that
$$R_t = \epsilon([g_\alpha g_t^{-1}]) = \epsilon([g_\alpha g_t^{-1}g_1^{-1}])= \rho(g_t^{-1})R_1.$$
Comparing these calculations, if we set $\phi(\tilde{v}_t) = R_t$ for all $t\in I$, then $\phi$ respects the group actions on the two vector spaces spanned by $\{\tilde{v}_i\}_{i\in I}$ and $\{R_i\}_{i\in I}$. Since $\{\tilde{v}_i\}_{i\in I}$ (resp. $\{R_i\}_{i\in I}$) is a spanning set of $\tilde{V}$ (resp. $V$), $\phi$ extends to a representation morphism from $\tilde{V}$ to $V$.

Because both $(\tilde{\rho}, \tilde{V})$ and $(\rho, V)$ are irreducible representations, and because $\phi$ is not zero, there is a representation isomorphism $(\tilde{\rho}, \tilde{V})\cong(\rho, V)$. We prove the uniqueness.

\end{proof}

\subsection{The sheaf-augmentation correspondence}\label{Sec:SheafAug}
Now we are ready to present the relation between sheaves and augmentations. It also becomes evident how the KCH representations come into the picture.

Recall that $\cS$ is the set of isomorphism classes of objects in $Mod^s_{\Lambda_K}(X)$, and $\cM$ is the moduli set of simple sheaves up to local systems (Section \ref{Sec:Moduli}). Also recall that $\cA ug$ is the set of augmentations. 

Let $\cK ch$ be the set of isomorphism classes of KCH representations, where the isomorphism is the representation isomoprhism. Let $\cK ch^{irr}\subset \cK ch$ be the subset of irreducible KCH representations.

\begin{thm}\label{CornNgfactor}
(1) The map from KCH representations to augmentations (\ref{Cornwellsur}) factors through the following diagram
$$\cK ch \hookrightarrow \cS \twoheadrightarrow \cA ug.$$

(2) It further induces the following diagram
$$\cK ch^{irr} \hookrightarrow \cM \xrightarrow{\sim} \cA ug.$$

Moreover, the isomorphism $\cM\cong \cA ug$ is summarized in the following table.
\smallskip
\begin{center}
\begin{tabular}{| r | l |}
\hline
\rule{0pt}{2.3ex}
$\cM$ & $\cA ug$ \\[0.045cm]
\hline
\rule{0pt}{2.3ex}
$\cE$ irreducible KCH, $\cF = j_*\cE$ &  $\epsilon([e])\neq 0$ \\
\; \, $\cE_u$ irreducible unipotent KCH, $\cF = j_*\cE_u$ & $\epsilon([e])= 0$ but $\epsilon([\gamma])\neq 0$ for some $\gamma\in \pi_K$ \;\\
$\cG_\alpha$ rank $1$ on the knot, $\cF = i_*\cG_\alpha[-1]$ & $\epsilon([\gamma])= 0$ for all $\gamma\in \pi_K$ \\[0.045cm]
\hline
\end{tabular}
\end{center}
\end{thm}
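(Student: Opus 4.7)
The plan is to establish the two statements using the classification from Proposition \ref{Sheafuptoloc} together with the lifting results of Sections \ref{Sec:KCHAug} and \ref{Sec:UniKCHAug}.

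For part (1), the inclusion $\cK ch \hookrightarrow \cS$ sends a KCH representation $(\rho,V)$ to the simple sheaf $j_*\cE$, where $\cE$ is the local system corresponding to $\rho$ via (\ref{rep-loc}); Lemma \ref{pushforwardKCH} guarantees simpleness, and $j^{-1}j_*\cE \cong \cE$ gives injectivity. The surjection $\cS \twoheadrightarrow \cA ug$ is the map $\cF \mapsto \epsilon_\cF$ of Theorem \ref{sheaftoaug}, with surjectivity being a consequence of part (2). Commutativity of the factorization (i.e.\ that the composite equals Ng--Cornwell's map \eqref{Cornwellsur}) is exactly the content of Proposition \ref{factorthrough}.

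For part (2), I would proceed case by case along the three types of representatives listed in Proposition \ref{Sheafuptoloc}. First, for $\cF = j_*\cE$ with $\cE$ an irreducible KCH local system, Proposition \ref{factorthrough} gives $\epsilon_\cF = \epsilon_\rho$, so in particular $\epsilon_\cF(\mu) = \mu_0 \neq 1$, i.e.\ $\epsilon_\cF([e]) \neq 0$; Cornwell's theorem from Section \ref{Sec:KCHAug} then yields the desired bijection with augmentations satisfying $\epsilon([e])\neq 0$. Second, for $\cF = j_*\cE_u$ with $\cE_u$ an irreducible unipotent KCH local system, the meridian acts as $I_n + E_{12}$ so $\epsilon_\cF([e]) = 0$, while irreducibility forces $\epsilon_\cF([\gamma])\neq 0$ for some $\gamma$ (else the first coordinate axis would be a proper subrepresentation under the normal form of Lemma \ref{StdMatrix}); Propositions \ref{IrrUniKCHLift}, \ref{UniKCHLiftMatch}, and \ref{uniqueness} then combine to give bijectivity in this row. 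Third, for $\cF = i_*\cG_\alpha$ we have $V = 0$, so $\epsilon_\cF([\gamma])=0$ for every $\gamma$, while $\epsilon_\cF(\lambda) = -\alpha$ sweeps out $k^*$ as $\alpha$ does; conversely, the cord algebra relations force any augmentation vanishing on all $[\gamma]$ to be determined by $\epsilon(\lambda)\in k^*$ alone (since $\epsilon(\mu)=1$ follows from the normalization and the skein/longitude relations reduce trivially), supplying the bijection.

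The three conditions in the right column of the table form a disjoint trichotomy exhausting all of $\cA ug$, so assembling the three cases yields the desired bijection $\cM \xrightarrow{\sim} \cA ug$; the inclusion $\cK ch^{irr}\hookrightarrow \cM$ is the restriction of the map from part (1), and its image in $\cA ug$ is precisely the top row. I expect the main obstacle to be the middle row: verifying that the lift \eqref{LiftRep} is genuinely an irreducible unipotent KCH representation reproducing the original augmentation, and that it is unique up to isomorphism. This is, however, already the combined content of Propositions \ref{IrrUniKCHLift}, \ref{UniKCHLiftMatch}, and \ref{uniqueness}, so the remaining task is just a clean assembly of these pieces, together with a verification that the sign subtlety of Remark \ref{modulisheaftoaugrmk} is harmless on the chosen canonical representatives.
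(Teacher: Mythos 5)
Your proposal is correct and follows essentially the same route as the paper: part (1) via Lemma \ref{pushforwardKCH}, Theorem \ref{sheaftoaug} and Proposition \ref{factorthrough}, and part (2) by running through the three representatives of Proposition \ref{Sheafuptoloc}, citing Cornwell for the first row, Propositions \ref{IrrUniKCHLift}, \ref{UniKCHLiftMatch} and \ref{uniqueness} for the second, and the direct $\epsilon(\lambda)\leftrightarrow\alpha$ computation for the third. One small caveat: your parenthetical claim that $\epsilon_\cF([\gamma])=0$ for all $\gamma$ would make the first coordinate axis a subrepresentation only works when $\dim V=2$, since the hypothesis only gives $\rho(\gamma)_{21}=0$ rather than $\rho(\gamma)_{i1}=0$ for all $i\geq 2$; the correct argument is that the spanning vectors $\rho(g_i^{-1})v_1$ of the irreducible representation would then all lie in the proper hyperplane $\{v_2=0\}$, a contradiction.
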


\begin{proof}
(1) The first map follows from the classification Theorem \ref{Sheafclassify}, as well as the injectivity. The second map follows from Theorem \ref{sheaftoaug}. The composition giving (\ref{Cornwellsur}) is a consequence of Proposition \ref{factorthrough}.

It remains to show that the second map is surjective. There are three possibilities:
\begin{itemize}
\item
If an augmentation $\epsilon$ satisfies $\epsilon([e]) \neq 0$ (or equivalently $\epsilon(\mu) \neq 1$), then it arises from a KCH representations by \cite[Theorem 1.2]{Cor13b}. Suppose $\cE$ is the corresponding local system, set $\cF = j_*\cE$.
\item
If $\epsilon([e]) = 0$ but $\epsilon([\gamma])\neq 0$ for some $\gamma\in \pi_K$, then it lifts to a unipotent KCH representation by Proposition \ref{IrrUniKCHLift} and Proposition \ref{UniKCHLiftMatch}. Suppose $\cE_u$ is the corresponding local system, set $\cF = j_*\cE_u$.
\item
If $\epsilon([\gamma])=0$ for all $\gamma\in \pi_K$. In this case $\epsilon(\mu)=\epsilon([e])+1 =1$. Therefore the augmentation only depends on $\epsilon(\lambda)$, where $\lambda$ is the class of the longitude. Then set $\cF = i_*\cG_\alpha[-1]$, where $\cG_\alpha$ is a rank $1$ local system whose monodromy is $\alpha = \epsilon(\lambda)$. \end{itemize} 
Comparing with the classification Theorem \ref{Sheafclassify}, we prove the surjectivity.

(2) Classes in $\cM$ are listed in Proposition \ref{Sheafuptoloc}. The uniqueness of the lifted KCH representation is proven in \cite[Theorem 1.2]{Cor13b}. The uniqueness of the lifted unipotent KCH representation is proven in Proposition \ref{uniqueness}. The uniqueness in the third case comes from the bijection between $\epsilon(\lambda)$ and $\alpha$.
\end{proof}

\begin{rmk}
Even though shifting the homological degree of a simple sheaf does not change the associated augmentation, we still make a degree shift in the third case so that $cone(T^\bullet)$ in all three cases are consistent.
\end{rmk}

\subsection{Augmentation polynomial}\label{Sec:AugPoly}

In this section we take $k= \bbC$. The augmentation polynomial is also a knot invariant. We first introduce the \textit{augmentation variety} \cite{Ng5}:
$$V_K = \{(\epsilon(\lambda),\epsilon(\mu))\in (\bbC^*)^2 \,|\, \epsilon \textrm{ is an augmentation} \}.$$
When the maximal-dimension part of the Zariski closure of $V_K$ is a codimension $1$ subvariety of $(\bbC^*)^2$, this variety is the vanishing set of a reduced polynomial (no repeated factor) $Aug_K(\lambda^{\pm 1},\mu^{\pm1})$, the augmentation polynomial of $K$. We can choose $Aug_K(\lambda,\mu)\in \bbZ[\lambda,\mu]$ with coprime coefficients, which is then well-defined up to an overall sign.

Recall from (\ref{extension}) that $\cF\in Mod^s_{\Lambda_K}(X)$ can be represented by a short exact sequence:
\begin{equation}\label{Ext}
0\rightarrow j_!\cH \rightarrow \cF \rightarrow i_*\cG \rightarrow 0,
\end{equation}
where $\cH \in loc (X\setminus K)$ and $\cG \in loc(K)$, determining a class in $\textrm{Ext}^1_X(i_*\cG, j_!\cH)$.

Let $\cM_0\subset \cM$ be the subset consisting representatives that induce trivial extension classes in $\textrm{Ext}^1_X(i_*\cG, j_!\cH)$.

\begin{prop}
If $\cF\in \cM_0$, then the induced augmentation $\epsilon_\cF$ determines a point in $\{(1-\lambda)(1- \mu)=0\}\subset (\bbC^*)^2$. Moreover, the map from $\cM_0$ to $\{(1-\lambda)(1- \mu)=0\}\subset (\bbC^*)^2$ is bijective.

\end{prop}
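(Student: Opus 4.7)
The plan is to apply the classification of $\cM$ from Proposition~\ref{Sheafuptoloc} to the three families of representatives and, for each, determine the class of the canonical short exact sequence $0\to j_!\cH\to\cF\to i_*\cG\to 0$ via Lemma~\ref{ext}. Recall that $i^{-1}Rj_*\cH$ is represented by the two-term complex $V\xrightarrow{1-\rho(m)} V$ concentrated in degrees $0$ and $1$, as in the proof of Lemma~\ref{SheafRep}. Since this complex vanishes in negative degrees, a chain map from the degree-$0$ local system $\cG$ admits no non-trivial null-homotopy, so the extension class is zero precisely when the restriction map $T:W\to V$ of Lemma~\ref{SheafRep} is zero.

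\textbf{Next,} I would determine $\cM_0$ case by case. For $\cF=i_*\cG_\alpha$ one has $\cH=j^{-1}\cF=0$, so $Ext^1_X(i_*\cG_\alpha,0)=0$ and this family always lies in $\cM_0$. For $\cF=j_*\cE$ with $\cE$ an irreducible KCH local system, Lemma~\ref{pushforwardKCH} identifies $T$ with the inclusion $V^{\langle m\rangle}\hookrightarrow V$ of the codimension-one meridian-invariant subspace, which vanishes if and only if $\dim V=1$. For $\cF=j_*\cE_u$ with $\cE_u$ irreducible unipotent KCH, one has $\rho(m)=I_n+E_{12}$ in a suitable basis with $n\ge 2$, so $V^{\langle m\rangle}$ has dimension $n-1\ge 1$, forcing $T\neq 0$. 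Thus $\cM_0$ consists of the rank-one local systems $i_*\cG_\alpha$ on $K$ together with $j_*\cE$ for one-dimensional KCH local systems~$\cE$.

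\textbf{Then} I would compute the associated augmentations. For $\cF=i_*\cG_\alpha$ the proof of Theorem~\ref{sheaftoaug} gives $(\epsilon_\cF(\lambda),\epsilon_\cF(\mu))=(-\alpha,1)$, providing a bijection from $\bbC^*$ onto $\{\mu=1\}\cap(\bbC^*)^2$. For a one-dimensional KCH representation $\rho:\pi_K\to\bbC^*$, the map $\rho$ factors through the abelianization $H_1(X\setminus K)=\bbZ\langle m\rangle$, so $\rho(\ell)=1$ since $\ell$ is null-homologous in $X\setminus K$ (it cobounds the punctured Seifert surface). Using Proposition~\ref{factorthrough} this yields $(\epsilon_\cF(\lambda),\epsilon_\cF(\mu))=(1,\mu_0)$ with $\mu_0\in\bbC^*\setminus\{1\}$, giving a bijection onto $\{\lambda=1\}\cap(\bbC^*)^2\setminus\{(1,1)\}$. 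The two images are disjoint, and their union is exactly $\{(1-\lambda)(1-\mu)=0\}\cap(\bbC^*)^2$; the point $(1,1)$ is hit only by $i_*\cG_{-1}$.

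\textbf{The main subtlety} I anticipate is the identification of the vanishing of the extension class with the vanishing of the restriction map~$T$. This relies on the one-sided placement of the complex $V\xrightarrow{1-\rho(m)} V$ in non-negative degrees, so it should not require additional work once the diagram in the proof of Lemma~\ref{SheafRep} is reinterpreted. The remaining steps are direct applications of the classification together with the augmentation formulas already established.
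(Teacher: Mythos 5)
Your proof is correct and takes essentially the same route as the paper: both identify triviality of the extension class with the vanishing of the restriction map $T:W\to V$, deduce that $\cM_0$ consists exactly of the rank-one local systems on $K$ and the one-dimensional KCH local systems, and then compute $(\epsilon_\cF(\lambda),\epsilon_\cF(\mu))=(-\alpha,1)$ and $(1,\mu_0)$ respectively to get the bijection onto $\{(1-\lambda)(1-\mu)=0\}$. The only (cosmetic) difference is that the paper derives $(\dim V,\dim W)\in\{(1,0),(0,1)\}$ directly from simpleness of $cone(T)$ with $T=0$, whereas you check the three families of Proposition \ref{Sheafuptoloc} one by one.
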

\begin{proof}

An augmentation $\epsilon$ which determines a point in $\{(1-\lambda)(1- \mu)=0\}\subset (\bbC^*)^2$ satisfies either $\epsilon(\lambda) =1$ or $\epsilon(\mu)=1$.

The extension (\ref{Ext}) being trivial means $\cF \cong j_!\cH \oplus i_*\cG$. Hence the linear transform $T: W\rightarrow V$ defined in Lemma \ref{SheafRep} is a zero map. The simpleness further requires either $V =\bbC, W=0$ or $W=\bbC, V=0$. 

If $V= \bbC, W=0$, then $\rho:\pi_K\rightarrow GL(V)$ is a one dimensional representation. Since an irreducible KCH representation has to be at least dimension $2$. Therefore by Proposition \ref{Sheafuptoloc}, $\cF = j_*\cE = j_!\cE$, where $\cE$ is a rank $1$ KCH local system. The underlining representation is abelian, and hence factors through $H_1(X\setminus K) =\bbZ$. Therefore $\epsilon_\cF(\mu) = \mu_0$, $\epsilon_\cF(\lambda) = 1$ and $\epsilon_\cF([\gamma]) = (1-\mu_0)\mu_0^{\textrm{lk}(K, \gamma)}$ for $\gamma\in \pi_K$, where $\textrm{lk}$ is the linking number. This case gives $\{\mu\neq 1, \lambda=1\}\subset (\bbC^*)^2$.

If $W =\bbC, V=0$, then $\cF = i_*\cG_\alpha$ for a rank $1$ local system $\cG_\alpha$ supported on the knot, according to Proposition \ref{Sheafuptoloc}. In this case $\epsilon_\cF(\mu) = 1$, $\epsilon_\cF(\lambda) = -\alpha$, and  $\epsilon_\cF([\gamma])=0$ for all $\gamma\in \pi_K$. This case gives $\{\mu = 1\}\subset (\bbC^*)^2$.

Overall, the map from $\cM_0$ to $\{(1-\lambda)(1-\mu)=0\}\subset (\bbC^*)^2$ is bijective.
\end{proof}

We reprove the following result in \cite{Ng3}.

\begin{cor}\label{universalfactor} For any knot $K$,  $(1-\lambda)(1-\mu) \,|\, Aug_K(\lambda, \mu).$
\end{cor}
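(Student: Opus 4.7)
The plan is to deduce the divisibility directly from the proposition we just established, which exhibits an explicit bijection between $\cM_0$ and the curve $\{(1-\lambda)(1-\mu)=0\}\subset(\bbC^*)^2$. The key point is that every element of $\cM_0$ arises from a genuine simple sheaf, hence gives an honest augmentation, so $V_K$ must contain the entire locus $\{(1-\lambda)(1-\mu)=0\}\cap(\bbC^*)^2$.

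First I would unwind the definitions: recall that $V_K\subset(\bbC^*)^2$ is the set of pairs $(\epsilon(\lambda),\epsilon(\mu))$ over all augmentations $\epsilon$, and that $Aug_K(\lambda,\mu)$ is the reduced defining polynomial of the maximal-dimension part of the Zariski closure $\overline{V_K}$. From the bijective description of $\cM_0$ in the previous proposition, every point of the affine curve $\{\mu\neq 1,\lambda=1\}\cup\{\mu=1\}$ inside $(\bbC^*)^2$ is realized by the augmentation $\epsilon_\cF$ of some simple sheaf $\cF\in\cM_0\subset\cM$, and hence by Theorem \ref{CornNgfactor} lies in $V_K$. Consequently $\{(1-\lambda)(1-\mu)=0\}\cap(\bbC^*)^2\subset V_K$.

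Next I would argue the divisibility factor by factor. The two curves $\{\lambda=1\}\cap(\bbC^*)^2$ and $\{\mu=1\}\cap(\bbC^*)^2$ are irreducible of dimension one, and each is contained in $V_K\subset\overline{V_K}$. Since the maximal-dimension part of $\overline{V_K}$ is already one-dimensional and cut out by the reduced polynomial $Aug_K$, these two curves are contained in the vanishing set of $Aug_K$. By the Nullstellensatz applied to the coordinate ring $\bbC[\lambda^{\pm 1},\mu^{\pm 1}]$, the irreducible polynomials $1-\lambda$ and $1-\mu$ must each divide $Aug_K$. Because they are coprime in $\bbZ[\lambda,\mu]$, their product $(1-\lambda)(1-\mu)$ divides $Aug_K(\lambda,\mu)$, which is the claim.

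The main subtlety, and essentially the only thing to watch, is the passage from pointwise containment in $V_K$ to polynomial divisibility. One must check that the augmentation polynomial is genuinely a defining polynomial for the Zariski closure of $V_K$ in the codimension-one part, so that vanishing on an infinite (hence Zariski dense in each irreducible component) subset of an irreducible curve forces that curve's defining polynomial to appear as a factor; this is exactly the setup in \cite{Ng5}. The reducedness hypothesis on $Aug_K$ ensures no multiplicity issues arise, and the coprimality of $1-\lambda$ and $1-\mu$ lets us combine the two individual divisibilities into one.
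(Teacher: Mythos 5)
Your argument is correct and is exactly the deduction the paper intends: the corollary is stated without proof as an immediate consequence of the preceding proposition, and your write-up simply makes explicit the steps (surjectivity of $\cM_0\to\{(1-\lambda)(1-\mu)=0\}$ giving containment in $V_K$, then Zariski closure, reducedness of $Aug_K$, and coprimality of the two factors) that the paper leaves implicit. No gaps; the only hypothesis you correctly carry along is that $Aug_K$ is defined at all, i.e.\ that the maximal-dimension part of $\overline{V_K}$ has codimension one.
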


\begin{rmk}
We interpret the augmentations in $(1-\lambda)(1-\mu)$ as sheaves coming from trivial extensions. From the sheaf perspective, the extension class $Ext^1_X(i_*\cG, j_!\cH)$ depends only on $\cG$, and $\cH$ restricted to a neighborhood of the knot. Since the neighborhood of any knot is the same as that of the unknot, the augmentation polynomial should be divisible by $(1-\lambda)(1-\mu)$, the augmentation polynomial of the unknot.
\end{rmk}

\end{document}